\newcommand{\tmvolxx}{xx}
\newcommand{\tmyearyyyy}{yyyy}
\newcommand{\FirstPageHead}[3]{
{\footnotesize 
\vskip -8mm 
\centerline {Travaux math\'ematiques, \quad 
Volume #1 (#2), 
#3,\quad \copyright\  Universit\'e du Luxembourg}}\vspace{-3mm}}
\tikzset{
	arrowMe/.style={
		postaction=decorate,
		decoration={
			markings,
			mark=at position .5 with {\arrow[thick]{#1}}
		}
	}
}
\DeclareMathOperator{\bq}{b}
\DeclareMathOperator{\Res}{Res}
\DeclareMathOperator{\vol}{vol}
\DeclareMathOperator{\Li}{{Li}}
\numberwithin{equation}{section}
\numberwithin{equation}{section}
\begin{document}
\thispagestyle{empty}
\FirstPageHead{\tmvolxx}{\tmyearyyyy}{\pageref{firstpage}--\pageref{lastpage}}
\label{firstpage}
\newcommand{\relmiddle}[1]{\mathrel{}\middle#1\mathrel{}}
\newcommand{\ul}{\underline}
\newcommand{\eps}{\epsilon}
\newcommand{\MOD}{\mathrm{mod}}
\newcommand{\pil}{\vec}
\newcommand{\word}[1]{#1\index{#1}}
\newcommand{\emphword}[1]{\emph{#1}\index{#1}}
\newcommand{\abs}[1]{\lvert #1 \rvert}
\newcommand{\mgd}[1]{\left\{ #1 \right\}}
\newcommand{\pa}[1]{\left( #1 \right)}
\newcommand{\tpa}[1]{\left\{ #1 \right\}}
\newcommand{\Abs}[1]{\lVert #1 \rVert}
\newcommand{\braket}[2]{\langle #1 | #2 \rangle}
\newcommand{\knot}[1]{\langle #1 \rangle_N}
\newcommand{\pbraket}[2]{( #1 | #2 )}
\newcommand{\bra}[1]{\langle #1 |}
\newcommand{\ket}[1]{| #1 \rangle}
\newcommand{\ps}[3]{{\psi}_{#1,#2}(#3)}
\newcommand{\Spinc}{\text{Spin}^c}
\newcommand{\frakg}{\mathfrak{g}}
\newcommand{\g}{\frakg}
\newcommand{\frakX}{\mathfrak{X}}
\newcommand{\frakp}{\mathfrak{p}}
\newcommand{\h}{\mathfrak{h}}
\newcommand{\fraksl}{\mathfrak{sl}}
\newcommand{\frako}{\mathfrak{o}}
\newcommand{\fraksp}{\mathfrak{sp}}
\newcommand{\gtilde}{\tilde{\frakg}}
\newcommand{\frakh}{\mathfrak{h}}
\newcommand{\fraks}{\mathfrak{s}}
\newcommand{\frakm}{\mathfrak{m}}
\newcommand{\fraku}{\mathfrak{u}}
\newcommand{\gl}{\mathfrak{gl}}
\newcommand{\bbA}{\mathbb{A}}
\newcommand{\bbC}{\mathbb{C}}
\newcommand{\bbE}{\mathbb{E}}
\newcommand{\bbF}{\mathbb{F}}
\newcommand{\bbO}{\mathbb{O}}
\newcommand{\bbZ}{\mathbb{Z}}
\newcommand{\bbN}{\mathbb{N}}
\newcommand{\bbQ}{\mathbb{Q}}
\newcommand{\bbR}{\mathbb{R}}
\newcommand{\bbP}{\mathbb{P}}
\newcommand{\bbH}{\mathbb{H}}

\newcommand{\setZ}{\mathbb{Z}}
\newcommand{\setC}{\mathbb{C}}
\newcommand{\Exp}{\mathrm{Exp}}
\newcommand{\inv}{\varpi}
\newcommand{\del}[1]{\frac{\partial}{\partial {#1}}}
\newcommand{\pd}[2]{\frac{\partial#1}{\partial#2}}
\newcommand{\grad}{\nabla}
\newcommand{\Nabla}{\nabla}
\newcommand{\SW}{\text{SW}}
\newcommand{\ind}{\text{ind}}
\newcommand{\even}{\text{even}}
\newcommand{\frakodd}{\text{odd}}
\newcommand{\Cl}{\text{Cl}}
\newcommand{\cl}{\text{cl}}
\newcommand{\Vol}{\mathrm{Vol}}
\newcommand{\Maps}{\text{Maps}}
\newcommand{\calA}{\mathcal{A}}
\newcommand{\calB}{\mathcal{B}}
\newcommand{\calC}{\mathcal{C}}
\newcommand{\calD}{\mathcal{D}}
\newcommand{\calE}{\mathcal{E}}
\newcommand{\calF}{\mathcal{F}}
\newcommand{\calG}{\mathcal{G}}
\newcommand{\calH}{\mathcal{H}}
\newcommand{\calI}{\mathcal{I}}
\newcommand{\calJ}{\mathcal{J}}
\newcommand{\calL}{\mathcal{L}}
\newcommand{\calM}{\mathcal{M}}
\newcommand{\calN}{\mathcal{N}}
\newcommand{\calP}{\mathcal{P}}
\newcommand{\calS}{\mathcal{S}}
\newcommand{\calT}{\mathcal{T}}
\newcommand{\calU}{\mathcal{U}}
\newcommand{\calV}{\mathcal{V}}
\newcommand{\calX}{\mathcal{X}}
\newcommand{\calZ}{\mathcal{Z}}
\newcommand{\calHom}{\mathcal{H}\mathrm{om}}
\newcommand{\frakt}{\mathfrak{t}}
\newcommand{\so}{\mathfrak{so}}
\newcommand{\su}{\mathfrak{su}}
\newcommand{\SU}{\mathrm{SU}}
\newcommand{\SO}{\mathrm{SO}}
\newcommand{\SL}{\mathrm{SL}}
\newcommand{\PSL}{\mathrm{PSL}}
\newcommand{\spin}{\mathfrak{spin}}
\newcommand{\spinc}{\mathfrak{spin}^c}
\newcommand{\calO}{\mathcal{O}}
\newcommand{\PD}{\text{PD}}
\newcommand{\End}{\text{End}}
\newcommand{\intmult}{\invneg}
\newcommand{\acts}{\circlearrowright}
\newcommand{\bnabla}{\boldsymbol{\nabla}}
\newcommand{\Spin}{\text{Spin}}
\newcommand{\dvol}{d\text{vol}}
\newcommand{\isom}{\cong}
\newcommand{\dbar}{\overline{\partial}}
\newcommand{\dd}{\partial}
\newcommand{\ddx}{\frac{\dd}{\dd x}}
\newcommand{\ddy}{\frac{\dd}{\dd y}}
\newcommand{\ddxi}{\frac{\dd}{\dd x_i}}
\newcommand{\ddxj}{\frac{\dd}{\dd x_j}}
\newcommand{\ddyi}{\frac{\dd}{\dd y_i}}
\newcommand{\unit}{\mathbf{1}}
\newcommand{\bbet}{\unit}
\newcommand{\xbar}{\bar{x}}
\newcommand{\transverse}{\pitchfork}
\newcommand{\tv}{\transverse}
\newcommand{\Map}{\mathrm{Map}}
\newcommand{\ch}{\mathrm{ch}}
\newcommand{\td}{\mathrm{td}}

\newcommand{\YM}{\text{YM}}

\newcommand{\ddz}{\frac{d}{dz}}

\def\Xint#1{\mathchoice
	{\XXint\displaystyle\textstyle{#1}}%
	{\XXint\textstyle\scriptstyle{#1}}%
	{\XXint\scriptstyle\scriptscriptstyle{#1}}%
	{\XXint\scriptscriptstyle\scriptscriptstyle{#1}}%
	\!\int}
\def\XXint#1#2#3{{\setbox0=\hbox{$#1{#2#3}{\int}$}
		\vcenter{\hbox{$#2#3$}}\kern-.5\wd0}}
\def\doubleint{\Xint=}
\def\dashint{\Xint-}
\newcommand{\circint}{\frakoint}
\newcommand{\pt}{\mathrm{pt}}
\newcommand{\Morse}{\text{Morse}}
\newcommand{\cupp}{\smallsmile}
\newcommand{\id}{\mathrm{id}}
\newcommand{\semidirect}{\rtimes}
\newcommand{\Der}{\mathrm{Der}}
\newcommand{\Vir}{\mathrm{Vir}}
\newcommand{\Aut}{\mathrm{Aut}}
\newcommand{\Hom}{\mathrm{Hom}}
\newcommand{\Ext}{\mathrm{Ext}}
\newcommand{\MCG}{\mathrm{MCG}}
\newcommand{\gr}{\mathrm{gr}}
\newcommand{\deter}{\mathrm{det}}
\newcommand{\Ker}{\text{Ker}}
\newcommand{\cs}{\text{cs}}
\newcommand{\Span}{\mathrm{span}}
\newcommand{\im}{\text{im}}
\newcommand{\Fr}{\text{Fr}}
\newcommand{\inter}{\text{int }}
\newcommand{\Vect}{\text{Vect}}
\newcommand{\Conn}{\text{Conn}}
\newcommand{\dAdt}{\frac{\dd A}{\dd t}}
\newcommand{\ddt}{\frac{\dd}{\dd t}}
\newcommand{\LHS}{\mathrm{LHS}}
\newcommand{\RHS}{\mathrm{RHS}}
\newcommand{\tpsi}[3]{\tilde{\psi}'_{#1,#2}(#3)}
\newtheorem{thm}{Theorem}[section]
\newtheorem{theorem}[thm]{Theorem}
\newtheorem{prop}[thm]{Proposition}
\newtheorem{cor}[thm]{Corollary}
\newtheorem{conj}[thm]{Conjecture}
\newtheorem{conjec}[thm]{Conjecture}
\newtheorem{lem}[thm]{Lemma}
\newtheorem{lemma}[thm]{Lemma}
\newtheorem{claim}[thm]{Claim}

\newtheorem{proposition}[thm]{Proposition}

\newtheorem{corollary}[thm]{Corollary}

\newtheorem{conjecture}[thm]{Conjecture}

\theoremstyle{definition}
\newtheorem{remark}[thm]{Remark}
\newtheorem{rem}[thm]{Remark}
\newtheorem{exer}[thm]{Exercise}
\newtheorem{definition}[thm]{Definition}
\newtheorem{defn}[thm]{Definition}
\newtheorem{propdef}[thm]{Proposition/Definition}
\newtheorem{example}[thm]{Example}





\newcommand{\eproof}{\begin{flushright} $\square$ \end{flushright}}

\let\eproof\endproof 

\newcommand{\QED}{\begin{flushright} QED \end{flushright}}
\newcommand{\ex}{\paragraph{{\bf Example.}}}

\newcommand\zkn[1][k]{Z_{#1}^{(n)}}
\newcommand\zk[1][k]{Z_{#1}^{(2)}}
\newcommand{\spane}{\mathop{\fam0 Span}\nolimits}
\newcommand{\re}{\mathop{\fam0 Re}\nolimits}
\newcommand{\Sign}{\mathop{\fam0 Sign}\nolimits}
\newcommand{\expo}{\exp}\newcommand{\cO}{{\mathcal O}}\newcommand{\E}{ {\mathcal E}}
\newcommand{\Sk}{{\mathcal S}}\newcommand{\cZ}{ {\mathcal Z}}
\newcommand{\cV}{\mathop{\fam0 {\tilde {\mathcal V}}}\nolimits}
\newcommand{\tOmega}{\mathop{\fam0 {{{\tilde \Omega}}}}\nolimits}
\newcommand{\bH}{\mathop{\fam0 {{\mathbb H}}}\nolimits}
\newcommand{\tPhi}{\mathop{\fam0 {{{\tilde \Phi}}}}\nolimits}
\newcommand{\tcH}{\mathop{\fam0 {\tilde {\mathcal H}}}\nolimits}
\newcommand{\cD}{\mathop{\fam0 {\mathcal D}}\nolimits}
\newcommand{\indext}{\mathop{\fam0 Index}\nolimits}
\newcommand{\reg}{\mathop{\fam0 reg}\nolimits}
\newcommand{\irr}{\mathop{\fam0 irr}\nolimits}
\newcommand{\ab}{\mathop{\fam0 ab}\nolimits}
\newcommand{\sing}{\mathop{\fam0 sing}\nolimits}
\newcommand{\sign}{\mathop{\fam0 sign}\nolimits}
\newcommand{\alg}{\mathop{\fam0 alg}\nolimits}
\newcommand{\topo}{\mathop{\fam0 top}\nolimits}
\newcommand{\eq}{\mathop{\fam0 eq}\nolimits}
\newcommand{\Td}{\mathop{\fam0 Td}\nolimits}
\newcommand{\Ch}{\mathop{\fam0 Ch}\nolimits}
\newcommand{\R}{{\mathbb R}}
\newcommand{\tR}{{\tilde \R}}
\newcommand{\bC}{{\mathbb C}}
\newcommand{\bR}{{\mathbb R}}
\newcommand{\bT}{{\bar T}}
\newcommand{\C}{\mathbb C}
\newcommand{\A}{{\mathcal A}}
\newcommand{\X}{{\mathcal X}}
\newcommand{\Z}{{\mathbb Z}}
\newcommand{\bZ}{\Z{}}
\newcommand{\f}{{\tilde f}}
\newcommand{\oc}{{\overline c}}
\newcommand{\D}{{\mathcal D}}
\newcommand{\ra}{\mathop{\fam0 \rightarrow}\nolimits}
\newcommand{\CP}{\mathop{\fam0 {\mathbb CP}}\nolimits}
\newcommand{\ipl}{\mathop{\fam0 \langle}\nolimits}
\newcommand{\ipr}{\mathop{\fam0 \rangle}\nolimits}
\newcommand{\sct}{\mathop{\fam0 \Gamma}\nolimits}
\newcommand{\G}{ {\mathcal G}}
\newcommand{\M}{{\mathcal M}}
\newcommand{\ct}{ \llcorner}
\newcommand{\cR}{ {\mathcal R}}
\newcommand{\cA}{ {\mathcal A}}
\newcommand{\cH}{ {\mathcal H}}
\newcommand{\cM}{ {\mathcal M}}
\newcommand{\T}{ {\mathcal T}}
\newcommand{\fg}{ {\mathfrak g}}
\newcommand{\V}{ {\mathcal V}}\newcommand{\N}{ {\mathbb N}}
\newcommand{\tL}{ {\tilde {\mathcal L}}}
\newcommand{\tY}{ {\tilde Y}}
\newcommand{\Top}{ \hat{\text{T}}}
\newcommand{\pop}{ \hat{\text{p}}}
\newcommand{\qop}{ \hat{\text{q}}}
\newcommand{\tD}{ {\tilde D}}
\newcommand{\tG}{ {\tilde G}}
\newcommand{\tW}{ {\tilde W}}
\newcommand{\s}{\sigma}
\newcommand{\tcV}{\tilde \cV}
\newcommand{\bN}{{\mathbf N}}
\newcommand{\trho}{{\tilde \rho}}
\newcommand{\Nablat}{{\mathbf {\hat \nabla}}^{t}}
\newcommand{\Nablae}{{\mathbf {\hat \nabla}}^e}
\newcommand{\Nablaet}{{\mathbf {\hat \nabla}}^{e,t}}
\newcommand{\BTstar}{\star^{\text{\tiny BT}}}
\newcommand{\tBTstar}{{\tilde \star}^{\text{\tiny BT}}}
\newcommand{\mom}{\mathbf p}
\newcommand{\pos}{\mathbf q }
\newcommand{\sfu}{\mathbf{u}}
\newcommand{\sfv}{\mathbf{v}}
\newcommand{\wv}{{\mathbf{w}}}
\newcommand{\pto}{\mathbf{T}}
\newcommand{\qdl}{\Phi_{\bq}}


\newcommand{\plim}[1]{\mathop{\fam0 \stackrel{\textstyle Lim}
		{\stackrel{\textstyle \longrightarrow}{#1}}}}
\newcommand{\lra}{\longrightarrow}

\newcommand{\varplim}{\mathop{\fam0 \varprojlim}\nolimits}
\newcommand{\e}[1]{\mathbf #1}
\newcommand{\Teim}{Teichm{\"u}ller }



\markboth{J.~E. Andersen J.~J.~K. Nissen}{Asymptotic aspects of the Teichmüller TQFT}
$ $
\bigskip

\bigskip

\centerline{{\Large   Asymptotic aspects of the Teichmüller TQFT}}

\bigskip
\bigskip
\centerline{{\large by J{\o}rgen Ellegaard Andersen and Jens-Jakob Kratmann Nissen\footnote{Work supported in part by the center of excellence grant ``Center for Quantum Geometry of Moduli Spaces'' from the Danish National Research Foundation (DNRF95).}}}

\vspace*{.7cm}

\begin{abstract}
	
We calculate the knot invariant coming from the Teichmüller TQFT \cite{AK1}.
Specifically we calculate the knot invariant for the complement of the knot $6_1$ both in the original \cite{AK1} and the new formulation of the Teichmüller TQFT \cite{AK2} for the one-vertex H-triangulation of $(S^3,6_1)$. We show that the two formulations give equivalent answers. Furthermore we apply a formal stationary phase analysis and arrive at the Andersen-Kashaev volume conjecture as stated in \cite[Conj. 1]{AK1}.

Furthermore we calculate the first examples of knot complements in the new formulation showing that the new formulation is equivalent to the original one in all the special cases considered.

Finally, we provide an explicit isomorphism between the Teichmüller TQFT  representation of the mapping class group of a once punctured torus and a representation of this mapping class group on the space of Schwartz class functions on the real line.
\end{abstract}

\pagestyle{myheadings}

\section{Introduction}
Since discovered and axiomatised by Atiyah \cite{MR1001453}, Segal \cite{MR981378} and Witten \cite{W}, Topological Quantum Field Theories (TQFT's) have been studied extensively. 
The first constructions of such theories in dimension $2+1$ was given by Reshetikhin and Turaev \cite{Turaev, RT1, RT2} who obtained TQFT's through surgery and the combinatorial framework of Kirby calculus, and by Turaev and Viro \cite{TV} using the framework of triangulations and Pachner moves. 
In both constructions the central algebraic ingredients comes from the category of finite dimensional representation of the quantum group $U_{q}(\mathfrak{sl}(2,\C))$ at roots of unity. 
Subsequently Blanchet, Habegger, Masbaum and Vogel gave a pure topological construction using Skein theory \cite{BHMV1,BHMV2}. Recently it has been established by the first author and Ueno that this TQFT is equivalent to the one coming from conformal field theory \cite{AU1,AU2,AU3,AU4} and further by the work of Laszlo \cite{La} in the higher genus case with no marked point and the first author and Egsgaard \cite{AE} in genus zero with marked points (for certain labels), that these TQFT's can be studied from the point of view of geometric quantization of the compact moduli space of flat SU$(2)$ connections. The first author has extensively studied the asymptotics of this TQFT using this quantization of moduli spaces approach to this theory \cite{A1, A2, AGr1, AH1, AMU, A3, A4, A5, AGa1, AB1, A6, AGL, AHi,  A7, AHJMMc}.

A new line of development was initiated by Kashaev in \cite{K6j} where a state sum invariant of links in $3$-manifolds was defined by using the combinatorics of charged triangulations. Here the charges are algebraic versions of dihedral angles of ideal hyperbolic tetrahedra in finite cyclic groups. The approach was subsequently developed further by Baseilhac, Benedetti and by
Geer, Kashaev and Turaev \cite{BB,GKT}. 

New challenges appear when one tries to construct combinatorial versions of Chern--Simons theory with non-compact gauge group such as $\text{PSL}(2,\R)$, which is the isometry group of $2$-dimensional hyperbolic space. When one consideres the corresponding classcial moduli space of flat $\text{PSL}(2,\R)$-connections on a two dimensional surface, a connected component is identified with Teichmüller space, hence this Chern--Simons theory deserves the name Teichm\"{u}ller TQFT.

Quantum Teichmüller theory corresponds to a specific classes of unitary mapping class representations on infinite dimensional Hilbert spaces \cite{K1,CF}. 
Based on quantum Teichmüller theory several formal state-integral partition functions have been studied by Hikami, Dimofte, Gukov, Lenells, Zagier, Dijkgraaf, Fuji, Manabe \cite{HI1,HI2,DGLZ,DFM} with the view to approach the Teichm\"{u}ller TQFT. 
The question about convergence of the studied integrals however remained open until a mathematical rigorous version of Teichmüller TQFT was suggested by the first author and Kashaev in \cite{AK1}. See also \cite{AK1a,AK1b}. The convergence property of the Teichmüller TQFT is a property of the underlying combinatorial setting. An extra structure on the triangulations called a shape structure is imposed where each tetrahedron carries dihedral angles of an ideal hyperbolic tetrahedron. The dihedral angles provide absolute convergence and moreover they implement the complete symmetry with respect to change of edge orientation. 
The positivity condition of dihedral angles seems to impose restrictions on the construction of topologically invariant partition functions. 
In \cite{arXiv:1210.8393} Kashaev, Luo and Vartanov suggests a TQFT of Turaev--Viro type based on the combinatorics of shaped triangulations. As the absolute convergence of the partition function in this model is also based on positivity of dihedral angles, it is similar to the Teichmüller TQFT. A consequence is that as in the case of the Teichmüller TQFT the $2-3$ Pachner move is not immidiately always applicable. However, in this model no other topological restrictions are needed.
A new formulation of the Teichmüller TQFT was suggested in \cite{AK2}. In the new formulation of Teichmüller TQFT both the $2-3$ and $3-2$ Pachner moves are applicable and as in the case of the TQFT of Turaev--Viro type \cite{arXiv:1210.8393} no other topological restrictions are needed. 

Recently in \cite{AK3} the first author of this paper and Kashaev have constructed quantum Chern--Simons theory for $\text{PSL}(2,\C)$ for all non-negative integer levels $k$ and furthermore understood how it relates to geometric quantization of $\text{PSL}(2,\C)$-moduli spaces. They have proposed a general scheme which just requires a Pontryagin self-dual locally compact group, which is expected to lead to the construction of the $\text{SL}(
n,\C)$ quantum Chern-Simons theory for all non-negative integer levels $k$. From the geometric quantization of moduli spaces viewpoint, the corresponding representations of the mapping class groups have been constructed in \cite{AGa2}. This work is closely related to the work of Dimofte \cite{Di} on the physics side.
The Teichmüller TQFT is the complex quantum Chern--Simons TQFT for $\text{PSL}(2,\C)$ at level $k=1.$ See also \cite{AM} in this volume.

\subsection*{Outline}
We will review the construction of the charged tetrahedral operators which originates from Kashaev's quantization of Teichmüller space. The main ingredients in this theory are Penner's cell decomposition of decorated Teichmüller space and the associated Ptolemy groupoid \cite{P1} and Faddeev's quantum dilogarithm \cite{F} which allows us to change polarization on Teichmüller space.  

We recall how the partition function from the Teichmüller TQFT is defined using tetrahedral operators in both the original version \cite{AK1} and in the new formulation \cite{AK2}. 

We will then prove the equivalence of the two versions by direct calculations in several cases and elaborate on the Andersen-Kashaev \textit{volume conjecture} arising in \cite[Conj. 1.]{AK1}.  

Following this, we will investigate the Teichmüller TQFT representation of the genus one, one parked point, mapping class group and prove that it is equivalent to an action of this same mapping class group acting on the space of Schwartz class functions on the real line.

\subsection*{Acknowledgements} We would like to thank Rinat Kashaev for many interesting discussions.

\section{Teichmüller Space}
As mentioned in the introduction quantum Chern--Simons theory with non-compact gauge group is of great interest. The gauge group in Teichmüller theory is $\text{PSL}(2,\R)$ which is the isometry group of 2 dimensional hyperbolic space. 

Let $M$ be a 3-manifold. Recall that the classical phase space of Chern--Simons theory with gauge group $\text{PSL}(2,\R)$ is given by the moduli space of flat connections 
\begin{equation*}
\calM = \Hom(\pi_1(M),\text{PSL}(2,\R))/\text{PSL}(2,\R).
\end{equation*}
It is natural to start from $M=\Sigma \times \R$, where $\pi_1(M)=\pi_1(\Sigma)$, so we can talk about the moduli space of flat connections on the surface
\begin{equation*}
\calM_{\Sigma} = \Hom(\pi_1(\Sigma),\text{PSL}(2,\R))/\text{PSL}(2,\R).
\end{equation*} 
We can write the moduli space as the disjoint union of connected components
\begin{equation*}
\calM_{\Sigma} = \bigsqcup_{-\chi(\Sigma)\leq k\leq \chi(\Sigma)}(\calM_{\Sigma})_k.
\end{equation*}
Teichmüller space is given by the connected component with the maximal index 
\begin{equation*}
\calT_{\Sigma}=(\calM_{\Sigma})_{-\chi(\Sigma)}.
\end{equation*} 
Let $\Sigma=\Sigma_{g,s}$ be a surface of finite type, i.e. $\Sigma$ is an oriented genus $g$ surface with $s$ boundary components or punctures.  
Then, topologically, Teichmüller space is an open ball of dimension $6g-6+2s$, i.e.
\begin{equation*}
\calT_{\Sigma}\cong \R^{6g-6+2s}.
\end{equation*}
Recall that $\calT_{\Sigma}$ is a symplectic space, where the symplectic structure is given by the Weil--Petersson symplectic form. 

\subsection{Penner coordinate system on $\tilde{\calT}_{\Sigma}$} 
Let $\Sigma$ be an oriented genus $g$ surface with $s>0$ punctures and Euler characteristic $2-2g-s<0.$ We denote the set of punctures
\begin{equation*}
V:=\mgd{P_1,\dots,P_s}.
\end{equation*}  
\begin{defn}
	A homotopy class of a path running between $P_i$ and $P_j$ is called an ideal arc. A set of ideal arcs obtained by taking a family $X$ of disjointly embedded ideal arcs in $\Sigma$ running between punctures and subject to the condition that each component of $\Sigma\backslash X$ is a triangle is called an ideal triangulation. Let $\Delta_\Sigma$ denote the set of all ideal triangulations.
\end{defn}

Now take an ideal triangulation $\tau \in \Delta_{\Sigma}$ and calculate all $\lambda$-lengths with respect to a fixed configuration of horocycles. Let $E(\tau)$ denote the set of edges in $\tau$. We impose the equivalence relation 
\begin{equation*}
\lambda \sim \lambda' : E(\tau) \to \R_{>0},
\end{equation*}
if there exists $f: V \to \R_{>0}$ such that 
\begin{equation*}
\lambda'(e)=f(v_1)f(v_2)\lambda(e) \quad e\in E(\tau),
\end{equation*}
where $v_1$ and $v_2$ are endpoints of $e$.
Counting the number of edges and vertices in an ideal triangulation establishes the following
\begin{equation*}
\R^{E(\tau)}_{>0}/\R^{V}_{>0} \cong \R^{6g-6+2s}.
\end{equation*}
The $\lambda$-lengths parametrizes the decorated Teichmüller space $\tilde{\calT}_{\Sigma}$, which is a principal $\R^s_{>0}$ foliated fibration $\phi: \tilde{\calT}_{\Sigma} \to {\calT}_{\Sigma}$, where the fiber over a point of $\calT_{\Sigma}$ is the space of all horocycles about the punctures of $\Sigma$.

\begin{thm}[Penner]
	\begin{enumerate}[(a)]
		\item As a topological space the decorated Teichmüller space is homeomorphic to the set of positive numbers on edges given by 
		$\lambda$-lengths
		\begin{equation*}
		\tilde{\calT}_{\Sigma} \cong \R^{E(\tau)}_{>0}.
		\end{equation*}
		\item Using the map $\phi$ which forgets the horocycles we can pull back the Weil--Petersson symplectic form to the decorated Teichmüller space. The pull-back satisfies the formula
		\begin{equation*} 
		\phi^*\omega_{WP}=\sum_{\begin{tikzpicture}[scale=.2,baseline]
			\draw (-1,0)--(0,2)--(1,0)--cycle;
			\draw (-1,0)--(1,0);
			\draw (0,0) node[anchor=north]{$c$};
			\draw (-.5,.5) node[anchor=south east]{$a$};
			\draw (.5,.5) node[anchor=south west]{$b$};
			\end{tikzpicture}
		} \frac{da \wedge db}{ab}+ \frac{db \wedge dc}{bc}+ \frac{dc \wedge da}{ca}.
		\end{equation*}
		\item The mapping class group is contained in the groupoid generated by Ptolemy transformations.
		Suppose $a,b,c,d,e \in \tau \in \Delta_{\Sigma}$ are such that $\mgd{a,b,e}$ and $\mgd{c,d,e}$ bound distinct triangles. The operation that changes the ideal triangulation $\tau$ into $\tau^{e}$, which consists of the ideal arcs of $\tau$ except $e$, which is replaced by $e'$ such that triangles $\mgd{a,b,e}$ and $\mgd{c,d,e}$ are replaced by $\mgd{b,c,e'}$ and $\mgd{a,d,e'}$, is called an \textit{elementary move} (see Figure \ref{elementary move}). The six $\lambda$-lengths are related by one single equation
		\begin{equation}\label{relationee'}
		ee'=ac+bd.
		\end{equation}
		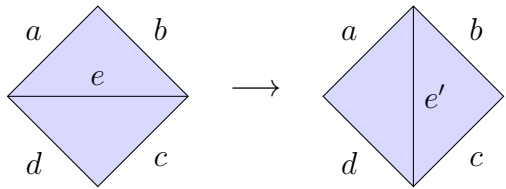
\begin{figure}[h]
		\begin{center}
			\begin{tikzpicture}[scale=.6,baseline]
			\draw[fill=blue!15] (-2,0)--(0,2)--(2,0)--(0,-2)--cycle;
			\draw (-2,0)--(2,0);
			\draw (0,0) node[anchor=south]{$e$};
			\draw (-1,1) node[anchor=south east]{$a$};
			\draw (1,1) node[anchor=south west]{$b$};
			\draw (-1,-1) node[anchor=north east]{$d$};
			\draw (1,-1) node[anchor=north west]{$c$};
			\end{tikzpicture}
			\quad $\longrightarrow$ \quad 
			\begin{tikzpicture}[scale=.6,baseline]
			\draw[fill=blue!15] (-2,0)--(0,2)--(2,0)--(0,-2)--cycle;
			\draw (0,-2)--(0,2);
			\draw (0,0) node[anchor=west]{$e'$};
			\draw (-1,1) node[anchor=south east]{$a$};
			\draw (1,1) node[anchor=south west]{$b$};
			\draw (-1,-1) node[anchor=north east]{$d$};
			\draw (1,-1) node[anchor=north west]{$c$};
			\end{tikzpicture}\quad
			\caption{Elementary move}
			\label{elementary move}
			\end{center}
		\end{figure}
	\end{enumerate}
\end{thm}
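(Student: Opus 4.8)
The plan is to establish the three parts in the order (a), (c), (b), because the Minkowski (light cone) model that makes the parametrization (a) transparent also delivers the Ptolemy relation in (c) almost immediately, and the genuinely computational symplectic statement (b) is best left for last. For part (a) I would work in the hyperboloid model $\mathbb{H}^2\subset\mathbb{R}^{2,1}$ and lift the decorated surface to the universal cover, so that each puncture lifts to a point of the positive light cone $L^+=\{v:\langle v,v\rangle=0,\ v_0>0\}$ and each decorating horocycle lifts canonically to a definite null vector $v\in L^+$ (the standard correspondence between horocycles and light-cone vectors). For two decorated ideal points represented by null vectors $u,v$, I set $\lambda=\sqrt{-\langle u,v\rangle}$ and check that, up to the standard normalization, this agrees with the intrinsic $\lambda$-length $\sqrt{2e^{\delta}}$ defined from the signed horocyclic distance $\delta$. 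This produces the coordinate map $\tilde{\calT}_{\Sigma}\to\mathbb{R}^{E(\tau)}_{>0}$. To invert it, I would use the rigidity fact that any three positive reals $(a,b,c)$ are the $\lambda$-lengths of a \emph{unique} decorated ideal triangle up to $\mathrm{PSL}(2,\mathbb{R})$, since three null vectors with prescribed negative pairwise Minkowski products are determined up to the Lorentz action by their Gram matrix. Gluing these model triangles along shared edges with matching $\lambda$-lengths and verifying that the developing map and its holonomy are well defined yields the inverse, and the dimension count $|E(\tau)|=6g-6+3s=\dim\tilde{\calT}_{\Sigma}$ confirms the map is a homeomorphism.

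For the Ptolemy relation in part (c) I would stay in the light cone. Representing the four ideal vertices of the quadrilateral of Figure \ref{elementary move} by null vectors $p_1,p_2,p_3,p_4$, the relation $ee'=ac+bd$ is exactly the classical Ptolemy identity for a cyclic quadrilateral: it follows from a Plücker-type quadratic relation among the six pairwise products $\langle p_i,p_j\rangle$, forced by the fact that four vectors in the $3$-dimensional space $\mathbb{R}^{2,1}$ are linearly dependent (equivalently, a Gram-determinant relation). Substituting $\lambda_{ij}=\sqrt{-\langle p_i,p_j\rangle}$ and reading off the opposite-side pairing $(a,c)$ and $(b,d)$ from the cyclic configuration gives $ee'=ac+bd$ after fixing signs from convexity. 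For the claim that the mapping class group lies in the Ptolemy groupoid, I would invoke the connectivity of the flip graph: any two ideal triangulations of $\Sigma$ are joined by a finite sequence of elementary moves (a Hatcher--Harer--Penner-type theorem on the arc complex). Since a mapping class $\varphi$ carries $\tau$ to $\varphi(\tau)$, choosing a flip sequence from $\tau$ to $\varphi(\tau)$ and composing with the relabeling isomorphism realizes $\varphi$ as a morphism in the groupoid generated by Ptolemy transformations.

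Finally, part (b) is the computational heart, and I expect it to be the main obstacle. I would prove the formula by reducing $\omega_{WP}$ to a coordinate expression that is already understood and then changing variables into $\lambda$-lengths. Concretely, I would pass through Thurston's shear coordinates $z_e$ on $\calT_{\Sigma}$, for which the Weil--Petersson form is a known sum of wedge products of the $d\log z_e$ indexed by the corners around each vertex; crucially, each $z_e$ is the cross-ratio of the four ideal points of the two triangles adjacent to $e$, so that as a function on $\tilde{\calT}_{\Sigma}$ one has $\log z_e=\log\lambda_a+\log\lambda_c-\log\lambda_b-\log\lambda_d$. This last identity is linear in the logarithmic coordinates, so pulling back along $\phi$ and substituting turns the vertex sum into a manipulation of logarithmic differentials that collapses to the stated per-triangle sum $\sum_t\left(\tfrac{da\wedge db}{ab}+\tfrac{db\wedge dc}{bc}+\tfrac{dc\wedge da}{ca}\right)$.

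The two delicate points in part (b), and the places I expect real work, are the following. First, the cross-ratios $z_e$ are invariant under the fiberwise rescaling $\lambda(e)\mapsto f(v_1)f(v_2)\lambda(e)$, so the $d\log z_e$ automatically lie in the image of $\phi^{*}$; this is exactly what forces the resulting $2$-form to be degenerate along the $\mathbb{R}^{s}_{>0}$ fibers of $\phi$, and I would need to check that the per-triangle formula shares this degeneracy, which is the consistency statement making ``$\phi^{*}\omega_{WP}$'' well posed. Second, I must verify that the antisymmetrization coming from the shear-coordinate formula reproduces precisely the three cyclic terms per triangle, with the correct signs and overall normalization, rather than merely a form agreeing with $\omega_{WP}$ up to a constant; pinning down this constant is where the argument is most error-prone and where a careful orientation and normalization bookkeeping is unavoidable.
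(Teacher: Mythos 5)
The paper does not actually prove this theorem: it is quoted as background and attributed to Penner's paper \cite{P1}, so there is no in-text proof to compare against. Judged on its own terms, your sketch is a faithful reconstruction of Penner's original arguments and is essentially correct. For (a) and (c) the light-cone model is exactly the right device: the horocycle--null-vector correspondence, the Gram-matrix rigidity of a decorated ideal triangle, and the gluing/developing-map argument give the homeomorphism $\tilde{\calT}_{\Sigma}\cong\R^{E(\tau)}_{>0}$ (your count $|E(\tau)|=6g-6+3s$ is right), and the vanishing of the $4\times 4$ Gram determinant of four null vectors in $\R^{2,1}$ does factor Heron-style into $(\pm ac\pm bd\pm ee')$ terms, with convexity selecting $ee'=ac+bd$; the flip-graph connectivity argument for the mapping class group is also the standard one. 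The one place your plan is under-specified is (b): what is ``known'' in shear coordinates is the Weil--Petersson \emph{Poisson bivector} (Fock's formula $\{z_e,z_{e'}\}=\epsilon_{ee'}z_ez_{e'}$), not a closed-form expression for the $2$-form itself, so you cannot simply substitute $\log z_e=\log\lambda_a+\log\lambda_c-\log\lambda_b-\log\lambda_d$ into a wedge-product formula and collapse. You would either have to show that the per-triangle form, restricted to the quotient $\R^{E(\tau)}_{>0}/\R^{V}_{>0}$ (where it is nondegenerate --- your telescoping degeneracy check along the fibers does work), has inverse bivector equal to Fock's, or else prove directly that the per-triangle form is invariant under the Ptolemy flip \eqref{relationee'} and under the mapping class group and then pin the normalization against $\omega_{WP}$ on a single example. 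Either route closes the gap, but as written the reduction step in (b) is the only part that does not yet constitute a proof.
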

Due to positivity this is a global coordinate change between parametrizations associated to two ideal triangulations. Two ideal triangulations are related through a sequence of flips. Composing the relations on Ptolemy transformations one obtains the relations between two coordinate systems. 

\subsection{Ratio Coordinates}

\begin{defn}
	An ideal triangulation with a choice of distinguished corner for each triangle is called a \textit{decorated ideal triangulation} (d.i.t).
\end{defn}

For an ideal triangle with sides having $\lambda$-lengths $a,b,c$ we assign ratio coordinates according to Figure \ref{rc}, where $t = \left(\frac{a}{c},\frac{b}{c}\right) = (t_1,t_2)$.
\begin{figure}[h]
\begin{center}
	\begin{tikzpicture}[scale=.6,baseline]
	\draw[fill=blue!15] (-2,-1)--(0,1)--(2,-1)--cycle;
	\draw (0,-1) node[anchor=north]{$c$};
	\draw (-1,0) node[anchor=south east]{$a$};
	\draw (1,0) node[anchor=south west]{$b$};
	\draw (0,1) node[anchor=north]{$*$};
	\end{tikzpicture}
	\quad $\longrightarrow$ \quad 
	\begin{tikzpicture}[scale=.6,baseline]
	\draw [fill=blue!15](-2,-1)--(0,1)--(2,-1)--cycle;
	\draw (0,-.2)node{$t$};
	\draw (0,1) node[anchor=north]{$*$};
	\end{tikzpicture}
	\caption{Ratio coordinates}
	\label{rc}
	\end{center}
\end{figure}
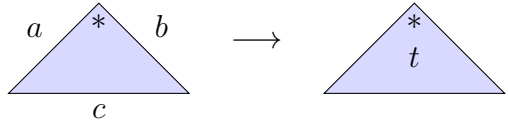
The pull back of the Weil--Petersson symplectic 2-form is then written in the very simple way
\begin{equation*} 
\phi^*\omega_{WP}=\sum_{\begin{tikzpicture}[scale=.3,baseline]
	\draw (-1,0)--(0,2)--(1,0)--cycle;
	\draw (-1,0)--(1,0);
	\draw (0,2.2) node[anchor=north]{$\cdot$};
	\draw (0,.6) node{$t$};
	\end{tikzpicture}} \frac{dt_1 \wedge dt_2}{t_1t_2}
=: \sum_{\begin{tikzpicture}[scale=.3,baseline]
	\draw (-1,0)--(0,2)--(1,0)--cycle;
	\draw (-1,0)--(1,0);
	\draw (0,2.2) node[anchor=north]{$\cdot$};
	\draw (0,.6) node{$t$};\end{tikzpicture}} \omega_t,
\end{equation*}
where the sum is over all triangles. 

The d.i.t. $\tau_t$ obtained from $\tau$ by a change of distinguished corner of triangle $t$ as indicated in Figure \ref{elementary change}
is said to be obtained from $\tau$ by the \textit{elementary change of decoration} in triangle $t$. 
The d.i.t. $\tau^{e}$ obtained from the d.i.t. $\tau$ by the elementary move along the i.a. $e$, where distinguished corners are as indicated in Figure \ref{dem}, is said to be obtained from $\tau$ by the \textit{decorated elementary move} along the i.a. $e$.  

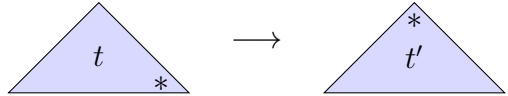
\begin{figure}[h]
\begin{center}
	\begin{tikzpicture}[scale=.6,baseline]
	\draw [fill=blue!15](-2,-1)--(0,1)--(2,-1)--cycle;
	\draw (1.8,-0.8) node[anchor=east]{$*$};
	\draw (0,-.2)node{$t$};
	\end{tikzpicture} \quad $\longrightarrow$ \quad 
	\begin{tikzpicture}[scale=.6,baseline]
	\draw [fill=blue!15](-2,-1)--(0,1)--(2,-1)--cycle;
	\draw (0,-.2)node{$t'$};
	\draw (0,1) node[anchor=north]{$*$};
	\end{tikzpicture}\caption{Elementary change of decoration.}
	\label{elementary change}
\end{center}
\end{figure}

\begin{figure}[h]
\begin{center}
	\begin{tikzpicture}[scale=.6,baseline]
	\draw[fill=blue!15] (-2,0)--(0,2)--(2,0)--(0,-2)--cycle;
	\draw (0,-2)--(0,2);
	\draw (-1.2,0) node[anchor=west]{$x$};
	\draw (1.2,0) node[anchor=east]{$y$};
	\draw (-.2,1.5) node{$*$};
	\draw (.2,1.5) node{$*$};
	\end{tikzpicture}
	\quad $\longrightarrow$ \quad 
	\begin{tikzpicture}[scale=.6,baseline]
	\draw[fill=blue!15] (-2,0)--(0,2)--(2,0)--(0,-2)--cycle;
	\draw (-2,0)--(2,0);
	\draw (0,1.2) node[anchor=north]{$u$};
	\draw (0,-1.2) node[anchor=south]{$v$};
	\draw (-1.5,-.2) node{$*$};
	\draw (0,2) node[anchor=north]{$*$};
	\end{tikzpicture}
	\caption{Decorated elementary move}
	\label{dem}
	\end{center}
\end{figure}
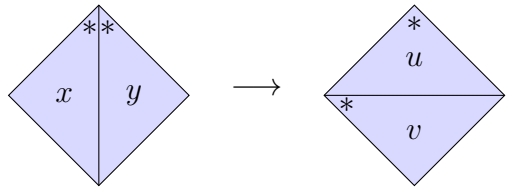

It is easily seen that the coordinates $u,v$ are related to the coordinates $x,y$. The relation is given by the two functions in $x$ and $y$.
\begin{align*}
u &=x \cdot y = (x_1y_1,x_1y_2+x_2), \\
v &=x * y = \left(\frac{x_2y_1}{x_1y_2+x_2},\frac{y_2}{x_1y_2+x_2} \right).
\end{align*}
We now observe that 
\begin{equation*}
\omega_x+\omega_y=\omega_u+\omega_v,
\end{equation*}
so that the change of coordinate with respect to this transformation
$$T:(x,y) \mapsto (u,v)$$
 is a symplectomorphism of $\R^4_{>0}$.

\section{Tetrahedral operator from quantum Teichmüller theory}
We recall the main algebraic ingredients of quantum Teichmüller theory, following the approach of \cite{K1,K2,K3}. Consider the canonical quantization of $T^*\R^n$ with the standard symplectic structure in the position representation. The Hilbert space we get is $L^2(\R^n)$. Position coordinates $q_i$ and momentum coordinates $p_i$ on $T^*\R^n$ upon quantization becomes selfadjoint unbounded operators  $\pos_i$ and $\mom_i$ acting on $L^2(\R^n)$ via the formulae 
\begin{equation*}
\pos_j (f)(t)=t_jf(t), \quad \mom_j(f)(t)=\frac{1}{2 \pi i}\frac{\partial}{\partial t_j}f(t), \quad \forall t\in \R^n, 
\end{equation*}
satisfying the Heisenberg commutation relations 
\begin{equation}
[\pos_j,\pos_k]=[\mom_j,\mom_k]=0, \quad [\mom_j,\pos_k]=\frac{1}{2\pi i}\delta_{j,k}.
\end{equation}
By the spectral theorem, one defines the operators 
\begin{equation*}
\sfu_i = e^{2\pi \bq  \pos_i}, \quad \sfv_i = e^{2 \pi \bq \mom_i}.
\end{equation*}
The commutation relations for $\sfu_i$ and $\sfv_j$ takes the form
\begin{equation*}
[\sfu_j,\sfu_k]=[\sfv_j,\sfv_k]=0, \quad \sfu_j\sfv_k = e^{2\pi \bq^2 \delta_{j,k}}\sfv_k\sfu_j.
\end{equation*}
Consider the operations for $\wv_j=(\sfu_j,\sfv_j), j\in \mgd{1,2},$
\begin{align}
\wv_{1}\cdot \wv_{2} &:=(\sfu_1\sfu_2,\sfu_1\sfv_2+\sfv_1), \\ 
\wv_{1}* \wv_{2} &:=(\sfv_1\sfu_2(\sfu_1\sfv_2+\sfv_1)^{-1},\sfv_2(\sfu_1\sfv_2+\sfv_1)^{-1}).
\end{align}

\begin{prop}[Kashaev]\label{prop:Kashaev}
	Let $\psi(z)$ be some solution to the functional equation 
	\begin{equation}\label{eq:functional}
	\psi\left(z+\frac{i\bq}{2}\right)=\psi\left(z-\frac{i\bq}{2}\right)(1+e^{2\pi \bq z}), \quad z\in \C.
	\end{equation}
	Then, the operator 
	\begin{equation}\label{eq:operator}
	\pto = \pto_{12}  := e^{2\pi i \mom_1 \pos_2} \psi\left(\pos_1-\pos_2+\mom_2\right), 
	\end{equation}
	defines a continuous linear map from $\calS(\R^4)$ to $\calS(\R^4)$, which satisfies the equations 
	\begin{align}\label{tetraop}
	\wv_1 \cdot \wv_2 \pto = \pto \wv_1, \quad \wv_1 * \wv_2 \pto = \pto \wv_2.
	\end{align}
\end{prop}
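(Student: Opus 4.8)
The plan is to turn the two operator identities \eqref{tetraop} into four scalar conjugation relations and verify them by commuting $\pto$ past $\sfu_i,\sfv_i$ in two stages. Factor $\pto=L\,\Psi$ with $L:=e^{2\pi i\mom_1\pos_2}$ and $\Psi:=\psi(Z)$, $Z:=\pos_1-\pos_2+\mom_2$. Reading \eqref{tetraop} componentwise, the claim is equivalent to
\begin{align*}
\pto\sfu_1\pto^{-1}&=\sfu_1\sfu_2, & \pto\sfv_1\pto^{-1}&=\sfu_1\sfv_2+\sfv_1,\\
\pto\sfu_2\pto^{-1}&=\sfv_1\sfu_2(\sfu_1\sfv_2+\sfv_1)^{-1}, & \pto\sfv_2\pto^{-1}&=\sfv_2(\sfu_1\sfv_2+\sfv_1)^{-1},
\end{align*}
and since $\pto A\pto^{-1}=L(\Psi A\Psi^{-1})L^{-1}$, I would compute the inner conjugation by $\Psi$ first and then apply $\Ad_L$.

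For the linear step, $2\pi i\mom_1\pos_2$ has only $c$-number iterated commutators with the generators, so the adjoint series terminates and $\Ad_L$ is the symplectic map $\pos_1\mapsto\pos_1+\pos_2$, $\pos_2\mapsto\pos_2$, $\mom_1\mapsto\mom_1$, $\mom_2\mapsto\mom_2-\mom_1$. Exponentiating and using that commuting generators factor gives $L\sfu_1L^{-1}=\sfu_1\sfu_2$, $L\sfu_2L^{-1}=\sfu_2$, $L\sfv_1L^{-1}=\sfv_1$, $L\sfv_2L^{-1}=\sfv_2\sfv_1^{-1}$; this is precisely the quantisation of the linear part of the classical move $(x,y)\mapsto(u,v)$ of Section~2.2.

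For the dilogarithm step, each generator shifts $Z$ by a scalar: $\sfu_1$ commutes with $Z$, while $\sfu_2 Z\sfu_2^{-1}=Z+i\bq$, $\sfv_1 Z\sfv_1^{-1}=Z-i\bq$ and $\sfv_2 Z\sfv_2^{-1}=Z+i\bq$. Thus $\Psi\sfu_1\Psi^{-1}=\sfu_1$, and for $g\in\{\sfu_2,\sfv_1,\sfv_2\}$ the relation $\psi(Z)\,g=g\,\psi(g^{-1}Zg)$ converts the conjugation into multiplication by a ratio of shifted dilogarithms. The functional equation \eqref{eq:functional}, read at $z=Z\pm i\bq/2$, collapses each ratio to a factor $1+e^{2\pi\bq(Z\pm i\bq/2)}$ (or its inverse); e.g. $\Psi\sfv_1\Psi^{-1}=\sfv_1\bigl(1+e^{2\pi\bq(Z+i\bq/2)}\bigr)$ and $\Psi\sfv_2\Psi^{-1}=\sfv_2\bigl(1+e^{2\pi\bq(Z-i\bq/2)}\bigr)^{-1}$. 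A single Baker--Campbell--Hausdorff computation identifies $e^{2\pi\bq Z}=\sfu_1\sfv_2\sfu_2^{-1}e^{-\pi i\bq^2}$, so that $e^{2\pi\bq(Z+i\bq/2)}=\sfu_1\sfv_2\sfu_2^{-1}$ and $e^{2\pi\bq(Z-i\bq/2)}=\sfu_1\sfv_2\sfu_2^{-1}e^{-2\pi i\bq^2}$.

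Composing the two steps and simplifying with the Weyl relation $\sfu_j\sfv_k=e^{2\pi i\bq^2\delta_{jk}}\sfv_k\sfu_j$, all Gaussian scalars cancel. For instance, applying $\Ad_L$ to the $\sfu_1\sfv_2\sfu_2^{-1}$-form gives $L\,e^{2\pi\bq(Z-i\bq/2)}L^{-1}=\sfu_1\sfv_2\sfv_1^{-1}$, whence $\pto\sfv_2\pto^{-1}=\sfv_2\sfv_1^{-1}\bigl(1+\sfu_1\sfv_2\sfv_1^{-1}\bigr)^{-1}$; writing $1+\sfu_1\sfv_2\sfv_1^{-1}=(\sfv_1+\sfu_1\sfv_2)\sfv_1^{-1}$ yields exactly $\sfv_2(\sfu_1\sfv_2+\sfv_1)^{-1}$, and the remaining three identities follow the same pattern. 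I expect the main obstacle to be precisely this noncommutative bookkeeping -- tracking the factors $e^{\pm\pi i\bq^2}$ produced by each BCH split and checking they cancel against the Weyl factors -- together with making the manipulations rigorous on a common invariant domain. Note that the intertwining relations use only the functional equation and so hold for \emph{any} solution $\psi$; the continuity statement $\pto\colon\calS\to\calS$ is where the distinguished choice matters, and there I would combine the functional calculus for the essentially self-adjoint operator $Z$ with the boundedness of Faddeev's quantum dilogarithm on the real axis and the fact that the metaplectic operator $L$ preserves $\calS$.
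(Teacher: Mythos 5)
Your proposal is correct and follows essentially the same route as the paper's proof in Appendix B: both split $\pto$ into the Gaussian factor $e^{2\pi i\mom_1\pos_2}$ and the dilogarithm factor $\psi(\pos_1-\pos_2+\mom_2)$, and verify \eqref{tetraop} as a system of intertwining relations using the Heisenberg commutation relations for the former and the functional equation \eqref{eq:functional} for the latter. The only organizational difference is that the paper conjugates the combinations $\pos_1$, $\mom_1+\mom_2$, $\mom_1+\pos_2$ and $e^{2\pi\bq\mom_1}$ (chosen so that all but the last commute with the argument of $\psi$, so the functional equation is invoked only once), whereas you conjugate each Weyl generator $\sfu_i,\sfv_i$ separately, which costs you the extra Baker--Campbell--Hausdorff bookkeeping of the $e^{\pm\pi i\bq^2}$ factors that you correctly identify and resolve.
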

For a proof of this proposition see \cite{AK1} and Appendix \ref{TOP}.
One particular solution of \eqref{eq:functional} is given by Faddeev's quantum dilogarithm \cite{F}
\begin{equation}
\psi(z)= \frac{1}{\qdl(z)}.
\end{equation} The most important property of the operator \eqref{eq:operator} is the pentagon identity in $L^2(\R^3)$
\begin{equation}\label{eq:five}
\pto_{12}\pto_{13}\pto_{23}=\pto_{23}\pto_{12},
\end{equation}which follows from the five term identity \eqref{eq:fiveterm} satisfied by Faddeev's quantum dilogarithm. The indices in \eqref{eq:five} has the standard meaning. For example $\pto_{13}$ is obtained from $\pto_{12}$ by replacing $\pos_2$ and $\mom_2$ with $\pos_3$ and $\mom_3$ respectively and so forth.

\subsection{Oriented triangulated pseudo 3-manifolds}
Consider the disjoint union of finitely many copies of the standard 3-simplices in $\R^3$, each having totally ordered vertices. The order of the vertices induces an orientation on edges. Identify some codimension-1 faces of this union in pairs by vertex order preserving and orientation reversing affine homeomorphisms called \textit{gluing homeomorphisms}. The quotient space $X$ is a specific $CW$-complex with oriented edges which will be called an oriented \textit{triangulated pseudo 3-manifold}. For $i\in \mgd{0,1,2,3}$, we denote by $\Delta_i(X)$ the set of $i$-dimensional cells in $X$. 
For any $i>j$, we denote 
\begin{equation*}
\Delta_{i}^{j}(X)=\mgd{(a,b) \mid a\in \Delta_i(X), \: b\in \Delta_j(a)}
\end{equation*}
with natural projection maps 
\begin{align*}
\phi_{i,j}: \Delta_i^j(X) \to \Delta_i(X),  \quad \phi^{i,j}: \Delta_i^j(X) \to \Delta_j(X).
\end{align*}
We also have canonical boundary maps 
\begin{equation*}
\partial_i : \Delta_j(X) \to \Delta_{j-1}(X), \quad 0 \leq i \leq j,
\end{equation*}
which in the case of a $j$-dimensional simplex $S=[v_0,v_1,\dots,v_j]$ with ordered vertices $v_0,v_1,\dots,v_j$ in $\R^3$ takes the form
\begin{equation*}
\partial_i S = [v_0,\dots,v_{i-1},v_{i+1},\dots,v_j], \quad i\in \mgd{0,\dots,j}.
\end{equation*}

\subsection{Shaped 3-manifolds} Let $X$ be an oriented triangulated pseudo 3-manifold. 
\begin{defn}
	A \textit{shape structure} on $X$ is an assignment to each edge of each tetrahedron of $X$ a positive number called the dihedral angle,
	\begin{equation*}
	\alpha_X : \Delta_{3}^1(X) \to \R_+
	\end{equation*}
	so that the sum of the three angles at the edges from each vertex of each tetrahedron is $\pi$. An oriented triangulated pseudo 3-manifold with a shape structure will be called a shaped pseudo 3-manifold. 
\end{defn}
It is straightforward to see that the dihedral angles at opposite edges are equal. 

\begin{figure}[h]
\begin{center}
	\begin{tikzpicture}[scale=.8,>=stealth,baseline]
	\draw[fill=blue!15] (-2,0)--(.5,-1)--(2,1)--(-.5,3)--(-2,0);
	\draw[dashed,black!40] (0,0.5)--(2,1) (-2,0)--(0,0.5);
	\draw (0,1)--(.5,-1) (-.5,3)--(0,1);
	\draw (-.5,3) node[anchor=south]{$0$};
	\draw (-2,0) node[left]{$1$};
	\draw (.5,-1) node[anchor=north]{$2$};
	\draw (2,1) node[right]{$3$};
	\draw (-0.75,-0.5) node[anchor=north east]{$\gamma$};
	\draw (0,0.4) node[anchor=south east,black!40]{$\beta$};
	\draw (0,1) node[anchor= west]{$\beta$};
	\draw (-1.25,1.5) node[anchor=south east]{$\alpha$};
	\draw (1.25,0) node [anchor=north west]{$\alpha$};
	\draw (.75,2) node [anchor=south west]{$\gamma$};
	\end{tikzpicture} 
	\caption{Labeling of edges by dihedral angles.}
	\label{dihedral}
	\end{center}
\end{figure}
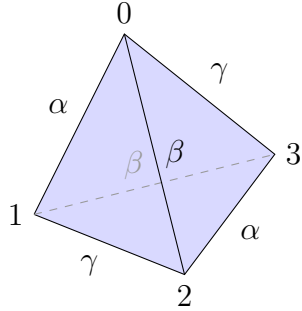

\begin{defn}
	To each shape structure on $X$, we associate a \textit{Weight function} 
	\begin{equation*}
	\omega_X : \Delta_1(X) \to \R_+,
	\end{equation*}
	which to each edge of $X$ associates the total sum of dihedral angles around it
	\begin{equation*}
	\omega_X(e)=\sum_{(T,e) \in \Delta_3^1(X)} \alpha_X (T,e).
	\end{equation*}
	An edge of a shaped pseudo 3-manifold $X$ will be called \textit{balanced} if it is internal and $\omega_X(e)=2\pi$. We call a shaped pseudo 3-manifold \textit{fully balanced} if all edges of $X$ are balanced.
\end{defn} 

\subsection{Shape gauge transformation}
In the space of shape structures on a pseudo 3-manifold there is a gauge group action. The gauge group is generated by the total dihedral angles around internal edges acting through the Neumann--Zagier Poisson bracket. See \cite{AK1} for further details.

\subsection{Geometric interpretation of the five term identity}
For an operator $\pto$ we denote the integral kernel of the operator as $\langle x_0,x_2\mid \pto \mid x_1,x_3 \rangle$.
Then the pentagon identity can be written in the following way
\begin{equation}\label{eq:pent.}
\langle x,y,z \mid \pto_{12}\pto_{13}\pto_{23} \mid u,v,w\rangle = \langle x,y,z \mid \pto_{23}\pto_{12} \mid u,v,w\rangle
\end{equation}
Decomposition of unity gives for the left hand side of \eqref{eq:pent.}
\begin{align*}
\langle x,y,z \mid \pto_{12}\pto_{13}\pto_{23} \mid u,v,w\rangle 
= \int &\langle x,y \mid \pto \mid \alpha_1,\alpha_2\rangle \langle \alpha_1, z \mid \pto \mid u,\beta_3\rangle \\ &\langle \alpha_2,\beta_3 \mid \pto \mid v,w\rangle  d\alpha_{1}d\alpha_{2}d\beta_{3}.
\end{align*}
Decomposing of unity for the right hand side gives
\begin{align*}
\langle x,y,z \mid \pto_{23}\pto_{12} \mid u,v,w\rangle & 
=\int \langle y,z \mid \pto \mid \gamma_2,w \rangle \langle x,\gamma_2 \mid \pto \mid u,v \rangle d\gamma_2.
\end{align*}
To make the correspondence between the pentagon identity and the 3-2 Pachner move precise, we label each vertex of a tetrahedron $T$ with a number $i\in \mgd{0,1,2,3}.$ The numbers on vertices induce an orientation on edges, i.e. we put arrows on the edges pointing in the direction from the smaller to the bigger label on vertices. The number at a vertex corresponds to the number of incoming edges, see Figure \ref{branching}.
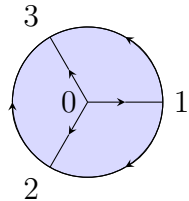
\begin{figure}[h]
\begin{center}
	\begin{tikzpicture}[scale=.5,>=stealth,baseline]
	\draw[fill=blue!15] (0,0) circle (2);
	\draw[->] (1,0)--(2,0) (0,0)--(1,0);
	\draw[->](-120:1)--(-120:2) (0,0)--(-120:1);
	\draw[->](120:1)--(120:2) (0,0)--(120:1);
	\draw[->](2,0) arc (0:60:2);
	\draw[->](2,0) arc (0:-60:2);
	\draw[->](-120:2) arc (-120:-180:2);
	\draw (60:2) arc (60:180:2);
	\draw (-60:2) arc (-60:-120:2);
	\draw (0,0) node[left]{$0$};
	\draw (2,0) node[right]{$1$};
	\draw (-120:2) node[anchor=north east]{$2$};
	\draw (120:2) node[anchor=south east]{$3$};
	\end{tikzpicture}
	\caption{Interpretation of a positively oriented tetrahedron.}
	\label{branching}
\end{center}
\end{figure}

\subsection{States}
A state of a tetrahedron $T$ with totally ordered vertices $\mgd{0,1,2,3}$ is a map $$x:\Delta_2(X)\to \R.$$ A tetrahedron in state $x$ is illustrated in Figure \ref{fig:state}, where $x_i:=x(\partial_i T).$ 
\begin{figure}[h]
\begin{center}
	\begin{tikzpicture}[scale=.7,>=stealth,baseline]
	\draw[fill=blue!15] (0,0) circle (2);
	\draw[->] (1,0)--(2,0) (0,0)--(1,0);
	\draw[->](-120:1)--(-120:2) (0,0)--(-120:1);
	\draw[->](120:1)--(120:2) (0,0)--(120:1);
	\draw[->](2,0) arc (0:60:2);
	\draw[->](2,0) arc (0:-60:2);
	\draw[->](-120:2) arc (-120:-180:2);
	\draw (60:2) arc (60:180:2);
	\draw (-60:2) arc (-60:-120:2);
	\draw (4,0) node[left]{$x_0$};
	\draw (-1,0) node{$x_1$};
	\draw (1,1) node[left]{$x_2$};
	\draw (1,-1) node[left]{$x_3$};
	\end{tikzpicture}\qquad\qquad
	\caption{A tetrahedron in state $x$.}
	\label{fig:state}
	\end{center}
\end{figure}
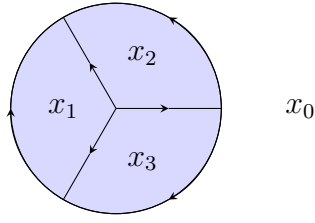

We identify a tetrahedron $T$ in state $x$ as in Figure \ref{fig:state} with the integral kernel $\langle x_0 , x_2 \mid \pto \mid x_1,x_3 \rangle$. This gives a geometric interpretation of the pentagon identity \eqref{eq:pent.} as the 2-3 Pachner move as illustrated in Figure \ref{fig:pachnerdecomp} and Figure \ref{fig:pachner}.
\begin{figure}[h]
\begin{center}
	\begin{tikzpicture}[scale=.7,>=stealth,baseline]
	\draw[fill=blue!15] (2,2)--(2,-2)--(4,0)--cycle;
	\draw[->] (3,1)--(2,2) (4,0)--(3,1);
	\draw[->] (3,-1)--(4,0) (2,-2)--(3,-1);
	\draw[->] (2,0)--(2,2) (2,-2)--(2,0);
	\draw[->] (2.3,-1.35)--(2.6,-.7) (2,-2)--(2.3,-1.35);
	\draw[->] (2.3,0.65)--(2.6,-.7) (2,2)--(2.3,0.65);
	\draw[->] (3.3,-.35)--(2.6,-.7) (4,0)--(3.3,-.35);
	\draw[fill=blue!15] (-1.5,1)--(0,3)--(1.5,1)--(0,-1)--cycle;
	\draw[->][dashed] (.3,1)--(1.5,1) (-1.5,1)--(.3,1);
	\draw[->] (0,2)--(0,3) (0,-1)--(0,2);
	\draw[->] (-0.75,0)--(0,-1) (-1.5,1)--(-0.75,0);
	\draw[->] (-0.75,2)--(0,3) (-1.5,1)--(-0.75,2);
	\draw[->] (0.75,2)--(0,3) (1.5,1)--(0.75,2);
	\draw[->] (0.75,0)--(1.5,1) (0,-1)--(0.75,0);
	\draw[fill=blue!15] (-2,2)--(-2,-2)--(-4,0)--cycle;
	\draw[->] (-3,1)--(-2,2) (-4,0)--(-3,1);
	\draw[->] (-3,-1)--(-2,-2) (-4,0)--(-3,-1);
	\draw[->] (-2,0)--(-2,2) (-2,-2)--(-2,0);
	\draw[->] (-2.3,-1.35)--(-2.6,-.7) (-2,-2)--(-2.3,-1.35);
	\draw[->] (-2.3,0.65)--(-2.6,-.7) (-2,2)--(-2.3,0.65);
	\draw[->] (-3.3,-.35)--(-2.6,-.7) (-4,0)--(-3.3,-.35);
	\end{tikzpicture}
	\quad=\quad 
	\begin{tikzpicture}[scale=.6,>=stealth,baseline]
	\draw[fill=blue!15] (2,.5)--(0,2.5)--(-2,0.5)--(.6,-.2)--cycle;
	\draw (0,2.5)--(.6,-.2);
	\draw[->] (-.7,0.15)--(.6,-.2) (-2,.5)--(-.7,0.15);
	\draw[->] (1.3,0.15)--(.6,-.2) (2,.5)--(1.3,0.15);
	\draw[->] (1,1.5)--(0,2.5) (2,.5)--(1,1.5);
	\draw[->] (-1,1.5)--(0,2.5) (-2,.5)--(-1,1.5);
	\draw[->][dashed] (0,0.5)--(2,0.5) (-2,0.5)--(0,0.5);
	\draw[fill=blue!15] (2,-.5)--(-2,-0.5)--(0,-2.5)--cycle;
	\draw[->] (0,-0.5)--(2,-0.5) (-2,-.5)--(0,-.5);
	\draw[->] (0.3,-1.85)--(0.6,-1.2) (0,-2.5)--(0.3,-1.85);
	\draw[->] (-0.7,-0.85)--(.6,-1.2) (-2,-.5)--(-0.7,-0.85);
	\draw[->] (1.3,-0.85)--(.6,-1.2) (2,-.5)--(1.3,-0.85);
	\draw[->] (1,-1.5)--(2,-.5) (0,-2.5)--(1,-1.5);
	\draw[->] (-1,-1.5)--(0,-2.5) (-2,-.5)--(-1,-1.5);
	\draw[->][dashed] (0,0.5)--(2,0.5) (-2,0.5)--(0,0.5);
	\end{tikzpicture}
	\caption{Decomposition of tetrahedra in the 2-3 Pachner move.}
	\label{fig:pachnerdecomp}
	\end{center}
\end{figure}
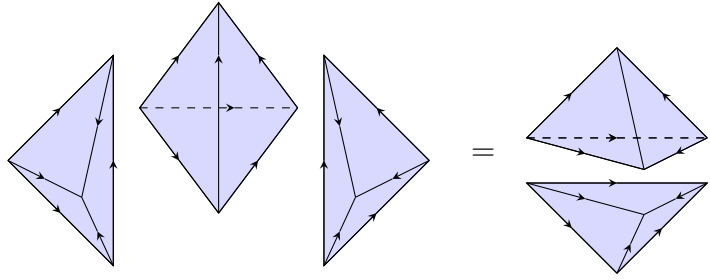
The integrations corresponds to gluing of faces as illustrated in Figure \ref{fig:pachner}. 
\begin{figure}[h]
\begin{center}
	\begin{tikzpicture}[scale=.6,baseline]
	\draw[fill=blue!15] (2,0)--(0,2)--(-2,0)--(0,-2)--cycle;
	\draw (0,2)--(.7,-.7)--(0,-2);
	\draw (-2,0)--(.7,-.7)--(2,0);
	\draw[dashed] (-2,0)--(2,0);
	\draw[dotted] (0,2)--(0,-2);
	\end{tikzpicture}\quad
	$=$ \quad
	\begin{tikzpicture}[scale=.6,baseline]
	\draw[fill=blue!15] (2,0)--(0,2)--(-2,0)--(0,-2)--cycle;
	\draw (0,2)--(.6,-.7)--(0,-2);
	\draw (-2,0)--(.6,-.7)--(2,0);
	\draw[dashed] (-2,0)--(2,0);
	\end{tikzpicture}
	\caption{3-2 Pachner move.}
	\label{fig:pachner}
	\end{center}
\end{figure}
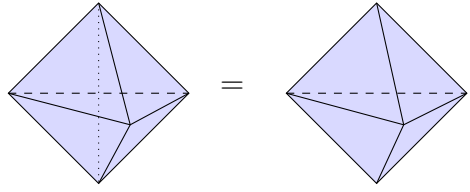

\subsection{Integral kernel}
Let us calculate the integral kernel of the operator $\pto$
\begin{align*}
\langle x_0 , x_2 \mid \pto \mid x_1, x_3 \rangle \equiv \pto f(x,y) = \int \langle x,y \mid \pto \mid u,v \rangle f(u,v) dudv,
\end{align*} where $\pto$ is the operator given by \eqref{eq:operator}.
\begin{align*}
\langle x_0 , x_2 \mid \pto \mid x_1, x_3 \rangle &= \langle x_0 , x_2 \mid e^{2\pi i \mom_1 \pos_2} \psi\left(\pos_1-\pos_2+\mom_2\right) \mid x_1, x_3 \rangle \\
&= e^{x_2 \frac{\partial}{\partial x_0}}\langle x_0 , x_2 \mid \psi\left(\pos_1-\pos_2+\mom_2\right) \mid x_1, x_3 \rangle \\
&= \langle x_0 +x_2 , x_2 \mid \psi\left(\pos_1-\pos_2+\mom_2\right) \mid x_1, x_3 \rangle \\
&=\int \langle x_0 +x_2 , x_2 \mid e^{2 \pi i (\pos_1-\pos_2+\mom_2)y} \mid x_1, x_3 \rangle \tilde{\psi}(y) \:dy \\
&=\int e^{2 \pi i yx_1}\delta(x_1-x_0-x_2)\langle x_2 \mid e^{2 \pi i (\mom_2-\pos_2)y } \mid x_3 \rangle \tilde{\psi}(y) \:dy \\
&=\int e^{2\pi i x_1 y}\delta(x_1-x_0-x_2)\tilde{\psi}(y)e^{-2\pi i x_3 y}\langle  x_2+y\mid x_3 \rangle \: dy \\
&=\int e^{2 \pi i x_1 y } \delta(x_1-x_0-x_2)\tilde{\psi}(y)e^{-2\pi i x_3 y}\delta(x_2+y-x_3)e^{\pi i y^2} \:dy \\
&=e^{2\pi i x_1(x_3-x_2)}\delta(x_1-x_0-x_2)\tilde{\psi}(x_3-x_2)e^{-2\pi i x_3(x_3-x_2)+\pi i (x_3-x_2)^2} \\
&= \delta(x_1-x_0-x_2)\tilde{\psi}'(x_3-x_2)e^{2\pi i x_0(x_3-x_2)},
\end{align*}
where 
\begin{equation*}
\tilde{\psi}'(x):=\tilde{\psi}(x)e^{-\pi i x^2}, \quad \text{and} \quad\tilde{\psi}(x):=\int_{\R}\psi(y)e^{2\pi i xy} \: dy.
\end{equation*}

\subsection{Positively and negatively oriented tetrahedra} In an oriented triangulated $3$-manifold there are two possibilities for the orientation of tetrahedra. The orientation follows from Figure \ref{fig:orient}. To a negatively oriented tetrahedron the integral kernel associated to it in the geometric interpretation is the complex conjugate of that of a positively oriented tetrahedron.

\begin{figure}[h]
\begin{center}
	\begin{tikzpicture}[scale=.7,>=stealth,baseline]
	\draw[fill=blue!15] (0,0) circle (2);
	\draw[->] (1,0)--(2,0) (0,0)--(1,0);
	\draw[->](-120:1)--(-120:2) (0,0)--(-120:1);
	\draw[->](120:1)--(120:2) (0,0)--(120:1);
	\draw[->](2,0) arc (0:60:2);
	\draw[->](2,0) arc (0:-60:2);
	\draw[->](-120:2) arc (-120:-180:2);
	\draw (60:2) arc (60:180:2);
	\draw (-60:2) arc (-60:-120:2);
	\draw (0,0) node[left]{$0$};
	\draw (2,0) node[right]{$1$};
	\draw (-120:2) node[anchor=north east]{$2$};
	\draw (120:2) node[anchor=south east]{$3$};
	\draw (0,-2.5) node[anchor=north]{Positively oriented tetrahedron.};
	\end{tikzpicture}\qquad\qquad
	\begin{tikzpicture}[scale=.7,>=stealth,baseline]
	\draw[fill=blue!15] (0,0) circle (2);
	\draw[->] (1,0)--(2,0) (0,0)--(1,0);
	\draw[->](-120:1)--(-120:2) (0,0)--(-120:1);
	\draw[->](120:1)--(120:2) (0,0)--(120:1);
	\draw[->](2,0) arc (0:60:2);
	\draw[->](2,0) arc (0:-60:2);
	\draw[->](120:2) arc (120:180:2);
	\draw (60:2) arc (60:120:2);
	\draw (-60:2) arc (-60:-180:2);
	\draw (0,0) node[left]{$0$};
	\draw (2,0) node[right]{$1$};
	\draw (-120:2) node[anchor=north east]{$3$};
	\draw (120:2) node[anchor=south east]{$2$};
	\draw (0,-2.5) node[anchor=north]{Negatively oriented tetrahedron.};
	\end{tikzpicture}
	\caption{Orientations on tetrahedra.}
	\label{fig:orient}
	\end{center}
\end{figure}

\subsection{Charged Tetrahedral Operators and Pentagon Identity}\label{CTOPI}
To ensure that the Fourier integral is absolutely convergent charges on the operator $\pto$ are introduced. For any positive real $a$ and $c$ such that $b:=\frac{1}{2}-a-c$ is also positive, define the charged $\pto$-operators 
\begin{equation}
\pto(a,c):=e^{-\pi i c_{\bq}^2(4(a-c)+1)/6}e^{4\pi i c_{\bq} (c\pos_2-a\pos_1)} \pto e^{-4\pi i c_{\bq}(a\mom_2+c\pos_2)}
\end{equation}
and
\begin{equation*}
\bar\pto(a,c):=e^{\pi i c_{\bq}^2(4(a-c)+1)/6}e^{-4\pi i c_{\bq} (a\mom_2+c\pos_2)} \bar\pto e^{4\pi i c_{\bq}(c\pos_2-a\pos_1)}
\end{equation*}
where $\bar\pto := \pto^{-1}$ and $c_{\bq}:=\frac{i}{2}(\bq+\bq^{-1})$.
We have the equality
\begin{equation*}
\pto(a,c) = e^{2\pi i \mom_1\pos_2}\psi_{a,c}(\pos_1-\pos_2+\mom_2)
\end{equation*} where
\begin{equation*}
\psi_{a,c}(x):=\psi(x-2c_{\bq}(a+c))e^{-4\pi i c_{\bq}a(x-c_{\bq}(a+c))}e^{-\pi i c_{\bq}^2(4(a-c)+1)/6}.
\end{equation*}
The Fourier transformation formula for Faddeev's quantum dilogarithm \eqref{eq:Fourier} leads to the identities
\begin{align*}
\tpsi{a}{c}{x} &=e^{-\frac{\pi i }{12}}\psi_{c,b}(x), \\
\overline{\psi_{{a},{c}}({x})}&=e^{-\frac{\pi i }{6}}e^{\pi i x^2}\psi_{c,a}(-x) = e^{-\frac{\pi i }{12}}\tilde{\psi}_{b,c}(-x), \\
\overline{\tpsi{a}{c}{x}}&=e^{\frac{\pi i }{12}}\overline{\psi_{c,b}(x)}=e^{-\frac{\pi i }{12}}e^{\pi i x^2}\psi_{b,c}(-x).
\end{align*}
From the formulas above we obtain that
\begin{align}
\langle x_0,x_2 \mid \pto(a,c)\mid x_1,x_3 \rangle &= \delta(x_1-x_0-x_2)\tpsi{a}{c}{x_3-x_2}e^{2\pi i x_0(x_3-x_2)}, \\
\langle x,y \mid \bar \pto(a,c)\mid u,v \rangle &= \overline{\langle u,v \mid \pto(a,c)\mid x,y \rangle}.
\end{align}

\begin{prop}[Andersen--Kashaev \cite{AK1}]
	The charged pentagon identity is satisfied
	\begin{equation}
	\pto_{12}(a_4,c_4)\pto_{13}(a_2,c_2)\pto_{23}(a_0,c_0) = e^{\pi i c_{\bq}^2 P_e/3} \pto_{23}(a_1,c_1)\pto_{12}(a_3,c_3),
	\end{equation}
	where $$ P_e=2(c_0+a_2+c_4)-\frac{1}{2}$$ and $a_0,a_1,a_2,a_3,a_4,c_0,c_1,c_2,c_3,c_4 \in \R$ are such that 
	$$a_1=a_0+a_2, \: a_3=a_2+a_4, \: c_1=c_0+a_4,  \:c_3=a_0+c_4, \: c_2=c_1+c_3.$$
\end{prop}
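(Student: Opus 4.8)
The plan is to deduce the charged identity from the uncharged pentagon relation \eqref{eq:five} by treating the charges as conjugating factors and carefully collecting the resulting scalar phases. From the definition of $\pto(a,c)$, each charged operator factors as a scalar $\mu(a,c)=e^{-\pi i c_{\bq}^2(4(a-c)+1)/6}$ times $e^{4\pi i c_{\bq}(c\pos_2-a\pos_1)}\,\pto\,e^{-4\pi i c_{\bq}(a\mom_2+c\pos_2)}$, with the convention that in $\pto_{ij}(a,c)$ the indices $1,2$ are replaced by $i,j$. First I would substitute this factorisation into both sides of the asserted equation and pull the scalars $\mu$ out in front, so that what remains is a product of the bare operators $\pto_{12},\pto_{13},\pto_{23}$ (resp. $\pto_{23},\pto_{12}$) interleaved with exponentials that are linear in the position and momentum operators.

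Next I would bring the three bare operators together. This is the heart of the matter: to slide a linear exponential past a bare $\pto$ one uses the intertwining relations \eqref{tetraop}, which encode precisely how $\pto$ conjugates the Weyl generators $\sfu_j,\sfv_j$ (equivalently, the action on $\pos_j,\mom_j$), while adjacent exponentials are merged by the Heisenberg commutation relations, each merge producing a central scalar. Once all conjugating exponentials have been moved to the two ends, the surviving core on the left is $\pto_{12}\pto_{13}\pto_{23}$, to which I apply \eqref{eq:five} to replace it by $\pto_{23}\pto_{12}$, matching the bare core on the right.

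It then remains to compare the two sides. Here the linear constraints $a_1=a_0+a_2,\ a_3=a_2+a_4,\ c_1=c_0+a_4,\ c_3=a_0+c_4,\ c_2=c_1+c_3$ are exactly what is needed for the leftover exponentials in $\pos_j$ and $\mom_j$ on the two sides to coincide, so that the whole discrepancy collapses to a scalar. That scalar I would evaluate by combining the five prefactors $\mu(a_\bullet,c_\bullet)$ with all the central phases accumulated during the merges; the quadratic dependence on $c_{\bq}$ is further fed by the inversion/Fourier relation \eqref{eq:Fourier} for Faddeev's quantum dilogarithm, used implicitly in passing between $\psi_{a,c}$ and $\tilde\psi_{a,c}$. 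A bookkeeping of these contributions should collapse to $e^{\pi i c_{\bq}^2 P_e/3}$ with $P_e=2(c_0+a_2+c_4)-\tfrac12$.

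The step I expect to be the main obstacle is the conjugation of the momentum exponentials through the bare $\pto$: whereas the relevant position combinations transform linearly under \eqref{tetraop} (for instance $\pto^{-1}\sfu_1\sfu_2\,\pto=\sfu_1$), the relation $\pto^{-1}(\sfu_1\sfv_2+\sfv_1)\pto=\sfv_1$ shows that a bare momentum generator does not map to a single exponential, so the computation must be arranged so that only the linearly transforming combinations actually occur, or so that the nonlinear pieces cancel between the two $\pto$-factors they straddle. Equally delicate is tracking every central phase so that the Gaussian and $c_{\bq}^2$ contributions assemble into exactly $P_e$; as a consistency check, and as an alternative route, one may instead compute the three-fold integral kernels of both sides from the kernel formula for $\pto(a,c)$, reduce the delta-functions, and verify the resulting Gaussian-weighted integral identity directly from the uncharged five term identity \eqref{eq:fiveterm}.
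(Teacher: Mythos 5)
The paper itself offers no proof of this proposition: it is stated with attribution to \cite{AK1} and the argument is deferred entirely to that reference, where it is established by reducing the operator identity to an integral identity for the charged functions $\tilde\psi'_{a,c}$ via the kernel formula $\langle x_0,x_2\mid\pto(a,c)\mid x_1,x_3\rangle=\delta(x_1-x_0-x_2)\tpsi{a}{c}{x_3-x_2}e^{2\pi i x_0(x_3-x_2)}$ and the Fourier transformation formula \eqref{eq:Fourier}. Measured against that, your primary route has a genuine gap precisely at the step you flag but do not resolve. The factorisation $\pto(a,c)=\mu(a,c)\,e^{4\pi i c_{\bq}(c\pos_2-a\pos_1)}\,\pto\,e^{-4\pi i c_{\bq}(a\mom_2+c\pos_2)}$ is not a conjugation, so assembling the bare core $\pto_{12}\pto_{13}\pto_{23}$ forces you to push the right-hand charge factor of $\pto_{12}(a_4,c_4)$, which contains $e^{-4\pi i c_{\bq}a_4\mom_2}$, through the bare $\pto_{23}$. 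But the intertwining relations \eqref{tetraop} only control the combinations $\pos_1+\pos_2\mapsto\pos_1$, $\mom_1+\mom_2\mapsto\mom_2$, $\mom_1+\pos_2\mapsto\mom_1+\pos_2$ and the exponentiated relation for $\sfv_1$; the single generator $\mom_2$ (in the slots of $\pto_{23}$, the analogue of $\mom_1$) is \emph{not} among them, and its conjugate by $\pto$ is a genuinely non-Gaussian operator involving $\psi'/\psi$. Your hope that ``the nonlinear pieces cancel between the two $\pto$-factors they straddle'' is exactly the assertion that needs proof, and nothing in the proposal supplies it; as written, the phase bookkeeping never even starts because the objects being rearranged are not central.

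The fallback you mention only as a consistency check --- computing the integral kernels of both sides, reducing the delta functions, and verifying the resulting identity for $\tilde\psi'_{a_\bullet,c_\bullet}$ --- is in fact the proof, and it is the one carried out in \cite{AK1}. There the five linear constraints among the $a_i,c_i$ arise from matching the arguments of the three $\tilde\psi'$ factors on the left with the two on the right inside a Gaussian integral, and the prefactor $e^{\pi i c_{\bq}^2 P_e/3}$ is what survives after combining the five normalisations $\mu(a_\bullet,c_\bullet)$ with the Gaussian phases produced by \eqref{eq:Fourier}. I would recommend promoting that route from a footnote to the actual argument, since the operator-conjugation route cannot be completed with the relations available in this paper.
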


\subsection{Partition function}
For a tetrahedron $T=[v_0,v_1,v_2,v_3]$ with ordered vertices $v_0,v_1,v_2,v_3$, we define its $\sign$
\begin{equation*}
\sign(T)=\sign(\det(v_1-v_0,v_2-v_0,v_3-v_0)).
\end{equation*}
For $(T,\alpha,x)$ an oriented tetrahedron with shape structure $\alpha$ in state $x$, define the partition function taking values in the space of tempered distributions by the formula
\begin{equation}
Z_{\hbar}(T,\alpha,x) := \begin{cases}
\langle x_0,x_2 \mid \pto(a,c) \mid x_1,x_3 \rangle, & \text{if } \sign(T)=1 \\
\langle x_1,x_3 \mid \bar{\pto}(a,c) \mid x_0,x_2 \rangle, & \text{if } \sign(T)=-1.
\end{cases}
\end{equation}
where $$x_i := x(\partial_i T)$$ and $$a=\frac{1}{2\pi} \alpha_T(T,e_{01}), \quad c=\frac{1}{2\pi} \alpha_T(T,e_{03}).$$

For a closed oriented triangulated pseudo 3-manifold $X$ with shape structure $\alpha$, we associate the partition function 
\begin{align}\label{eq:partition}
Z_{\hbar}(X,\alpha) := \int_{x \in \R^{\Delta_2(X)}} \prod_{T\in \Delta_3(X)} Z_{\hbar}(T,\alpha,x) \:dx.
\end{align}

\begin{thm}[Andersen--Kashaev \cite{AK1}]
	If $H_2(X\backslash \Delta_0(X),\Z)=0$, then the quantity $\abs{Z_{\hbar}(X,\alpha)}$ is well defined in the sense that the integral is absolutely convergent, and 
	\begin{enumerate}
		\item it depends on only the gauge reduced class of $\alpha$;
		\item it is invariant under $2-3$ Pachner moves.
	\end{enumerate}
\end{thm}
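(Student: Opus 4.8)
The plan is to establish the three assertions in turn, using the explicit integral kernel of $\pto(a,c)$ computed above together with the decay properties of Faddeev's quantum dilogarithm and the charged pentagon identity proved above.

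\textbf{Absolute convergence.} First I would insert the kernel formula into \eqref{eq:partition}. Each positively oriented tetrahedron contributes a factor
\[
\delta(x_1-x_0-x_2)\,\tpsi{a}{c}{x_3-x_2}\,e^{2\pi i x_0(x_3-x_2)},
\]
and each negatively oriented one its complex conjugate, where $x_i=x(\partial_i T)$. Taking the product over $T\in\Delta_3(X)$ and collecting the delta functions yields a system of linear constraints on the face variables $\mgd{x_f}_{f\in\Delta_2(X)}$. I would show that the hypothesis $H_2(X\backslash\Delta_0(X),\Z)=0$ forces these constraints to be independent enough that no residual $\delta(0)$ survives, so that the delta functions eliminate a maximal set of variables and restrict the integration to the affine solution space $V$ with finite Jacobian. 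On $V$ the modulus of the integrand is the product over tetrahedra of $\abs{\tpsi{a}{c}{\ell_T}}$, the oscillatory factors $e^{2\pi i x_0(x_3-x_2)}$ having modulus one, where $\ell_T$ is the affine form induced by $x_3-x_2$. Since $c_{\bq}=\tfrac{i}{2}(\bq+\bq^{-1})$ is purely imaginary, the charge-dependent prefactors in $\psi_{a,c}$ become real exponentials, and one checks that $\abs{\tpsi{a}{c}{t}}$ decays exponentially as $t\to\pm\infty$ with rates governed by the positive numbers $a,c$ and $b=\tfrac12-a-c$. The decisive point is coercivity: a direction in $V$ along which every $\ell_T$ remained bounded would represent a nonzero class in $H_2(X\backslash\Delta_0(X),\Z)$, so the hypothesis forces exponential decay in every direction of $V$ and hence absolute convergence.

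\textbf{Gauge invariance (1).} The gauge group is generated by the total dihedral angle around each internal edge acting through the Neumann--Zagier bracket. I would show that such a generator, localized at an internal edge $e$, conjugates the charged operators $\pto(a,c)$ of the tetrahedra incident to $e$ by a common operator built from $\pos$ and $\mom$ with imaginary coefficients (again since $c_{\bq}$ is imaginary). In the full contraction defining $Z_{\hbar}(X,\alpha)$ for a closed $X$ these conjugating factors are attached to shared faces and cancel in pairs, so that the only surviving effect of the gauge flow is on the scalar prefactors $e^{\mp\pi i c_{\bq}^2(4(a-c)+1)/6}$ of the incident tetrahedra. As $c_{\bq}^2=-\tfrac14(\bq+\bq^{-1})^2$ is real, these have modulus one, whence $\abs{Z_{\hbar}(X,\alpha)}$ depends only on the gauge-reduced class of $\alpha$.

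\textbf{Pachner invariance (2).} Here I would localize to the region affected by the move. A $2\!-\!3$ move replaces two tetrahedra glued along a common face by three tetrahedra sharing a new internal edge $e$, and conversely. After decomposing unity across the shared faces exactly as in the geometric interpretation of the pentagon identity, the local contribution on the two sides equals the integral kernel of the two operator products appearing in the charged pentagon identity,
\[
\pto_{12}(a_4,c_4)\pto_{13}(a_2,c_2)\pto_{23}(a_0,c_0)=e^{\pi i c_{\bq}^2 P_e/3}\,\pto_{23}(a_1,c_1)\pto_{12}(a_3,c_3).
\]
I would verify that the charge relations $a_1=a_0+a_2$, $c_1=c_0+a_4$, and so on, are precisely the additivity relations satisfied by the dihedral angles under the move, so that the shape structures on the two sides correspond and the new edge $e$ carries the correct total angle. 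The factor $e^{\pi i c_{\bq}^2 P_e/3}$ is a pure phase, again because $c_{\bq}^2$ is real; hence applying the identity and integrating out the remaining internal face variables leaves $\abs{Z_{\hbar}(X,\alpha)}$ unchanged.

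\textbf{Main obstacle.} I expect the delicate step to be the absolute-convergence argument, namely the translation of the topological hypothesis $H_2(X\backslash\Delta_0(X),\Z)=0$ into the two analytic statements that no $\delta(0)$ arises from the product of delta functions and that the forms $\ell_T$ are simultaneously coercive on the reduced domain $V$. Identifying $V$ and the collection of $\ell_T$ with the relevant cellular chain groups of $X\backslash\Delta_0(X)$, and matching the obstruction to coercivity with $H_2$, is where the real work lies; once the charged pentagon identity and the kernel formula are in hand, the gauge and Pachner steps are essentially bookkeeping.
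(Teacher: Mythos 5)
The paper does not actually prove this theorem: it is quoted from \cite{AK1} and used as a black box, so there is no internal proof to compare your attempt against. Your sketch follows the strategy of the proof in \cite{AK1}: resolve the delta functions into linear constraints, use the two-sided exponential decay of $\tilde{\psi}'_{a,c}$ coming from the positivity of $a$, $b$, $c$ (with the charge prefactors becoming real exponentials because $c_{\bq}$ is imaginary), identify the directions of non-decay on the reduced domain with $H_2(X\backslash\Delta_0(X),\R)$, and obtain gauge and Pachner invariance of the modulus from the reality of $c_{\bq}^2$ and from the charged pentagon identity with its angle-additivity relations. You have correctly isolated the genuinely hard step --- the homological identification that rules out both residual $\delta(0)$ factors and bounded directions --- but as written that step remains a roadmap rather than an argument, so the proposal should be read as a correct outline of the proof in \cite{AK1} rather than a self-contained proof.
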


The definition of the partition function \eqref{eq:partition} can be extended to manifolds having boundary eventually giving rise to a TQFT, see \cite{AK1}. 

\subsection{Invariants of knots in 3-manifolds}
By considering one-vertex ideal triangulations of complements of hyperbolic knots in compact oriented closed 3-manifolds, we obtain knot invariants. 

Another possibility is to consider a one-vertex Hamiltonian triangulation (H-triangulation) of pairs (a closed 3-manifold $M$, a knot $K$ in $M$), i.e., a one-vertex triangulation of $M$, where the knot is represented by one edge, with degenerate shape structures, where the weight on the knot approaches zero and where simultaneously the weights on all other edges approach the balanced value $2\pi$. This limit by itself is divergent as a simple pole (after analytic continuation to complex angles) in the weight of the knot, but the residue at this pole is a knot invariant which is a direct analogue of Kashaev's invariants \cite{K6j}, which were at the origin of the hyperbolic volume conjecture.

In \cite{AK1} the first author and Rinat Kashaev have set forth the following conjecture:
\begin{conj}[Andersen and Kashaev \cite{AK1}]\label{conje}
	Let $M$ be a closed oriented 3-manifold. For any hyperbolic knot $K\subset M$, there exists a smooth function $J_{M,K}(\hbar,x)$ on $\R_{>0}\times\R$ which has the following properties.
	\begin{enumerate}[(1)]
		\item
		For any fully balanced shaped ideal triangulation $X$ of the complement of $K$ in $M$, there exists a gauge invariant real linear combination of dihedral angles $\lambda$, a (gauge non-invariant) real quadratic polynomial of dihedral angles $\phi$ such that 
		\begin{equation*}
		Z_{\hbar}(X)=e^{i\frac{\phi}{\hbar}}\int_{\R}J_{M,K}(\hbar,x)e^{-\frac{x\lambda}{\sqrt{\hbar}}} dx.
		\end{equation*}
		\item\label{kanmandet}
		For any one vertex shaped $H$-triangulation $Y$ of the pair $(M,K)$ there exists a real quadratic polynomial of dihedral angles $\phi$ such that 
		\begin{equation*}
		\lim_{\omega_Y\to \tau}\Phi_{\bq}\left(\frac{\pi-\omega_Y(K)}{2\pi i\sqrt{\hbar}}\right)Z_{\hbar}(Y)=e^{i\frac{\phi}{\hbar}-i\pi/12}J_{M,K}(\hbar,0),
		\end{equation*}
		where $\tau:\Delta_1(Y)\to\R$ takes the value $0$ on the knot $K$ and the value $2\pi$ on all other edges.
		\item
		The hyperbolic volume of the complement of $K$ in $M$ is recovered as the limit 
		\begin{equation*}
		\lim_{\hbar\to 0}2\pi\hbar \log{\abs{J_{M,K}(\hbar,0)}}=-\vol(M\backslash K).
		\end{equation*}
	\end{enumerate}
\end{conj}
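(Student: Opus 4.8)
The plan is to reduce each of the three assertions to an explicit computation with the charged tetrahedral kernels and then to treat part (3) by a formal saddle-point analysis; before attempting anything general I would first run the entire scheme on a concrete example such as the knot $6_1$, where every integration can be tracked by hand. For part (1), fix a fully balanced shaped ideal triangulation $X$ of $M\setminus K$ and write $Z_{\hbar}(X)$ as in \eqref{eq:partition}, a product over tetrahedra of the kernels $\langle x_0,x_2\mid\pto(a,c)\mid x_1,x_3\rangle=\delta(x_1-x_0-x_2)\,\tpsi{a}{c}{x_3-x_2}\,e^{2\pi i x_0(x_3-x_2)}$ (and their complex conjugates on negatively oriented tetrahedra), integrated over the face variables $x\in\R^{\Delta_2(X)}$. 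Each Dirac delta removes one integration variable, so after carrying these out the partition function collapses to an integral over the remaining free variables of a product of the factors $\tilde\psi'_{a,c}$ times a Gaussian exponential whose phase is quadratic in the states and in the angles. The remaining task is to organize this expression into the asserted shape: the purely angle-dependent quadratic phase is extracted as $e^{i\phi/\hbar}$; the dependence on one distinguished gauge-invariant angular combination $\lambda$ enters only through a linear coupling $e^{-x\lambda/\sqrt{\hbar}}$ to one remaining integration variable $x$; and integrating everything else over the other variables defines the smooth function $J_{M,K}(\hbar,x)$. Absolute convergence at every stage is supplied by positivity of the dihedral angles, exactly as in the Andersen--Kashaev convergence theorem quoted above.

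For part (2), I would repeat this computation for a one-vertex H-triangulation $Y$ of $(M,K)$ and take the degenerate limit $\omega_Y\to\tau$ in which the weight on the knot edge tends to $0$ while all other weights tend to $2\pi$. In this limit one factor of $\Phi_{\bq}$ (equivalently one $1/\psi$) in the integrand is driven onto the pole of the quantum dilogarithm, producing the simple divergence described after the theorem; multiplying by the explicit prefactor $\Phi_{\bq}\!\left(\tfrac{\pi-\omega_Y(K)}{2\pi i\sqrt{\hbar}}\right)$ cancels this pole and leaves a finite residue. Identifying that residue with $e^{i\phi/\hbar-i\pi/12}J_{M,K}(\hbar,0)$, for the very same $J$ produced in part (1) now evaluated at $x=0$, is the point at which the ideal-triangulation and H-triangulation computations are matched, and is what ties the two constructions together.

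Finally, for part (3), I would feed the integral representation of $J_{M,K}(\hbar,0)$ into a formal stationary-phase analysis as $\hbar\to0$ (equivalently $b\to0$). Using the semiclassical asymptotics $\log\Phi_{\bq}(z)\sim\tfrac{1}{2\pi i b^2}\,\Li_2(-e^{2\pi b z})$, the integrand takes the form $\exp\!\left(V/(2\pi\hbar)\right)$ for a potential $V$ assembled from one dilogarithm per tetrahedron. Setting the gradient of $V$ to zero gives the saddle-point equations, which I expect to reproduce Thurston's gluing (edge-balancing) equations, so that the geometric saddle corresponds to the complete hyperbolic structure; the imaginary part of $V$ at that saddle then equals the sum of Lobachevsky functions of the dihedral angles, that is $-\vol(M\setminus K)$, which yields the stated limit after applying $2\pi\hbar\log|\cdot|$. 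The hardest and least routine step is precisely this last one: rigorously justifying the saddle-point evaluation---selecting the geometric critical point among all solutions of the gluing equations, deforming the real integration contour so that it passes through that point along a steepest-descent path, and controlling the Gaussian and subleading fluctuations---together with matching the critical value to the hyperbolic volume through the Neumann--Zagier data. This is why, as the abstract signals, the argument is a \emph{formal} stationary-phase analysis rather than a complete proof of the conjecture in full generality.
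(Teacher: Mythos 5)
The statement you are trying to prove is stated in the paper as a \emph{conjecture}, and the paper does not prove it in general: what it actually does is verify the three assertions for specific pairs, principally $(S^3,6_1)$ (with $(S^3,4_1)$ and $(S^3,5_2)$ treated along the way), by exactly the programme you describe --- writing $Z_{\hbar}$ as a product of charged kernels $\delta(x_1-x_0-x_2)\,\tilde\psi'_{a,c}(x_3-x_2)e^{2\pi i x_0(x_3-x_2)}$, integrating out the delta functions, shifting variables to isolate a purely angle-dependent quadratic phase $e^{i\phi/\hbar}$ and the linear coupling to the gauge-invariant combination, renormalising the H-triangulation by the prefactor $\Phi_{\bq}\bigl(\tfrac{\pi-\omega_Y(K)}{2\pi i\sqrt{\hbar}}\bigr)$ to cancel the simple pole as the knot-edge weight goes to $0$, and finally running a formal saddle-point analysis on the resulting potential $V$ built from one $\Li_2$ per tetrahedron. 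So your approach coincides with the paper's; the honest caveat you add about the stationary-phase step being formal is the same caveat the paper makes. The one point to flag is that your write-up reads in places as if this scheme proves the general statement, whereas steps you gloss over --- that the collapsed integral can always be organised into the asserted shape with a single gauge-invariant $\lambda$, that the ideal-triangulation and H-triangulation computations always produce the \emph{same} function $J_{M,K}$, and above all the rigorous selection of the geometric saddle and the steepest-descent contour deformation --- are precisely what remains open; for the cases treated, the paper even resorts to numerical identification of the dominant critical point. Your proposal is therefore a correct account of the paper's verification strategy for examples, not a proof of the conjecture, and it should be presented as such.
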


\begin{thm}
	Conjecture \ref{conje} is true for the pair $(S^3,6_1)$ with 
	\begin{equation}
	J_{S^3,6_1}(\hbar,x)=\chi_{6_1}(x).
	\end{equation}
	The function $\chi_{6_1}(x)$ is defined to be:
	\begin{equation*}
	\chi_{6_1}(x)=\int_{\R^2} \frac{e^{2\pi i (x^2+\frac{1}{2}y^2 +2xy)e^{4\pi i c_{\bq}z}}}{\Phi_{\bq}(x+y)\Phi_{\bq}(x+z+c_{\bq})\Phi_{\bq}(y)\Phi_{\bq}(z-x-y)} dydz.
	\end{equation*}
\end{thm}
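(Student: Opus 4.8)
The plan is to reduce all three parts of Conjecture~\ref{conje} to explicit finite-dimensional integrals assembled from the charged tetrahedral kernels. First I would fix a concrete combinatorial model: a one-vertex H-triangulation $Y$ of the pair $(S^3,6_1)$ in which the knot is a single edge, together with the associated ideal triangulation $X$ of the complement $S^3\setminus 6_1$. Recording the vertex ordering and the sign of each tetrahedron, the partition function \eqref{eq:partition} becomes an explicit product of kernels $\langle x_0,x_2\mid\pto(a,c)\mid x_1,x_3\rangle$ and their complex conjugates, integrated over the state variables attached to the $2$-faces.

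Next I would evaluate these integrals. Each kernel carries a factor $\delta(x_1-x_0-x_2)$, and the product of these delta functions collapses almost all of the integration variables; rewriting the surviving factors $\tpsi{a}{c}{\cdot}$ in terms of Faddeev's quantum dilogarithm $\Phi_{\bq}$ and collecting the Gaussian and $c_{\bq}$-dependent phases carried by the charges should turn the remainder into the stated two-dimensional integral $\chi_{6_1}$. For part~(\ref{kanmandet}) I then track the single $\Phi_{\bq}$-factor attached to the knot edge, whose argument is governed by $\omega_Y(K)$: as $\omega_Y\to\tau$ it acquires a simple pole, and multiplying by $\Phi_{\bq}\!\pa{\tfrac{\pi-\omega_Y(K)}{2\pi i\sqrt{\hbar}}}$ and applying the residue of $\Phi_{\bq}$ that follows from \eqref{eq:functional} cancels the pole, fixes the remaining free variable to $0$, and leaves $e^{i\phi/\hbar-i\pi/12}\chi_{6_1}(0)$. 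Running the same reduction for the ideal triangulation $X$ produces a three-dimensional integral over $(x,y,z)$ in which the shape enters only through an overall phase $e^{i\phi/\hbar}$ and a linear factor $e^{-x\lambda/\sqrt{\hbar}}$; performing the inner $(y,z)$-integral recovers $\chi_{6_1}(x)$ and hence $Z_\hbar(X)=e^{i\phi/\hbar}\int_{\R}\chi_{6_1}(x)e^{-x\lambda/\sqrt{\hbar}}\,dx$, which is part~(1) with $J_{S^3,6_1}=\chi_{6_1}$.

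For part~(3) I would substitute the semiclassical asymptotics of $\Phi_{\bq}$, expressed through the Euler dilogarithm $\Li_2$, into $\chi_{6_1}(0)$ and apply a formal stationary phase analysis as $\hbar\to 0$. The saddle-point equations should reproduce Thurston's gluing equations for $S^3\setminus 6_1$, and evaluating the resulting potential at the geometric solution recovers $-\vol(S^3\setminus 6_1)$ through the Lobachevsky function, giving $\lim_{\hbar\to0}2\pi\hbar\log\abs{\chi_{6_1}(0)}=-\vol(S^3\setminus 6_1)$.

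The main difficulty is twofold. The bulk of the labour lies in the reduction of the second paragraph: propagating the charges and the $c_{\bq}$-dependent quadratic phases through every delta-function integration so that the surviving integral matches $\chi_{6_1}$ exactly, including the precise overall phase $\phi$ and the linear form $\lambda$; this is technical but essentially deterministic. The genuinely substantive step is the stationary phase analysis for part~(3), where one must identify the relevant critical point with the complete hyperbolic structure and check that its critical value equals the hyperbolic volume — this is where geometric input, rather than formal manipulation, is required, and it is the part I expect to be the real obstacle.
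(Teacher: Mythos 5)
Your proposal follows essentially the same route as the paper: reduce the charged tetrahedral kernels of the H-triangulation (and of the ideal triangulation, which the paper defers to \cite{JJKN}) by integrating out the delta functions and Gaussian phases to reach the explicit double integral, renormalise by the $\Phi_{\bq}$-factor on the knot edge to extract part (2), and apply a formal stationary phase analysis with the semiclassical expansion of $\Phi_{\bq}$ for part (3). The one point worth noting is that the paper settles the step you rightly flag as the substantive one — matching the critical value of the potential to $-\vol(S^3\setminus 6_1)$ — only by numerically solving the saddle-point equation and identifying the dominant root, rather than by an a priori identification with the complete hyperbolic structure.
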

See \cite{JJKN} for a calculation of the invariant of an ideal triangulation of the complement of the knot $6_1.$

\section{New formulation}
In this section we recall the new formulation of the Teichmüller TQFT introduced by Andersen and Kashaev in \cite{AK2}.  
\subsection{States and Bolzmann weights}
Let $T\subset \R^3$ be a tetrahedron with shape structure $\alpha_T$ and vertex ordering mapping
$$ v : \mgd{0,1,2,3}\to\Delta_0(T).$$
A \emph{state} of a tetrahedron $T$ is a map $x\colon \Delta_1(T)\to\R$. Pictorially, a positive tetrahedron $T$ in state $x$ looks as follows
\begin{equation*}
\begin{tikzpicture}[scale=.9,>=stealth,baseline]
\draw[fill=blue!15] (0,0) circle (2);
\draw[->] (1,0)--(2,0) (0,0)--(1,0);
\draw[->](-120:1)--(-120:2) (0,0)--(-120:1);
\draw[->](120:1)--(120:2) (0,0)--(120:1);
\draw[->](2,0) arc (0:60:2);
\draw[->](2,0) arc (0:-60:2);
\draw[->](-120:2) arc (-120:-180:2);
\draw (60:2) arc (60:180:2);
\draw (-60:2) arc (-60:-120:2);
\draw (0,0) node[left]{$0$};
\draw (2,0) node[right]{$1$};
\draw (-120:2) node[anchor=north east]{$2$};
\draw (120:2) node[anchor=south east]{$3$};
\draw(-2,0) node[anchor=south,fill=white]{$x_{23}$};
\draw(1,0) node[anchor=west,fill=white]{$x_{01}$};
\draw(-120:1.5) node[fill=white]{$x_{02}$};
\draw(75:2) node[fill=white]{$x_{13}$};
\draw(-75:2) node[fill=white]{$x_{12}$};
\draw(120:1.4) node[fill=white]{$x_{03}$};
\end{tikzpicture}
\end{equation*}
More generally, a {state} of a triangulated pseudo 3-manifold $X$ is a map
\begin{equation*}
y:\Delta_{1}(X)\to \R.
\end{equation*}

For any state $y$ define the \emph{Boltzmann weight}
\begin{equation*}
B(T,x)=g_{\alpha_1,\alpha_3}(y_{02}+y_{13}-y_{03}-y_{12},y_{02}+y_{13}-y_{01}-y_{23})
\end{equation*}
if $T$ is positive and complex conjugate otherwise. Here $y_{ij}\equiv y(v_iv_j), \alpha_i\equiv \alpha_{T}(v_0v_i)/2\pi$,
\begin{equation}
g_{a,c}(s,t) = \sum_{m\in \Z} \tpsi{a}{c}{s+m}e^{\pi i t (s+2m)}. 
\end{equation}

\begin{thm}[Andersen--Kashaev \cite{AK1}]\label{thm:AK}
	Let X be a levelled shaped triangulated oriented pseudo 3-manifold. Then, the quantity
	\begin{equation}\label{eq:stateint}
	Z_{\hbar}^{\text{new}}(X) := e^{\pi i l_X/{4\hbar}} \int_{[0,1]^{\Delta_{1}(X)}} \left( \prod_{T\in \Delta_3(X)}B\left(T,y\big\vert_{\Delta_1(T)}\right) \right) \: dx
	\end{equation}
	admits an analytic continuation to a meromorphic function of the complex shapes, which is invariant under all shaped $2-3$ and $3-2$ Pachner moves (along balanced edges).
\end{thm}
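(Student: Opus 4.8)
The plan is to prove the two assertions — meromorphic continuation and Pachner invariance — in turn, obtaining the local move identity for the new Boltzmann weights by taking matrix elements of the operator charged pentagon identity stated in the preceding Proposition and absorbing its scalar anomaly into the level contribution $e^{\pi i l_X/4\hbar}$.

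\textbf{Meromorphic continuation.} First I would record the analytic structure of the building block $g_{a,c}(s,t)$. Each factor $\tpsi{a}{c}{\cdot}$ is built from Faddeev's quantum dilogarithm $\Phi_{\bq}$, which is meromorphic with an explicit lattice of poles and zeros and with Gaussian decay along the real axis in the two asymptotic directions. From the closed form of $\psi_{a,c}$ and the Fourier transform $\tilde\psi$, one reads off rapid decay of $\tpsi{a}{c}{s+m}$ as $m\to\pm\infty$ for $(a,c)$ in the open positivity region, so the defining series for $g_{a,c}$ converges and is quasi-periodic in $s$ of period one up to the explicit factor $e^{-\pi i t}$; this is exactly what legitimizes restricting the state integral to the compact cube $[0,1]^{\Delta_1(X)}$. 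Since $\Phi_{\bq}$ is meromorphic in the complexified dihedral angles and the series converges locally uniformly in $(a,c)$ away from the pole lattice, $g_{a,c}$ is meromorphic in the complex shapes; integrating over the compact cube then preserves meromorphy by Morera's theorem together with locally uniform convergence, while the prefactor $e^{\pi i l_X/4\hbar}$ is entire in the level. This gives the first assertion.

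\textbf{Reduction to a local identity.} Since a $3$-$2$ move is the inverse of a $2$-$3$ move, it suffices to treat a single $2$-$3$ move. I would localize to a bipyramid inside $X$ formed by two tetrahedra glued along a common face and hold the states on all its boundary edges fixed. On the two-tetrahedron side all nine edges are boundary edges, so the local contribution is the bare product $B(T_1,\cdot)\,B(T_2,\cdot)$ with no internal integration; the move introduces exactly one new internal edge — the axis joining the two apices — whose state is integrated over $[0,1]$ on the three-tetrahedron side. Invariance of $Z_{\hbar}^{\text{new}}$ thus reduces to the single local identity
\begin{equation*}
B(T_1,\cdot)\,B(T_2,\cdot) = C\int_0^1 B(T_3,\cdot)\,B(T_4,\cdot)\,B(T_5,\cdot)\, ds,
\end{equation*}
with all boundary edge-states fixed and $C$ a state-independent constant. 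Because the move is along a balanced edge, the new internal edge carries total angle $2\pi$; this is precisely the linear constraint on the charges ($a_1=a_0+a_2$, $a_3=a_2+a_4$, $c_1=c_0+a_4$, $c_3=a_0+c_4$, $c_2=c_1+c_3$) demanded by the charged pentagon identity.

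\textbf{The local identity (main obstacle).} The heart of the argument is to derive the displayed identity from the operator relation $\pto_{12}(a_4,c_4)\pto_{13}(a_2,c_2)\pto_{23}(a_0,c_0) = e^{\pi i c_{\bq}^2 P_e/3}\,\pto_{23}(a_1,c_1)\pto_{12}(a_3,c_3)$. The bridge is that $g_{a,c}$ is a periodized partial transform of the operator kernel $\langle x_0,x_2\mid\pto(a,c)\mid x_1,x_3\rangle=\delta(x_1-x_0-x_2)\,\tpsi{a}{c}{x_3-x_2}\,e^{2\pi i x_0(x_3-x_2)}$: passing from the face-states of the original formulation to the edge-states of the new one replaces one continuous position integration by the discrete sum over $m\in\Z$, which is exactly the periodization in the definition of $g_{a,c}$. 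I would therefore take matrix elements of both sides of the operator pentagon in this mixed position/discrete representation, so that the continuous decomposition-of-unity integrations collapse, leaving one internal-edge integration on the three-tetrahedron side and none on the two-tetrahedron side, reproducing the local identity; the state-independent anomaly $e^{\pi i c_{\bq}^2 P_e/3}$ is then identified with the jump in $e^{\pi i l_X/4\hbar}$ across the move, so that tracking the levelling data makes the full $Z_{\hbar}^{\text{new}}$ genuinely invariant rather than invariant up to a phase. The hard part will be the analytic bookkeeping of this transfer — justifying the interchange of the periodization sums with the collapsed continuous integrations, controlling the Gaussian factors $e^{\pi i(\cdots)^2}$ that accompany the change between position and momentum representations, and checking that the quadratic and linear charge data combine to give exactly the claimed level jump. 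Once the local identity is established as an equality of meromorphic functions of the complex shapes (proved first in the positivity region, then continued), it propagates to the full integral by Fubini on the compact cube and, with the continuation above, yields invariance under all $2$-$3$ and, by inversion, $3$-$2$ moves along balanced edges, completing the proof.
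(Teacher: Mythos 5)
This theorem is quoted in the paper from \cite{AK1,AK2} without proof, so there is no in-paper argument to compare yours against; I can only assess the proposal on its own terms and against the strategy of the cited source. Your outline is the right one: meromorphy from the pole/decay structure of $\Phi_{\bq}$ plus locally uniform convergence of the periodizing sum, reduction of Pachner invariance to a single local bipyramid identity with one internal-edge integration on the three-tetrahedron side and none on the two-tetrahedron side, and derivation of that identity from the charged pentagon relation with the anomaly $e^{\pi i c_{\bq}^2P_e/3}$ absorbed into the level (which is exactly what the ``levelled'' datum $l_X$ is for, since $c_{\bq}^2$ is proportional to $1/\hbar$).

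The genuine gap is that the step you yourself flag as the main obstacle is the entire content of the theorem, and as written it is a description of what must be proved rather than a proof. The passage from the face-state kernels $\langle x_0,x_2\mid \pto(a,c)\mid x_1,x_3\rangle$ to the edge-state weights $g_{a,c}$ is not merely ``replacing one continuous integration by a discrete sum'': it is conjugation by the Weil--Gel'fand--Zak transform, which the paper itself uses elsewhere (in the $4_1$ computation and in Section \ref{appmcg}) and which is the precise intertwiner between $\calS(\R)$ and sections of a line bundle over the torus. Without exhibiting that intertwining explicitly you cannot justify (i) why the three continuous decomposition-of-unity integrations on the three-tetrahedron side collapse to a single $[0,1]$ integration over the new edge while the one integration on the two-tetrahedron side collapses to none, (ii) why the delta functions $\delta(x_1-x_0-x_2)$ in the kernels are consistent with the edge-state combinatorics of the bipyramid, and (iii) why the balancing condition on the new edge is exactly the linear charge constraint of the charged pentagon. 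A ``mixed position/discrete representation'' is not a well-defined object until the transform is specified, and the quadratic Gaussian factors you mention do not automatically cancel --- their bookkeeping is where the quadratic part of the level jump comes from. So the proposal is a correct roadmap matching the proof in \cite{AK2}, but the local identity, which is the theorem, remains unestablished.
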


\begin{conjecture}
	The proposed model in Theorem \ref{thm:AK} is equivalent to the Teichmüller TQFT from \cite{AK1}.  
\end{conjecture}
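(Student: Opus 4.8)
The plan is to produce an explicit transformation carrying the edge-state integral \eqref{eq:stateint} of the new formulation into the face-state integral \eqref{eq:partition} of the original one, and to confirm it in the cases where both sides are computable in closed form. The structural input is that both partition functions are assembled from the same elementary block $\tpsi{a}{c}{\cdot}$: it appears directly as the integral kernel $\langle x_0,x_2\mid\pto(a,c)\mid x_1,x_3 \rangle$ in the original formulation, while in the new one it is repackaged into the Boltzmann weight through
\begin{equation*}
g_{a,c}(s,t)=\sum_{m\in\Z}\tpsi{a}{c}{s+m}\,e^{\pi i t(s+2m)}=e^{\pi i ts}\sum_{m\in\Z}\tpsi{a}{c}{s+m}\,e^{2\pi i tm}.
\end{equation*}
One recognises $g_{a,c}$ as a periodisation of $\tpsi{a}{c}{\cdot}$ in its first slot, folding a function on $\R$ down to one on $\R/\Z$. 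This is exactly the mechanism needed to reconcile the two measures: the integration over $[0,1]^{\Delta_1(X)}$ in \eqref{eq:stateint} together with the integer sums hidden in each factor $g_{a,c}$ should unfold, by the standard periodisation--unfolding identity, into the single $\R$-integration of \eqref{eq:partition}.

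First I would make the correspondence local. For one positive tetrahedron I would insert the definition of $g_{\alpha_1,\alpha_3}$ into $B(T,y)$ and match the two edge combinations $s=y_{02}+y_{13}-y_{03}-y_{12}$ and $t=y_{02}+y_{13}-y_{01}-y_{23}$ against the face data of $\langle x_0,x_2\mid\pto(a,c)\mid x_1,x_3 \rangle$: the slot $s$ should play the role of the argument $x_3-x_2$ of $\tpsi{a}{c}{\cdot}$, and $t$ the role of the conjugate variable entering the Gaussian phase $e^{2\pi i x_0(x_3-x_2)}$. Up to that phase and the delta function $\delta(x_1-x_0-x_2)$, a single edge-weight $B(T,y)$ is then a Fourier series in the variables on which the face- and edge-state descriptions disagree, with the face-kernel $Z_{\hbar}(T,\alpha,x)$ as its Fourier data.

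Next I would globalise. The delta functions carried by the tetrahedral kernels impose precisely the linear relations among face variables that make the passage from edge coordinates to face coordinates invertible once $H_2(X\setminus\Delta_0(X),\Z)=0$ --- exactly the hypothesis under which the original invariant of \eqref{eq:partition} is defined. Assembling the local identities over all $T\in\Delta_3(X)$, the integer sums and the $[0,1]$-integrations unfold to the $\R$-integration of \eqref{eq:partition}, and the accumulated Gaussian and framing contributions should collect into the level prefactor $e^{\pi i l_X/4\hbar}$.

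The hard part will be this global step, and in particular producing the normalisation $e^{\pi i l_X/4\hbar}$ in a manner manifestly independent of the triangulation, with the correct treatment of the level $l_X$ and of the gauge-dependent quadratic phase. Controlling that prefactor uniformly over all triangulations --- rather than move by move --- is the genuine obstacle, which is why I would not expect a clean proof in full generality at this stage. Instead I would establish the statement by carrying out the reduction explicitly in the worked cases, in particular the one-vertex H-triangulation of $(S^3,6_1)$, where both $Z_{\hbar}$ and $Z_{\hbar}^{\mathrm{new}}$ collapse to the same integral $\chi_{6_1}(x)$, thereby verifying the equivalence in every case considered.
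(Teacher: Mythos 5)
Your proposal is essentially the paper's own approach: the statement is left as a conjecture in general, and the authors establish it only by direct case-by-case computation (for $(S^3,3_1)$, $(S^3,4_1)$, $(S^3,5_2)$ and $(S^3,6_1)$), using exactly the unfolding mechanism you describe, whereby the integer sums in the periodised Boltzmann weights $g_{a,c}$ combine with the $[0,1]$-integrations to reproduce the $\R$-integrals of the original formulation. Your identification of the global normalisation and the uniform control of the prefactor as the genuine obstruction to a general proof matches the state of affairs in the paper.
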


\begin{thm}
	The new formulation of the Teichmüller TQFT is equivalent to the original formulation for the pairs 
	$(S^3,3_1)$, $(S^3,4_1)$, $(S^3,5_2)$ and $(S^3,6_1)$.
\end{thm}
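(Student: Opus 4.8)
The plan is to establish the equivalence separately for each of the four listed pairs by direct computation, since the assertion concerns these specific knots rather than a general statement. For each pair $(S^3,K)$ I would fix the same one-vertex H-triangulation $X$ of $(S^3,K)$ that is used in the original-formulation calculations (for $6_1$ this is exactly the triangulation underlying the preceding theorem), record its combinatorial data --- the tetrahedra in $\Delta_3(X)$, the face and edge identifications, and the charge assignments $a_i=\tfrac{1}{2\pi}\alpha_T(T,e_{01})$, $c_i=\tfrac{1}{2\pi}\alpha_T(T,e_{03})$ dictated by the shape structure --- and then write out both partition functions explicitly in these coordinates.

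In the original formulation each positively oriented tetrahedron contributes the kernel $\langle x_0,x_2 \mid \pto(a,c)\mid x_1,x_3 \rangle=\delta(x_1-x_0-x_2)\,\tpsi{a}{c}{x_3-x_2}\,e^{2\pi i x_0(x_3-x_2)}$ (and its conjugate for negative orientation), so that integrating the product over the face states in $\R^{\Delta_2(X)}$ collapses all the delta functions and leaves a finite-dimensional integral over $\R$ of a product of $\tpsi{a}{c}{\cdot}$ factors against a Gaussian exponential. In the new formulation each tetrahedron instead contributes the Boltzmann weight $B(T,y)=g_{\alpha_1,\alpha_3}(s,t)$ with $s,t$ the indicated edge-state differences, and expanding $g_{a,c}(s,t)=\sum_{m\in\Z}\tpsi{a}{c}{s+m}\,e^{\pi i t(s+2m)}$ turns the integral over $[0,1]^{\Delta_1(X)}$ in \eqref{eq:stateint} into a sum over $\Z^{\Delta_3(X)}$ together with an integral over the torus. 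The comparison then rests on the observation that both formulations are assembled from the very same building block $\tpsi{a}{c}{\cdot}$.

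The heart of the argument is to show that the integer sums carried by the functions $g_{a,c}$, together with the compact $[0,1]$ integrations of the new formulation, reproduce exactly the integrations over all of $\R$ appearing in the original formulation. Two ingredients enter: an unfolding of the $s$-type integer shifts in $\tpsi{a}{c}{s+m}$, and the orthogonality of the characters $e^{2\pi i m t}$ built into $g_{a,c}$, which enforces the integrality constraints relating edge and face variables. I would first verify this correspondence on a single tetrahedron, matching the original kernel to one summand of $B(T,y)$ under the linear dictionary $s=y_{02}+y_{13}-y_{03}-y_{12}$, $t=y_{02}+y_{13}-y_{01}-y_{23}$, and then propagate it through the glued triangulation, keeping careful track of the quadratic prefactors and of the level term $e^{\pi i l_X/4\hbar}$.

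The main obstacle I expect is precisely this bookkeeping: pinning down the correct linear dictionary between the face variables of \cite{AK1} and the edge variables of \cite{AK2}, and confirming that the accumulated Gaussian and level factors match without leaving a residual phase. The $6_1$ case is the most laborious, having the largest triangulation, but here the original-formulation answer $\chi_{6_1}$ is already supplied by the preceding theorem, so only the new-formulation computation and the final comparison remain; for $3_1$, $4_1$ and $5_2$ both sides must be computed from scratch, which is routine but lengthy.
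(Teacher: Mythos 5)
Your proposal matches the paper's own proof in essentials: the paper establishes the theorem by exactly this kind of case-by-case direct computation, expanding each Boltzmann weight $g_{a,c}$ as a sum of $\tpsi{a}{c}{\cdot}$ terms, using character orthogonality in the $[0,1]$ integrations to kill redundant sums, unfolding the remaining $\sum_{m}\int_{[0,1]}$ into integrals over $\R$, and matching the result (via the Weil--Gel'fand--Zak transform for the knot complements, and directly against the original-formulation kernel computation for the H-triangulations) with the functions $\chi_K$ of the original theory. The only cosmetic difference is that you would organize the bookkeeping through a single-tetrahedron dictionary first, whereas the paper simply carries out the full glued computation for each triangulation.
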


\section{Calculations of specific knot complements}
In the following calculations we encode an oriented triangulated pseudo $3$-manifold $X$ into a diagram where a tetrahedron $T$ is represented by an element 
\begin{equation}
T \qquad = \qquad \begin{tikzpicture}[scale=.5,baseline=5]
\draw[very thick] (0,0)--(3,0);
\draw (0,0)--(0,1);\draw (1,0)--(1,1);\draw (2,0)--(2,1);\draw (3,0)--(3,1);
\end{tikzpicture}
\end{equation}
where the vertical segments, ordered from left to right, correspond to the faces $\partial_0 T$,$\partial_1 T$, $\partial_2 T$,$\partial_3 T$ respectively. When we glue tetrahedron along faces, we illustrate this by joining the corresponding vertical segments. 

We will further use the notation $$ \nu_{a,c}:=e^{-\pi i c_{\bq^2}(4(a-c)+1)/6}.$$
\subsection{The complement of the figure-8-knot}
Let $X$ be  the following oriented triangulated pseudo 3-manifold, 
\begin{equation}
\begin{tikzpicture}[scale=.5,baseline=5]
\draw[very thick] (0,0)--(3,0);
\draw[very thick] (0,1)--(3,1);
\draw(0,0)--(1,1); 
\draw(0,1)--(1,0);
\draw(2,0)--(3,1);
\draw(3,0)--(2,1);
\end{tikzpicture}
\end{equation}
which represented the usual diagram for the complement of the figure eight knot.
Choosing an orientation, the diagram consists of one positive tetrahedron $T_{+}$ and one negative $T_{-}$. $\partial{X}=\emptyset$ and combinatorially we have 
$\Delta_{0}(X)=\{ * \}$, $\Delta_{1}(X)=\{e_0,e_1\}$. The gluing of the tetrahedra is vertex order preserving which means that edges are glued together in the following manner.
\begin{align*}
e_0=x_{01}^{+}=x_{03}^{+}=x_{23}^{+}=x_{02}^{-}=x_{12}^{-}=x_{13}^{-}=:x,\\
e_1=x_{02}^{+}=x_{13}^{+}=x_{12}^{+}=x_{01}^{-}=x_{03}^{-}=x_{23}^{-}=:y.
\end{align*}
That this diagram represents the complement of the figure eight know means that the topological space $X\backslash \{*\}$ is homeomorphic to the complement of the figure-eight knot. 
The set $\Delta_{3}^1(X)$ consists of the elements $(T_{\pm},e_{j,k})$ for $0\leq j< k \leq 3$. We fix a shape structure
\begin{equation*}
\alpha_{X}:\Delta_{3}^1(X) \to \R_{+}
\end{equation*}
by the formulae 
\begin{equation*}
\alpha_{X}(T_{\pm},e_{0,1})=2\pi a_{\pm}, \quad \alpha_{X}(T_{\pm},e_{0,2}) = 2\pi b_{\pm}, \quad \alpha_{X}(T_{\pm},e_{0,3})=2\pi c_{\pm},
\end{equation*}
where $a_{\pm}+b_{\pm}+c_{\pm}=\frac{1}{2}$.
This result in the following weight functions
\begin{equation*}
\omega_{X}(e_0)=2a_++c_++2b_-+c_-, \quad \omega_{X}(e_1)= 2b_++c_++2a_-+c_-.
\end{equation*}
In the completely balanced case these equations correspond to 
$$ a_+-b_+=a_--b_-. $$
The Boltzmann weights are given by the functions
\begin{align*}
B\left(T_{+},x_{\vert_{\Delta_{1}(T_+)}}\right) &=g_{a_+,c_+}(y-x,2(y-x)), \\
B\left(T_{-},x_{\vert_{\Delta_{1}(T_-)}}\right) &=\overline{g_{a_-,c_-}(x-y,2(x-y))}.
\end{align*}
We calculate the partition function for the Teichmüller TQFT using the new formulation
\begin{align*}
Z^{\text{new}}_{\hbar}(X)=&
\int_{[0,1]^2}\sum_{m,n\in \Z} \tpsi{a_+}{c_+}{y-x+m}\overline{\tpsi{a_-}{c_-}{x-y+n}}e^{4\pi i (y-x)(m+n)} \: dxdy \\
=&\,\int_{[0,1]}\sum_{m,n\in \Z} \tpsi{a_+}{c_+}{y+m}\overline{\tpsi{a_-}{c_-}{-y+n}}e^{4\pi i y(m+n)} dy \\
=&\,\sum_{m,n \in \Z}\int_{[m,m+1]} \tpsi{a_+}{c_+}{y}\overline{\tpsi{a_-}{c_-}{-y+m+n}}e^{4\pi i (y-m)(m+n)} dy \\
=&\,\sum_{p \in \Z}\int_{\R} \tpsi{a_+}{c_+}{y}\overline{\tpsi{a_-}{c_-}{-y+p}}e^{4\pi i yp} dy \\
=&\,e^{-\frac{\pi i}{6}}\sum_{p\in \Z}\int_{\R} \ps{c_+}{b_+}{y}\ps{b_-}{c_-}{y-p}e^{\pi i (y-p)^2} e^{4\pi i yp} dy \\
=&\,e^{-\frac{\pi i}{6}}\sum_{p\in \Z}\int_{\R} \psi(y-2c_{\bq}(c_++b_+)) \psi(y-p-2c_{\bq}(b_-+c_-))e^{\pi i y^2}e^{\pi i p^2} e^{2\pi i yp} \\
&\,\times e^{-4\pi i c_{\bq}c_+(y-c_{\bq}(c_++b_+))}e^{-4\pi ic_{\bq}b_-(y-p-c_{\bq}(b_-+c_-))}\\
&\, \times e^{-\pi i (4(c_+-b_+)+1)/6}e^{-\pi i (4(b_--c_-)+1)/6}dy.
\end{align*}
We set $Y=y-2c_{\bq}(c_++b_+)$. Assuming that we are in the completely balanced case we have that
$$-b_--c_-+c_++b_+ =  -b_++b_-. $$
Furthermore we have
$ y^2=Y^2+4c_{\bq}^2(c_++b_+)^2 +4c_{\bq}Y(c_++b_+).$
Implementing this we get the following expression
\begin{align*}
Z^{\text{new}}_{\hbar}(X)=&\,\nu_{c_+,b_+}\nu_{b_-,c_-}e^{-\frac{\pi i}{6}}\sum_{p\in \Z}\int_{\R} \psi(Y-p-2c_{\bq}(b_+-b_-))\psi(Y)\\
&\,\times e^{\pi i(Y^2+4c_{\bq}^2(c_++b_+)^2 +4c_{\bq}Y(c_++b_+))}e^{\pi i p^2}\\
&\;\times e^{2\pi i (Y+2c_{\bq}(c_++b_+))p}\\
&\,\times e^{-4\pi i c_{\bq}c_+(Y+c_{\bq}(c_++b_x))}e^{-4\pi ic_{\bq}b_-(Y-p-c_{\bq}(b_-+c_-)+2c_{\bq}(c_++b_+))} dY\\
=& \,\nu_{c_+,b_+}\nu_{b_-,c_-}e^{-\frac{\pi i}{6}}\sum_{p\in \Z}\int_{\R} \frac{1}{\Phi_{\bq}(Y-p-2c_{\bq}(b_+-b_-))}\frac{1}{\Phi_{\bq}(Y)}\\
&\,\times e^{\pi i Y^2}e^{\pi i p^2}
e^{-4\pi i c_{\bq} Y (-c_+-b_++c_++b_-)} e^{2\pi i Yp}\\
&\times \,e^{-4\pi i c_{\bq} p (-(c_++b_+)-b_-) }\\
&\times \,e^{4\pi i c_{\bq}^2((c_++b_+)^2-c_+(c_++b_+)-b_-(b_-+c_--2(c_++b_+))} dY.
\end{align*}
Now set 
\begin{equation}\label{u}
u=2c_{\bq}(b_+-b_-)
\end{equation}
and 
\begin{equation}\label{v}
v=2b_-+c_-=2b_++c_+,
\end{equation}
and use the formula
$$ \Phi_{\bq}(z)\Phi_{\bq}(-z)=\zeta_{inv}^{-1}e^{\pi i z^2},$$
together with the calculation
$$b_-+b_++c_+=b_-+b_+-2b_++2b_-+c_-=-(b_+-b_-)+(2b_-+c_-). $$
to get that
\begin{align*}
Z^{\text{new}}_{\hbar}(X)=& \,\nu_{c_+,b_+}\nu_{b_-,c_-}\zeta_{inv}e^{-\frac{\pi i}{6}}\sum_{p\in \Z}\int_{\R} \frac{\Phi_{\bq}(p+u-Y)}{\Phi_{\bq}(Y)}e^{-\pi i(Y^2+u^2+p^2-2Yu-2Yp+2up)}\\
&\,\times e^{\pi i Y^2} e^{\pi i p^2} 
e^{2\pi i  Y u}e^{2\pi i Yp} dY\\
&\times \,e^{-2\pi i pu}e^{2\pi i pv}\\
&\times \,e^{4\pi i c_{\bq}^2((c_++b_+)^2-c_+(c_++b_+)-b_-(b_-+c_--2(c_++b_+))}.
\end{align*}
Using the balance condition and formulas \eqref{u} and \eqref{v} we get the equality
\begin{align*}
&-4\pi i c_{\bq}^2\{(c_++b_+)^2+b_-(-b_--c_-+2c_++2b_+)+c_+(c_++b_+)\} = \\
&-4\pi i c_{\bq}^2 \{(-(b_+-b_1)^2) -c_+b_++c_+b_-+b_-c_+-b_-c_- \} = \\
&-2\pi i c_{\bq} \{ -(c_++2b_-)u \} +\pi i u^2 = \\
&-2\pi i c_{\bq} \{ -(2b_-+c_-)u+2(b_+-b_-))u \} +\pi i u^2= \pi i (uv-u^2).
\end{align*}
We get the following expression for the partition function
\begin{align*}
Z^{\text{new}}_{\hbar}(X)=& \,\nu_{c_+,b_+}\nu_{b_-,c_-}\zeta_{inv}e^{-\frac{\pi i}{6}}\sum_{p\in \Z}\int_{\R} \frac{\Phi_{\bq}(p+u-Y)}{\Phi_{\bq}(Y)}e^{-\pi iu^2}\\
&\times \,
e^{4\pi i  Y u}e^{4\pi i Yp}e^{-4\pi i pu}e^{2\pi i pv} e^{\pi i (uv-u^2)}dY\\
=&\,\nu_{c_+,b_+}\nu_{b_-,c_-}\zeta_{inv}e^{-\frac{\pi i}{6}}\sum_{p\in \Z}\int_{\R} \frac{\Phi_{\bq}(p+u-Y)}{\Phi_{\bq}(Y)}\\
&\times \,
e^{4\pi i  Y u}e^{4\pi i Yp}e^{-4\pi i pu}e^{2\pi i pv} e^{\pi i uv}e^{-2\pi iu^2}dY\\
=& \, \nu_{c_+,b_+}\nu_{b_-,c_-}\zeta_{inv}e^{-\frac{\pi i}{6}}\sum_{p\in \Z}\int_{\R} \frac{\Phi_{\bq}(p+u-Y)}{\Phi_{\bq}(Y)} e^{2\pi i (u+p)(2Y-u-p)} dY e^{\pi i v(2p+u)}. 
\end{align*}
Using the Weil-Gel'fand-Zak transform we see that the partition function has the form
\begin{align*}
Z^{\text{new}}_{\hbar}(X) = \nu_{c_+,b_+} \nu_{b_-,c_-}\zeta_{inv}e^{-\frac{\pi i}{6}}W(\chi_{4_1})(u,v).
\end{align*}
Where the function $\chi_{4_1}(x)=\int_{\R-i0}\frac{\Phi_{\bq}(x-y)}{\Phi_{\bq}(y)}e^{2\pi i x(2y-x)}dy.$ The function $\chi_{4_1}(x)$ is exactly the function $J_{S^3,4_1}(\hbar,x)$ from \cite[Thm. 5]{AK1}. It should be noted that this result is connected to Hikami's invariant. Andersen and Kashaev observes in \cite{AK1} that the expression 
\begin{equation*}
\frac{1}{2\pi \bq}\chi_{4_1}\left(-\frac{u}{\pi \bq}, \frac{1}{2}\right),
\end{equation*} 
where $\chi_{4_1}(x,\lambda)=\chi_{4_1}(x)e^{4\pi i c_{\bq}\lambda}$ 
is equal to the formal derived expression in \cite{HI2}.

\subsection{One vertex H-triangulation of the figure-8-knot}
Let $X$ be represented by the diagram 
\begin{equation}
\begin{tikzpicture}[scale=0.5,baseline=17]
\draw[very thick] (0,0)--(0,3);
\draw[very thick] (1,3/2)--(4,3/2);
\draw[very thick] (5,0)--(5,3);
\draw(1,3/2)..controls (1,2) and (2,2)..(2,3/2);
\draw(3,3/2)..controls (3,2) and (3/2,5/2)..(0,2);
\draw(4,3/2)..controls (4,2) and (4.5,3)..(5,3);
\draw(0,3)..controls (1/2,3) and (4.5,2)..(5,2);
\draw(0,1)..controls (1/2,1) and (4.5,0)..(5,0);
\draw(0,0)..controls (1/2,0) and (4.5,1)..(5,1);
\end{tikzpicture}
\end{equation}
where the figure-eight knot is represented by the edge of the central tetrahedron connecting the maximal and next to maximal vertices. 
Choosing an orientation, the diagram consists of two positive tetrahedra $T_1,T_3$ and one negative $T_2$. $\partial{X}=\emptyset$ and combinatorially we have 
$\Delta_{0}(X)=\{ * \}$, $\Delta_{1}(X)=\{x,y,z,x'\}$. The gluing of the tetrahedra is vertex order preserving which means that edges are glued together in the following manner.
\begin{align*}
x&=x_{01}^{1}=x_{03}^{1}=x_{02}^{2}=x_{02}^{3}=x_{03}^{3}, \\
y&=x_{02}^{1}=x_{12}^{1}=x_{13}^{1}=x_{01}^{2}=x_{03}^{2}=x_{23}^{2}=x_{23}^{2}, \\
z&=x_{23}^{1}=x_{12}^{2}=x_{13}^{2}=x_{12}^{3}=x_{13}^{3},\\
x'&=x_{01}^{3}.
\end{align*}
This results in the following equations for the dihedral angles when we balance all but one edge
\begin{equation*}
b_1+a_3=b_2, \quad a_1=a_2+a_3.
\end{equation*}
In the limit where we let $a_3\to 0$ we get the equations
$$ b_1=b_2, \quad a_1=a_2.$$
The Boltzmann weights are given by the functions
\begin{align*}
&B\left(T_1,x_{\vert_{\Delta_{1}(T_1)}}\right)=g_{a_1,c_1}(y-x,2y-x-z), \\
&B\left(T_2,x_{\vert_{\Delta_{1}(T_2)}}\right)=\overline{g_{a_2,c_2}(x-y,x+z-2y)}, \\
&B\left(T_3,x_{\vert_{\Delta_{1}(T_3)}}\right)=g_{a_3,c_3}(0,x+z-x'-y).
\end{align*}
So we get that
\begin{align*}
Z^{\text{new}}_{\hbar}(X)=\int_{[0,1]^4}\sum_{m,n,l\in \Z} &\tpsi{a_1}{c_1}{y-x+m} e^{\pi i (2y-x-z)(y-x+2m)}\\
&\overline{\tpsi{a_2}{c_2}{x-y+n}}e^{-\pi i (x+z-2y)(x-y+2n)} \\
&\tpsi{a_3}{c_3}{l} e^{2\pi i (x+z-x'-y)l} \: dxdydzdx'.
\end{align*}
Integration over $x'$ removes one of the sums since $\int_{0}^{1}e^{-2\pi i x'l}dx'=\delta(l)$. Hence
\begin{align*}
Z^{\text{new}}_{\hbar}(X)=\tpsi{a_3}{c_3}{0}\int_{[0,1]^3}\sum_{m,n\in \Z} &\tpsi{a_1}{c_1}{y-x+m} e^{\pi i (2y-x-z)(y-x+2m)}\\
&\overline{\tpsi{a_2}{c_2}{x-y+n}}e^{-\pi i (x+z-2y)(x-y+2n)} \: dxdydz \\
=\tpsi{a_3}{c_3}{0}\int_{[0,1]^3}\sum_{m,n\in \Z} &\tpsi{a_1}{c_1}{y-x+m} \overline{\tpsi{a_2}{c_2}{x-y+n}}\\
&e^{2\pi i (2y-x)(m+n)}e^{-2\pi i z(m+n)} \: dxdydz.
\end{align*}
Now integration over $z$ gives $\int_0^1 e^{-2\pi i z(m+n)}dz=\delta(n+m).$ So the partition function takes the form

\begin{align*}
Z^{\text{new}}_{\hbar}(X)=\tpsi{a_3}{c_3}{0}\int_{[0,1]^2}\sum_{m\in \Z} &\tpsi{a_1}{c_1}{y-x+m} \overline{\tpsi{a_2}{c_2}{x-y-m}} \: dxdy,
\end{align*}
We make the shift $y \mapsto y+x$ to get the expression
\begin{align*}
Z^{\text{new}}_{\hbar}(X)&= \tpsi{a_3}{c_3}{0}\int_{[0,1]^2}\sum_{m\in \Z} \tpsi{a_1}{c_1}{y+m} \overline{\tpsi{a_2}{c_2}{-y-m}} \: dxdy \\
&= \tpsi{a_3}{c_3}{0}\int_{[0,1]}\sum_{m\in \Z} \tpsi{a_1}{c_1}{y+m} \overline{\tpsi{a_2}{c_2}{-y-m}} \: dy \\
&= \tpsi{a_3}{c_3}{0}\int_{\Z}\tpsi{a_1}{c_1}{y} \overline{\tpsi{a_2}{c_2}{-y}} \: dy \\
&= e^{-\frac{\pi i}{6}}\tpsi{a_3}{c_3}{0}\int_{\Z} \psi_{c_1,b_1}(y)\psi_{b_2,c_2}(y)e^{\pi i y^2}.
\end{align*}
We set $Y=y-2c_{\bq}(c_1+b_1)=y-c_{\bq}(1-2a_1)$. Assuming that we are in the case where all but one edge is balanced we have $a_1=a_2$  
$$ y^2=Y^2+c_{\bq}^2(1-2a_1)^2 +2c_{\bq}Y(1-2a_1).$$
Implementing this we get the following expression
\begin{align*}
Z^{\text{new}}_{\hbar}(X)= e^{-\frac{\pi i}{6}}\tpsi{a_3}{c_3}{0}\int_{\Z} &\psi(Y)\psi(Y)e^{\pi i (Y^2+c_{\bq}^2(1-2a_1)^2 +2c_{\bq} 
	Y(1-2a_1))} \\
&e^{-4\pi i c_{\bq} c_1(Y+c_{\bq}(1/2-a_1))}\nu_{c_1,b_1}\\
&e^{-4\pi i c_{\bq} b_2(Y+c_{\bq}(1/2-a_1))}\nu_{b_2,c_2} dy \\
=e^{-\frac{\pi i}{6}}\nu_{c_1,b_1}\nu_{b_2,c_2}\tpsi{a_3}{c_3}{0}\int_{\Z-0i} &\frac{1}{\Phi(Y)^2}e^{\pi i Y^2}\: dy \:e^{\frac{i\phi}{\hbar}}.
\end{align*}
This result corresponds exactly to the partition function in the original formulation, see \cite[Chap. 11]{AK1}.
I.e. in the limit where $a_3\to 0 $ we get the renormalised partition function
\begin{equation*}
\tilde{Z}^{\text{new}}_{\hbar}(X):=\lim_{a_3\to 0}\Phi_{\bq}(2c_{\bq}a_3-c_{\bq})Z^{\text{new}}_{\hbar}(X)=\frac{e^{-\pi i /12}}{\nu(c_3)}\chi_{4_1}(0).
\end{equation*}

\subsection{The complement of the knot $5_2$}
Let $X$ be represented by the diagram 
\begin{equation}
\begin{tikzpicture}[scale=.5,baseline=20]
\draw[very thick] (0,0)--(3,0);
\draw[very thick] (6,0)--(9,0);
\draw[very thick] (3,3)--(6,3);

\draw(3,0)..controls (3,1) and (6,1)..(6,0);
\draw(0,0)..controls (0,2) and (3,1)..(3,3);
\draw(1,0)..controls (1,2) and (4,1)..(4,3);
\draw(2,0)..controls (2,2.5) and (9,2.5)..(9,0);
\draw(5,3)..controls (5,1) and (8,2)..(8,0);
\draw(6,3)..controls (6,2) and (7,2)..(7,0);
\end{tikzpicture}
\end{equation}
Choosing an orientation the diagram consists of three positive tetrahedra. We denote $T_1,T_2,T_3$ the left, the right an top tetrahedra respectively. The combinatorial data in this case are $\Delta_{0}(X)=\mgd{*}$, $\Delta_1(X)=\mgd{e_0,e_1,e_2}$, $\Delta_2(X)=\mgd{f_0,f_1,f_2,f_3,f_4,f_5}$ and $\Delta_3(X)=\mgd{T_1,T_2,T_3}.$ 

The edges are glued in the following manner
\begin{align*}
e_0 & = x_{02}^{1}=x_{12}^{1}=x_{13}^{2}=x_{23}^{2}=x_{01}^{3}=x_{23}^{3} =: x, \\ 
e_1 & = x_{03}^{1}=x_{23}^{1}=x_{02}^{2}=x_{03}^{2}=x_{03}^{3}=x_{13}^{3}=x_{12}^{3}=:y \\
e_2 & = x_{01}^{1}=x_{13}^{1}=x_{01}^{2}=x_{12}^{2}=x_{02}^{3}=: z.\\
\end{align*}
We impose the condition that all edges are balanced which exactly corresponds to the two equations 
$$ 2a_3=a_1+c_2, \quad b_3=c_1+b_2.$$
The Bolzmann weights are given by the equations 
\begin{align*}
B\left(T_{1},x_{\vert_{\Delta_{1}(T_1)}}\right)&=g_{a_1,c_1}(z-y,x-y), \\
B\left(T_{2},x_{\vert_{\Delta_{1}(T_2)}}\right)&=g_{a_2,c_2}(x-z,y-z), \\
B\left(T_{2},x_{\vert_{\Delta_{1}(T_2)}}\right)&={g_{a_3,c_3}(z-y,z+y-2x)}. 
\end{align*}
We calculate the following function
\begin{align*}
Z^{\text{new}}_{\hbar}(X)=&\int_{[0,1]^3} \sum_{j,k,l \in \Z} g_{a_1,c_1}(z-y,x-y) g_{a_2,c_2}(x-z,y-z) \\
& \phantom{uuuuuuuuuu} \times g_{a_3,c_3}(z-y,z+y-2x) \; dxdydz \\
=& \int_{[0,1]^3}\sum_{j,k,l \in \Z} \tpsi{a_1}{c_1}{z-y+j}e^{\pi i (x-y)(z-y+2j)} \tpsi{a_2}{c_2}{x-z+k} \\
& e^{\pi i (y-z)(x-z+2k)}  \times \tpsi{a_3}{c_3}{z-y+l}e^{\pi i (z+y-2x)(z-y+2l)} \; dxdydz .
\end{align*}
Shift $x\mapsto x+z,$
\begin{align*}
Z^{\text{new}}_{\hbar}(X)=& \int_{[0,1]^3}\sum_{j,k,l \in \Z} \tpsi{a_1}{c_1}{z-y+j}e^{\pi i (x+z-y)(z-y+2j)} \tpsi{a_2}{c_2}{x+k}e^{\pi i (y-z)(x+2k)} \\ &\times \tpsi{a_3}{c_3}{z-y+l}e^{\pi i (y-2x-z)(z-y+2l)} \; dxdydz .
\end{align*}
Shift $z\mapsto z+y$
\begin{align*}
Z^{\text{new}}_{\hbar}(X)=& \int_{[0,1]^3}\sum_{j,k,l \in \Z} \tpsi{a_1}{c_1}{z+j}e^{\pi i (x+z)(z+2j)} \tpsi{a_2}{c_2}{x+k}e^{\pi i (-z)(x+2k)} \\ &\times \tpsi{a_3}{c_3}{z+l}e^{\pi i (-2x-z)(z+2l)} \; dxdydz \\
=& \int_{[0,1]^3}\sum_{j,k,l \in \Z} \tpsi{a_1}{c_1}{z+j} \tpsi{a_2}{c_2}{x+k} \tpsi{a_3}{c_3}{z+l}\\ &\times e^{\pi i (x+z)(z+2j)}e^{\pi i (-z)(x+2k)}e^{\pi i (-2x-z)(z+2l)} \; dxdydz \\
=& \int_{[0,1]^3}\sum_{j,k,l \in \Z} \tpsi{a_1}{c_1}{z+j} \tpsi{a_2}{c_2}{x+k} \tpsi{a_3}{c_3}{z+l}\\ &\times e^{2\pi i (x(j-2l-z)+z(j-k-l)} \; dxdydz.
\end{align*}
Integration over $y$ contributes nothing. We now shift $x\mapsto x-k$ and integrate over the interval $[-k,-k+1]$. 
\begin{align*}
Z^{\text{new}}_{\hbar}(X)=&\sum_{j,k,l\in \Z}\int_{[0,1]}\int_{[-k,-k+1]} \tpsi{a_1}{c_1}{z+j} \tpsi{a_2}{c_2}{x} \tpsi{a_3}{c_3}{z+l}\\ &\times e^{2\pi i ((x-k)(j-2l-z)+z(j-k-l)} \; dxdz \\
=& \sum_{j,k,l\in \Z} e^{2\pi i k(2l-j)}\int_{[0,1]}\int_{[-k,-k+1]}\tpsi{a_1}{c_1}{z+j} \tpsi{a_2}{c_2}{x} \tpsi{a_3}{c_3}{z+l}\\ &\times e^{2\pi i (x(j-2l-z)+z(j-l))} \; dxdz \\
=& \sum_{j,l\in \Z} \int_{[0,1]}\tpsi{a_1}{c_1}{z+j}  \tpsi{a_3}{c_3}{z+l}e^{2\pi i z(j-l)}\int_{\R}\tpsi{a_2}{c_2}{x} e^{-2\pi i x(z+2l-j)}dx\:dz \\
=& e^{-\frac{\pi i}{12}} \sum_{j,l \in \Z} \int_{[0,1]}\tpsi{a_1}{c_1}{z+j}  \tpsi{a_3}{c_3}{z+l} e^{2\pi i z(j-l)} \int_{\R} \ps{c_2}{b_2}{x} e^{-2\pi i x(z+2l-j)}dx\:dz \\
=& e^{-\frac{\pi i}{4}} \sum_{j,l\in \Z} \int_{[0,1]} \ps{c_1}{b_1}{z+j} \ps{c_3}{b_3}{z+l}\tilde{\psi}_{c_2,b_2}(z+2l-j) e^{2\pi i z(j-l)} dz.
\end{align*} 
We set $m=j-l$. 
\begin{align*}
Z^{\text{new}}_{\hbar}(X)=&e^{-\frac{\pi i}{4}} \sum_{l,m\in \Z} \int_{[0,1]} \ps{c_1}{b_1}{z+l+m} \ps{c_3}{b_3}{z+l}\tilde{\psi}_{c_2,b_2}(z+l-m) e^{2\pi i zm} \;dz \\
=& e^{-\frac{\pi i}{4}} \sum_{l,m\in \Z} \int_{[l,l+1]} \ps{c_1}{b_1}{z+m} \ps{c_3}{b_3}{z}\tilde{\psi}_{c_2,b_2}(z-m) e^{2\pi i zm} \;dz \\
=& e^{-\frac{\pi i}{3}}\sum_{m\in \Z} \int_{\R} \ps{c_1}{b_1}{z+m} \ps{c_3}{b_3}{z}\psi_{b_2,a_2}(z-m) e^{\pi i (z-m)^2}e^{2\pi i zm} \;dz \\
=& e^{-\frac{\pi i}{3}}\sum_{m\in \Z} \int_{\R} \ps{c_1}{b_1}{z+m} \ps{c_3}{b_3}{z}\psi_{b_2,a_2}(z-m) e^{\pi i (z^2+m^2)} \;dz. 
\end{align*}
\begin{align*}
Z^{\text{new}}_{\hbar}(X)=e^{-\frac{\pi i}{3}}\sum_{m\in \Z} \int_{\R} &\psi(z+m-c_{\bq}(1-2a_1))e^{-4\pi i c_{\bq}c_1\{(z+m)-c_{\bq}(1/2-a_1)\}} \\ &e^{-\pi i c_{\bq}^2(4(c_1-b_1)+1)/6}\\
&\psi(z-m-c_{\bq}(1-2c_2))e^{-4\pi i c_{\bq}c_1\{(z+m)-c_{\bq}(1/2-c_2)\}} \\ &e^{-\pi i c_{\bq}^2(4(c_1-b_1)+1)/6}\\
&\psi(z-c_{\bq}(1-2a_3))e^{-4\pi i c_{\bq}c_3\{(z+m)-c_{\bq}(1/2-a_3)\}}\\ &e^{-\pi i c_{\bq}^2(4(c_3-b_3)+1)/6}
e^{\pi i z^2}e^{\pi i p^2} \; dz.
\end{align*}
Set $w=z-c_{\bq}(1-2a_3)$
\begin{align*}
Z^{\text{new}}_{\hbar}(X)=e^{-\frac{\pi i }{3}} \sum_{m\in \Z} \int_{\R - i0} &\psi(w+m+2c_{\bq}(a_1-a_3)) \psi(w-m+2c_{\bq}(c_2-a_3)) \psi(w) \\ \times& e^{\pi i p^2} e^{\pi i w^2}e^{4\pi i c_{\bq}^2(1/2-a_3)^2}e^{4\pi i c_{\bq} w (1/2-a_3)} \\
& e^{-4\pi i c_{\bq}c_1\{ w+p+c_{\bq}(1-2a_3)-c_{\bq}(1/2-a_1) \}} \\
& e^{-4\pi i c_{\bq}c_1\{ w-p+c_{\bq}(1-2a_3)-c_{\bq}(1/2-c_2) \}} \\
& e^{-4\pi i c_{\bq}c_1\{ w+c_{\bq}(1/2-a_3) \}} \nu_{c_1,b_1}\nu_{b_2,a_2}\nu_{c_3,b_3} \; dw.
\end{align*}
Simplify by setting $u=2c_{\bq}(a_1-a_3).$ Using $c_1+b_2+c_3+a_3-1/2 = 0$ we are left with
\begin{align*}
Z^{\text{new}}_{\hbar}(X)= &e^{-\frac{\pi i }{3}} \sum_{m\in \Z} \int_{\R-i0} \psi(w+m+u)\psi(w-m-u)\psi(w) \\
& e^{\pi i w^2} e^{\pi i m^2} e^{4\pi i c_{\bq} (b_2-c_1)m} \\
& e^{-4\pi i c_{\bq}^2\{ -b_3^2-b_3c_3 +c_1(b_3+c_3)+b_2(b_3+c_3)+ (c_1-b_2)(a_1-a_3) \}}\; dw \\
&\nu_{c_1,b_1}\nu_{b_2,a_2}\nu_{c_3,b_3}.
\end{align*}
Let $v=2c_{\bq}(a_1-c_1+b_2-a_3),$ then
Note that 
$$ 4\pi i c_{\bq} (b_2-c_1) p = 4\pi i c_{\bq} (a_1-c_1+b_2-a_3) p -4\pi ic_{\bq}(a_3-a_1) =2\pi i (vp - up), $$
$$ -b_3^2-b_3c_3 +c_1(b_3+c_3)+b_2(b_3+c_3) = 0 ,$$
and
\begin{align*} -4 \pi i c_{\bq}^2 ((c_1-b_2)(a_1-a_3)) &= 4 \pi i c_{\bq}^2 ((a_1-c_1+b_2-a_3)(a_1-a_3) -(a_1-a_3)(a_1-a_3)) \\  &= \pi i (vu-u^2).
\end{align*}
\begin{align*}
Z^{\text{new}}_{\hbar}(X)=&e^{-\frac{\pi i}{3}} e^{\pi i uv} \sum_{m\in \Z} \int_{\R-i0} \frac{e^{\pi i w^2}e^{-\pi i m^2}e^{-\pi i u^2}}{\Phi_{\bq}(w+m+u)\Phi_{\bq}(w-m-u)\Phi_{\bq}(w)} dw e^{2 \pi i vm} \\
&\nu_{c_1,b_1}\nu_{b_2,a_2}\nu_{c_3,b_3}\\
=& e^{-\frac{\pi i}{3}} e^{\pi i uv} \sum_{m\in \Z} \int_{\R-i0}\frac{e^{\pi i (w+(u+m))(w-(u+m))}}{\Phi_{\bq}(w+m+u)\Phi_{\bq}(w-m-u)\Phi_{\bq}(w)} dw e^{2 \pi i vm} \\
&\nu_{c_1,b_1}\nu_{b_2,a_2}\nu_{c_3,b_3}\\
=&W\chi_{5_2}(u,v)\nu_{c_1,b_1}\nu_{b_2,a_2}\nu_{c_3,b_3}.
\end{align*}
Where $\chi_{5_2}(u)$ is given by the formula
$$ \chi_{5_2}(u)=e^{-\frac{\pi i}{3}}\int_{\R - i0} \frac{e^{\pi i (w-u) (w+u)}}{\Phi_{\bq}(w+m+u)\Phi_{\bq}(w-m-u)\Phi_{\bq}(w)} \;dw.$$
Again the function $\chi_{5_2}$ is that of \cite{AK1}, which again is related to Hikami's invariant,
in particular Hikami's formally derived expression  in \cite[(4.10)]{HI2} is equal to $e^{\pi i \frac{c_{\bq^2}}{3}}\frac{1}{2\pi \bq}\chi_{5_2}(\frac{-u}{\pi \bq},\frac{1}{2})$, where $\chi_{5_2}:= \chi_{5_2}(x)e^{4\pi i c_{\bq}x\lambda}.$

\subsection{One vertex H-triangulation of $(S^3,5_2)$}
Let $X$ be represented by the diagram 
\begin{equation}
\begin{tikzpicture}[scale=.5,baseline=20]
\draw[very thick] (0,0)--(3,0);
\draw[very thick] (3,1)--(6,1);
\draw[very thick] (6,0)--(9,0);
\draw[very thick] (3,3)--(6,3);
\draw(5,1)..controls (5,1/2) and (6,1/2)..(6,1);
\draw (3,0)--(3,1);
\draw(4,1)..controls (4,0) and (6,.5)..(6,0);
\draw(0,0)..controls (0,2) and (3,1)..(3,3);
\draw(1,0)..controls (1,2) and (4,1)..(4,3);
\draw(2,0)..controls (2,2.5) and (9,2.5)..(9,0);
\draw(5,3)..controls (5,1) and (8,2)..(8,0);
\draw(6,3)..controls (6,2) and (7,2)..(7,0);
\end{tikzpicture}
\end{equation}
Choosing an orientation, the diagram consists of four positive tetrahedra $T_0,T_1,T_2,T_3$. $\partial{X}=\emptyset$ and combinatorially we have 
$\Delta_{0}(X)=\{ * \}$, $\Delta_{1}(X)=\{x,y,z,w,x'\}$. The gluing of the tetrahedra is vertex order preserving which means that edges are glued together in the following manner.
\begin{align*}
x&=x_{03}^{0}=x_{13}^{0}=x_{01}^{1}=x_{12}^{3}=x_{02}^{3}, \\
y&=x_{03}^{1}=x_{12}^{1}=x_{13}^{1}=x_{02}^{2}=x_{03}^{2}=x_{03}^{3}=x_{23}^{3}, \\
z&=x_{01}^{0}=x_{02}^{1}=x_{01}^{2}=x_{12}^{2}=x_{01}^{3}=x_{13}^{3} \\
v&=x_{02}^{0}=x_{12}^{0}=x_{23}^{1}=x_{13}^{2}=x_{23}^{2},\\
x'&=x_{23}^{0}.
\end{align*}
This results in the following equations for the dihedral angles, when we balance all edges but one edge.
\begin{equation*}
a_3=a_1-a_0=c_2,\quad a_0+b_1=b_2+c_3, \quad a_1+a_2+b_3=\frac{1}{2}+c_1.
\end{equation*}
The Boltzmann weights are given by the functions
\begin{align*}
&B\left(T_0,x_{\vert_{\Delta_{1}(T_0)}}\right)=g_{a_0,c_0}(0,v+x-z-x'), \\
&B\left(T_1,x_{\vert_{\Delta_{1}(T_1)}}\right)=g_{a_1,c_1}(z-y,z+y-x-v), \\
&B\left(T_2,x_{\vert_{\Delta_{1}(T_2)}}\right)={g_{a_2,c_2}(v-z,y-z)}, \\
&B\left(T_3,x_{\vert_{\Delta_{1}(T_3)}}\right)=g_{a_3,c_3}(z-y,x-y).
\end{align*}
The partition function is represented by the integral
\begin{align*}
Z^{\text{new}}_{\hbar}(X)=\int_{[0,1]^5}\sum_{m,n,k,p\in \Z} &\tpsi{a_0}{c_0}{m} e^{\pi i (v+x-z-x')(2m)}\\
&{\tpsi{a_1}{c_1}{z-y+n}}e^{\pi i (z+y-x-v)(z-y+2n)} \\
&{\tpsi{a_2}{c_2}{v-z+k}}e^{\pi i (y-z)(v-z+2k)} \\
&\tpsi{a_3}{c_3}{z-y+p} e^{\pi i (x-y)(z-y+2p)} \: dx'dxdydzdv. \\
\end{align*}
Integration over $x'$ removes one of the sums since $\int_{0}^{1}e^{-2\pi i x'm}dx'=\delta(m)$. Hence
\begin{align*}
Z^{\text{new}}_{\hbar}(X)=\tpsi{a_0}{c_0}{0}\int_{[0,1]^4} \sum_{n,k,p\in \Z} & {\tpsi{a_1}{c_1}{z-y+n}}e^{\pi i (z+y-x-v)(z-y+2n)} \\
&{\tpsi{a_2}{c_2}{v-z+k}}e^{\pi i (y-z)(v-z+2k)} \\
&\tpsi{a_3}{c_3}{z-y+p} e^{\pi i (x-y)(z-y+2p)} \: dx'dxdydzdv. \\
\end{align*}
Now integration over $x$ gives $\int_0^1 e^{-2\pi i x(n-p)}dx=\delta(n-p).$ Implementing this and shifting the variable $v\mapsto v+z$, the partition function takes the form
\begin{align*}
Z^{\text{new}}_{\hbar}(X)=\tpsi{a_0}{c_0}{0}\int_{[0,1]^3}\sum_{n,k\in \Z} &\tpsi{a_1}{c_1}{z-y+n} e^{\pi i(y-v)(z-y+2n)} \\
&\tpsi{a_2}{c_2}{v+k}e^{\pi i (y-z)(v+2k)} \\
&\tpsi{a_3}{c_3}{z-y+n}e^{-\pi i y(z-y+2n)} \; dydzdv.
\end{align*}
We make the shift $z \mapsto z+y$ to get the expression
\begin{align*}
Z^{\text{new}}_{\hbar}(X)=\tpsi{a_0}{c_0}{0}\int_{[0,1]^3}\sum_{n,k\in \Z} &\tpsi{a_1}{c_1}{z+n} e^{\pi i(y-v)(z+2n)} \\
&\tpsi{a_2}{c_2}{v+k}e^{-\pi i z(v+2k)} \\
&\tpsi{a_3}{c_3}{z+n}e^{-\pi i y(z+2n)} \; dydzdv,
\end{align*}
which is independent of $y$ so we can remove the integration over this variable.
We integrate over the variable $v$. 
\begin{align*}
\sum_{k\in \Z} \int_{[0,1]}\tpsi{a_2}{c_2}{v+k} e^{-2\pi i v(z+n)} dv e^{-2\pi i zk} =& \sum_{k\in \Z} \int_{k}^{k+1} \tpsi{a_2}{c_2}{v} e^{-2 \pi i v (z+n)} dv \\ &e^{-2\pi i zk}e^{2\pi i k(z+n)} \\
=& e^{-\frac{\pi i }{12}} \int_{\R} \psi_{c_2,b_2}(v) e^{-2\pi i z(z+n)} dv \\ =& e^{-\frac{\pi i}{12}} \tilde{\psi}_{c_2,b_2}(z+n) \\
=&e^{-\frac{\pi i }{6}}e^{\pi i (z+n)^2}\psi_{b_2,a_2}(z+n).
\end{align*}
We therefore get the expression
\begin{align*}
Z^{\text{new}}_{\hbar}(X)&= e^{-\frac{\pi i }{3}} \tpsi{a_0}{c_0}{0}\int_{[0,1]} \sum_{n\in \Z} \psi_{c_1,b_1}(z+n) \psi_{b_2,a_2}(z+n) \psi_{c_3,b_3}(z+n)e^{\pi i (z+n)^2} dz \\
&= e^{-\frac{\pi i }{3}} \tpsi{a_0}{c_0}{0}\int_{\R}  \psi_{c_1,b_1}(z) \psi_{b_2,a_2}(z) \psi_{c_3,b_3}(z)e^{\pi i (z)^2}
\end{align*}
We set $Z=z-2c_{\bq}(c_1+b_1)=y-c_{\bq}(1-2a_1)$. Assuming that we are in the case where all but the edge representing the knot is balanced, i.e. $a_0 \to 0$, we have $a_1=c_2=a_3$.  
$$ z^2=Z^2+c_{\bq}^2(1-2a_1)^2 +2c_{\bq}Z(1-2a_1).$$
Implementing this we get the expression.
\begin{align*}
Z^{\text{new}}_{\hbar}(X)= e^{-\frac{\pi i}{3}} \tpsi{a_0}{c_0}{0}\int_{\R} &\psi(Z)\psi(Z)\psi(Z)e^{\pi i (Z^2+c_{\bq}^2(1-2a_1)^2 +2c_{\bq} 
	Z(1-2a_1))} \\
&e^{-4\pi i c_{\bq} c_1(Z+c_{\bq}(1/2-a_1))}\nu_{c_1,b_1}\\
&e^{-4\pi i c_{\bq} b_2(Z+c_{\bq}(1/2-c_2))}\nu_{b_2,a_2}  \\
&e^{-4\pi i c_{\bq} c_2(Z+c_{\bq}(1/2-a_3))}\nu_{c_3,b_\Z3} dz. 
\end{align*}
\begin{align*}
Z^{\text{new}}_{\hbar}(X)&=\nu_{c_1,b_1}\nu_{b_2,a_2}\nu_{c_3,b_3}e^{-\frac{\pi i}{3}} e^{\frac{\phi i}{\hbar}}\tpsi{a_0}{c_0}{0}\int_{\R} \psi(Z)^3e^{\pi i Z^2} \;dz \\
&=\nu_{c_1,b_1}\nu_{b_2,a_2}\nu_{c_3,b_3}e^{-\frac{\pi i}{3}} e^{\frac{\phi i}{\hbar}}\tpsi{a_0}{c_0}{0}\int_{\R} \frac{e^{\pi i Z^2}}{\Phi_{\bq}(Z)^{3}} \;dz 
\end{align*}
Because the combination of dihedral angles in front of $Z$ sums to $0$.
$$ -4\pi i c_{\bq} Z (c_1+b_2+c_3-\frac{1}{2}+a_1)=-4\pi i c_{\bq} Z (a_1+b_1+c_1-\frac{1}{2})=0 $$
This corresponds to the partition function in the original formulation, see \cite{AK1}.

In this case the renormalised partition function takes the form
\begin{equation*}
\tilde{Z}^{\text{new}}_{\hbar}(X)=\lim_{a_0\to 0} \Phi_{\bq}{Z}^{\text{new}}_{\hbar}(X)=\frac{e^{i\pi /4}}{\nu_{c_0,0}}\chi_{5_2}(0).
\end{equation*}

\subsection{One-vertex H-triangulation of $(S^3,6_1)$}
Let $X$ be represented by the diagram 
\begin{equation}\label{diagram:61}
\begin{tikzpicture}[scale=.5,baseline=20]
\draw[very thick] (0,0)--(3,0);
\draw[very thick] (4,0)--(7,0);
\draw[very thick] (8,0)--(11,0);
\draw[very thick] (2,5)--(5,5);
\draw[very thick] (6,5)--(9,5);
\draw(0,0)..controls (0,3.5) and (5,.5)..(5,5);
\draw(2,0)..controls (2,1) and (3,1)..(3,0);
\draw(1,0)..controls (1,2.5) and (7,2.5)..(7,0);
\draw(4,0)..controls (4,2) and (6,3)..(6,5);
\draw(5,0)..controls (5,2.5) and (11,2.5)..(11,0);
\draw(6,0)..controls (6,1.5) and (10,1.5)..(10,0);
\draw(8,0)..controls (8,2) and (2,3)..(2,5);
\draw(9,0)..controls (9,3) and (8,2)..(8,5);
\draw(3,5)..controls (3,2.5) and (9,2.5)..(9,5);
\draw(4,5)..controls (4,3.5) and (7,3.5)..(7,5);
\end{tikzpicture}
\end{equation}

This one vertex $H$-triangulation of $(S^3,6_1)$ consists of 5 tetrahedra $T_1$ and $T_3$ which are negatively oriented tetrahedra and $T_2,T_4,T_5$ which are positively oriented tetrahedra. We denoted the tetrahedra as follows. In the bottom we have $T_1,T_2,T_3$ from left to right and on top we have $T_4,T_5$ from right to left.

In the diagram the knot $6_1$ is represented by the edge connecting the maximal and next to maximal vertex of $T_1$. We impose a shape structure on the triangulation and balance all but the one edge representing the knot. We get the following equations on the shape parameters
\begin{align*}
& a_3 = a_1+c_2,\quad a_3+a_4=a_1+a_5, \quad a_1+ c_2 = c_4 + c_5, \\
&\frac{1}{2}+b_3+c_5 = a_2 + a_3 + a_4, \quad 1= a_2+c_3+c_4+a_5.
\end{align*}
We calculate the partition function for the Teichmüller TQFT using the original formulation of the theory. In this formulation the states are assigned to each face of each tetrahedron according to the diagram \eqref{diagram:61}.
\begin{align*}
Z_{\hbar}(X) = \int_{\R^{10}} &\overline{\left< w,t \:\vert \: T_{a_1,c_1}\: \vert \:u ,t \right>}\left<z ,q \:\vert \: T_{a_2,c_2}\: \vert \: v,u \right> \overline{\left< x, q\:\vert \:T_{a_3,c_3} \: \vert \: r, v\right>} \\ &\left< s, y\:\vert \: T_{a_4,c_4}\: \vert \:r ,z \right>\left< w,x \:\vert \:T_{a_5,c_5} \: \vert \:y ,s \right> d{\bar{x}} 
\end{align*}
\begin{align*}
Z_{\hbar}(X)= \int_{\R^{10}}& \delta(w+t-u)\delta(z+q-v)\delta(x+q-r)\delta(s+y-r)\delta(w+x-y) \\
& \overline{\tpsi{a_1}{c_1}{0}} e^{-2\pi i w(0)} \\
& \tpsi{a_2}{c_2}{u-q}e^{2\pi i z(u-q)} \\
& \overline{\tpsi{a_3}{c_3}{v-q}}e^{-2\pi i x(v-q)} \\
& \tpsi{a_4}{c_4}{x-y}e^{2\pi i s(z-y)} \\
& \tpsi{a_5}{c_5}{s-x}e^{2\pi i w(s-x)} \: dqdrdsdtdudvdxdwdzdy
\end{align*}
Integrating over five variables $t,v,r,y,w$ yields the expression
\begin{align*}
Z_{\hbar}(X)
=\overline{\tpsi{a_1}{c_1}{0}} \int_{\R^{5}} & \tpsi{a_2}{c_2}{u-q}e^{2\pi i z(u-q)} \\
\times& \overline{\tpsi{a_3}{c_3}{z}}e^{-2\pi i xz} \\
\times& \tpsi{a_4}{c_4}{z+s-x-q}e^{2\pi i s(z+s-x-q)} \\
\times &\tpsi{a_5}{c_5}{s-x}e^{2\pi i (q-s)(s-x)} \: dqdsdudxdz.
\end{align*}
We integrate over the variable $u$ using the Fourier transform.
\begin{equation*}
e^{-\frac{\pi i }{12}}\int_{\R} \psi_{{c_2},{b_2}}({u-q}) e^{2\pi i z (u-q)} du = e^{-\frac{\pi i }{12}}\tilde{\psi}_{{c_2},{b_2}}(-z)=e^{-\frac{\pi i }{6}}e^{\pi i z^2}\psi_{b_2,a_2}(-z).
\end{equation*}
Using formulas from Section \ref{CTOPI} we can write
\begin{align*}
Z_{\hbar}(X)
=e^{-\frac{3\pi i }{12}}\overline{\tpsi{a_1}{c_1}{0}} \int_{\R^{4}} & {\psi}_{{b_2},{a_2}}(-z)e^{\pi i z^2} \psi_{{b_3},{c_3}}(-z)e^{\pi i z^2} \\
& \tpsi{a_4}{c_4}{z+s-x-q} \tpsi{a_5}{c_5}{s-x}\\
&e^{2\pi i (sz-xz-qx)} \: dqdsdxdz.
\end{align*}
Integration over the variable $q$ gives
\begin{align*}
\int_{\R} \tpsi{a_4}{c_4}{z+s-x-q}e^{-2\pi i qx} dq &= e^{-\frac{\pi i }{12}} \tilde\psi_{c_4,b_4}(-x) e^{2\pi i (x^2-xz-sx)} \\
&=e^{-\frac{\pi i}{6}} \psi_{b_4,a_4}(-x)e^{2 \pi i( \frac{3}{2}x^2-xz-sx)}
\end{align*}
\begin{align*}
Z_{\hbar}(X) =e^{-\frac{5\pi i }{12}}\overline{\tpsi{a_1}{c_1}{0}} \int_{\R^{2}} & {\psi}_{{b_2},{a_2}}(-z) {\psi_{{b_3},{c_3}}(-z)} \psi_{{b_4},{a_4}}(-x) \tpsi{a_5}{c_5}{s-x} \\
&e^{2\pi i (sz-2xz+z^2+\frac{3}{2}x^2-sx)} \: dxdsdz \\
\end{align*}
Integration over $s$ now gives
\begin{align*}
e^{-\frac{\pi i }{12}}\int_{\R}\psi_{c_5,b_5}(s-x)e^{-2\pi i s(x-z)} ds &=e^{-\frac{\pi i }{12}}\tilde\psi_{c_5,b_5}(x-z)e^{-2\pi i (x^2 -xz)} \\
= &e^{-\frac{\pi i}{6}}\psi_{b_5,a_5}(x-z)e^{\pi i (x-z)^2}e^{-2\pi i (x^2 -xz)}.
\end{align*}
So the partition function takes the form
\begin{multline*}
Z_{\hbar}(X) =e^{-\frac{7\pi i }{12}}\overline{\tpsi{a_1}{c_1}{0}} \int_{\R^{2}}  {\psi}_{{b_2},{a_2}}(-z) {\psi_{{b_3},{c_3}}(-z)} \psi_{{b_4},{a_4}}(-x) \psi_{{b_5},{a_5}}(x-z) \\
e^{2\pi i (-2xz+\frac{3}{2}z^2+x^2)} \: dxdz.
\end{multline*}
which is equivalent to
\begin{multline}\label{61original}
Z_{\hbar}(X) =e^{-\frac{7\pi i }{12}}\overline{\tpsi{a_1}{c_1}{0}} \int_{\R^{2}}  {\psi}_{{b_2},{a_2}}(z) {\psi_{{b_3},{c_3}}(z)} \psi_{{b_4},{a_4}}(-x) \psi_{{b_5},{a_5}}(x+z) \\
e^{2\pi i (2xz+\frac{3}{2}z^2+x^2)} \: dxdz.
\end{multline}

Set $\tilde{z}=z-c_{\bq}(1-2c_2)$ and $-\tilde{x}=-x-c_{\bq}(1-2c_4)$. Then $$z-c_{\bq}(1-2a_3)=\tilde{z}+c_{\bq}(1-2c_2)-c_{\bq}(1-2a_3) = \tilde{z},$$ because $a_3\to c_2$ in the limit where $a_1\to 0.$
Furthermore we have 
$$ x+z-c_{\bq}(1-2c_5)=\tilde{x}-c_{\bq}(1-2c_4)+\tilde{z}+c_{\bq}(1-2c_2)-c_{\bq}(1-2c_5)=\tilde{x}+\tilde{z}-c_{\bq}$$
because $$c_4+c_5-c_2 \to 0$$ when $a_1 \to 0.$
We can now write the partition function in the following way
\begin{align*}
Z_{\hbar}(X)= e^{-\frac{7\pi i}{12}} \int_{\R^2} &\psi(\tilde z)\psi( \tilde z)\psi(-\tilde x)\psi(\tilde x + \tilde z -c_{\bq})\overline{\tpsi{a_1}{c_1}{0}}\\
&e^{-4\pi i  (-\tilde z-c_{\bq}(1-2c_2))(\tilde x-c_{\bq}(1-2c_4))+3\pi i (-\tilde z -c_{\bq}(1-2c_2))^2+2\pi i (\tilde x -c_{\bq}(1-2c_4))^2} \\
&e^{-4\pi i c_{\bq}b_2 (\tilde z + c_{\bq}(1/2-c_2))} \nu_{a_2,b_2}\\
&e^{-4\pi i c_{\bq}b_3 (\tilde z + c_{\bq}(1-2c_2)-c_{\bq}(1/2-a_3))} \nu_{b_3,c_3} \\
&e^{-4\pi i c_{\bq}b_4 (-\tilde x-c_{\bq}(1/2-c_4))}\nu_{b_4,a_4} \\
&e^{-4\pi i c_{\bq}b_5 (\tilde x +\tilde z -c_{\bq}(1-2c_4) +c_{\bq}(1-2c_2)-c_{\bq}(1/2-c_5))}\nu_{b_5,a_5} d\tilde x d\tilde z 
\end{align*}
In front of $\tilde z$ in the exponent we have the factor
\begin{align*} 
&4\pi i c_{\bq} (-1+2c_4 +3/2-3c_2-b_2-b_3-b_5) \\ 
&= 4\pi i c_{\bq}(1/2 + 2c_4 - 2c_2 -b_2-a_3-b_3-b_5) \\
&=4\pi i c_{\bq}(1/2 + 2c_4 - c_2 -1/2 + a_2 -1/2+c_3-1/2+a_5+c_5)  \\
&=4\pi i c_{\bq}(-1+1+c_4-c_2+c_5)=0.
\end{align*}
In front of $\tilde x$ in the exponent we also have the factor $0$ since 
\begin{align*}
-b_5+b_4+1-2c_2-1+2c_4&=-\frac{1}{2}+a_5+c_5+c_4+b_4+ c_4 -2c_2 \\&=-\frac{1}{2} +a_5 +\frac{1}{2}-a_4 - c_2 \\& = a_5-a_4-a_3=0.
\end{align*}
This gives us the partition function
\begin{align*}
Z_{\hbar}(X)= e^{i \frac{\phi}{\hbar}} e^{-\frac{7\pi i}{12}}\overline{\tpsi{a_1}{c_1}{0}} \int_{\R^2} &\psi(\tilde z)\psi( \tilde z)\psi(-\tilde x)\psi(\tilde x+\tilde z-c_{\bq})\\
& e^{2\pi i (\frac{3}{2}\tilde z^2 +\tilde x^2 + 2\tilde x \tilde z)} d\tilde x d\tilde z,
\end{align*}
where $\phi$ is a real quadratic polynomial of dihedral angles. 
Finally, we do the shift $\tilde x \mapsto \tilde x - \tilde z + c_{\bq}$ and get the expression
\begin{align*}
Z_{\hbar}(X) &= \zeta_{inv}^2e^{2\pi i c_{\bq}^2}e^{i \frac{\phi}{\hbar}} e^{-\frac{7\pi i}{12}}\overline{\tpsi{a_1}{c_1}{0}} \int_{\R^2} \frac{\Phi_{\bq}(\tilde z)\Phi_{\bq}(\tilde x)}{\Phi_{\bq}(- \tilde z)\Phi_{\bq}(\tilde x-\tilde z -c_{\bq})} e^{\pi i \tilde x^2+4\pi c_{\bq}x} d\tilde x d\tilde z.
\end{align*}
which exactly corresponds to the result for an H-triangulation of the $6_1$ knot in \cite{arXiv:1210.8393}.

\subsection{One vertex H-triangulation of $(S^3,6_1)$ -- New formulation}
We here calculate the partition function for the H-triangulation of the knot $6_1$ using the new formulation of the TQFT from quantum \Teim theory. 

The gluing pattern of faces and edges in diagram \eqref{diagram:61} results in the following states
\begin{align*}
x &:=x_{02}^{1}=x_{03}^{1}=x_{01}^{2}=x_{02}^{2}=x_{01}^{3}, \\
y &:=x_{03}^{2}=x_{13}^{2}=x_{02}^{3}=x_{03}^{3}=x_{13}^{3}=x_{02}^{4}=x_{03}^{4}=x_{03}^{5}, \\
z &:=x_{23}^{2}=x_{12}^{3}=x_{12}^{4}=x_{01}^{5}, \\
v &:=x_{12}^{1}=x_{13}^{1}=x_{23}^{3}=x_{23}^{4}=x_{12}^{5}=x_{13}^{5}, \\
w &:=x_{23}^{1}=x_{12}^{2}=x_{01}^{4}=x_{13}^{4}=x_{02}^{5}=x_{23}^{5}, \\
x' &:= x_{01}^1.
\end{align*}

The Bolzmann weights for the five tetrahedron are given by
\begin{align*}
&\overline{g_{a_1,c_1}(0,x+v-x'-w)}, \quad {g_{a_2,c_2}(x-w,y-z)}, \quad \overline{g_{a_3,c_3}(y-z,2y-x-v)}, \\
&g_{a_4,c_4}(w-z,y-v), \quad g_{a_5,c_5}(w-y,v-z+p).
\end{align*}
\begin{align*}
Z_{\hbar}^{\text{New}}(X) = \int_{[0,1]^6}\sum_{k,l,m,n,p \in \Z}3 
&\overline{\tpsi{a_1}{c_1}{k}} \\
&{\tpsi{a_2}{c_2}{x-w+l}} e^{\pi i (y-z)(x-w+2l)} \\ 
& \overline{\tpsi{a_3}{c_3}{y-z+m}}e^{-\pi i (2y-x-v)(y-z+2m)} \\
&\tpsi{a_4}{c_4}{w-z+n}e^{\pi i (y-v)(w-z+2n)} \\
&\tpsi{a_5}{c_5}{w-y+p} e^{\pi i (v-z)(w-y+p)}  dxdydzdvdwdx' 
\end{align*}
Integration over $x'$ gives $\delta(k)$, which removes one of the sums
\begin{align*}
Z_{\hbar}^{\text{New}}(X) =\overline{\tpsi{a_1}{c_1}{0}} \int_{[0,1]^5}\sum_{l,m,n,p \in \Z} 
&{\tpsi{a_2}{c_2}{x-w+l}} e^{\pi i (y-z)(x-w+2l)} \\ 
& \overline{\tpsi{a_3}{c_3}{y-z+m}}e^{-\pi i (2y-x-v)(y-z+2m)} \\
&\tpsi{a_4}{c_4}{w-z+n}e^{\pi i (y-v)(w-z+2n)} \\
&\tpsi{a_5}{c_5}{w-y+p} e^{\pi i (v-z)(w-y+p)}  dxdydzdvdw.
\end{align*}
We do a shift $x\to x+w$
\begin{align*}
Z_{\hbar}^{\text{New}}(X) =\overline{\tpsi{a_1}{c_1}{0}} \int_{[0,1]^5}\sum_{l,m,n,p \in \Z} 
&{\tpsi{a_2}{c_2}{x+l}} e^{\pi i (y-z)(x+2l)} \\ 
& \overline{\tpsi{a_3}{c_3}{y-z+m}}e^{-\pi i (2y-x-w-v)(y-z+2m)} \\
&\tpsi{a_4}{c_4}{w-z+n}e^{\pi i (y-v)(w-z+2n)} \\
&\tpsi{a_5}{c_5}{w-y+p} e^{\pi i (v-z)(w-y+p)}  dxdydzdvdw.
\end{align*}
Note that 
\begin{align*}
&\sum_{l \in \Z} e^{2\pi i l (y-z)} \int_{[0,1]}{\tpsi{a_2}{c_2}{x+l}}e^{-2\pi i x (z-y-m)}dx \\
&=e^{-\frac{\pi i}{12}}\sum_{l \in \Z}  \int_{l}^{l+1}{\psi_{{c_2},{b_2}}({x})}e^{-2\pi i x (z-y-m)}dx  \\
&=e^{-\frac{\pi i}{12}}  \int_{\R}{\psi_{{c_2},{b_2}}({x})}e^{-2\pi i x (z-y-m)}dx \\
&=e^{-\frac{\pi i}{12}}\tilde\psi_{c_2,b_2}(z-y-m) = e^{-\frac{\pi i}{6}}\psi_{b_2,a_2}(z-y-m)e^{\pi i (z-y-m)^2}.
\end{align*}
\begin{align*}
Z_{\hbar}^{\text{New}}(X) =e^{-\frac{\pi i 5}{12}}\overline{\tpsi{a_1}{c_1}{0}} &\int_{[0,1]^3} \sum_{m,n,p\in \Z}  \psi_{{b_2},{a_2}}({z-y-m}) \psi_{b_3,c_3}(z-y-m)\\
& \psi_{{c_4},{b_4}}({w-z+n})\psi_{{c_5},{b_5}}({w-y+p}) \\
&e^{2\pi i (z-y-m)^2}e^{-\pi i (2y-w-v)(y-z+2m)} \\ &e^{(y-v)(w-z+2n)}e^{\pi i (v-z)(w-y+2p)}
\:dzdw. 
\end{align*}
Integration over $v$ removes yet another sum. I.e. $n=p+m$.
We shift $z \mapsto z+y$ and $w\mapsto w+y$ and see that the function is independent of $y$ which yields the expression
\begin{align*}
Z_{\hbar}^{\text{New}}(X) =e^{-\frac{\pi i 5}{12}}\overline{\tpsi{a_1}{c_1}{0}} &\int_{[0,1]^3} \sum_{m,p\in \Z}  \psi_{{b_2},{a_2}}({z-m}) \psi_{b_3,c_3}(z-m)\\
& \psi_{{c_4},{b_4}}({w-z+p+m})\psi_{{c_5},{b_5}}({w+p}) \\
&e^{2\pi i (z-m)^2}e^{\pi i w(-z+2m)} e^{-\pi i z(w+2p)}
\:dzdw. 
\end{align*}
\begin{align*}
Z_{\hbar}^{\text{New}}(X) =e^{-\frac{\pi i 5}{12}}\overline{\tpsi{a_1}{c_1}{0}} &\int_{\R^2}  \psi_{{b_2},{a_2}}({z}) \psi_{b_3,c_3}(z)\\
& \psi_{{c_4},{b_4}}({w-z})\psi_{{c_5},{b_5}}({w}) \\
&e^{2\pi i z^2}e^{\pi i (w-m)(-z+m)} e^{-\pi i (z+m)(w+p)}
\:dzdw. 
\end{align*}
Now let $f(z):=\psi_{{b_2},{a_2}}({z}) \psi_{b_3,c_3}(z).$ We then calculate
\begin{align*}
Z_{\hbar}^{\text{New}}(X) &=e^{-\frac{\pi i 5}{12}}\overline{\tpsi{a_1}{c_1}{0}} \int_{\R^2}  f(z)
\psi_{{c_4},{b_4}}({w-z})\psi_{{c_5},{b_5}}({w}) 
e^{2\pi i z^2}e^{-2\pi i wz}\:dzdw \\
&=e^{-\frac{\pi i 5}{12}}\overline{\tpsi{a_1}{c_1}{0}} \int_{\R^2}  f(z) \psi_{b_3,c_3}(z)
\psi_{{c_4},{b_4}}({w})\psi_{{c_5},{b_5}}({w+z}) 
e^{-2\pi i wz} \:dzdw\\
&=e^{-\frac{\pi i 5}{12}}\overline{\tpsi{a_1}{c_1}{0}} \int_{\R^2}  f(z)  \psi_{c_4,b_4}(w)\tilde{\psi}_{{c_5},{b_5}}(x) e^{2\pi i ((x-z)(w+z)+z^2)}\: dxdzdw \\
&=e^{-\frac{\pi i 5}{12}}\overline{\tpsi{a_1}{c_1}{0}} \int_{\R^2}  f(z)  \psi_{c_4,b_4}(w)\tilde{\psi}_{{c_5},{b_5}}(x+z) e^{2\pi i (x(w+z)+z^2)}\: dxdzdw \\
&=e^{-\frac{\pi i 6}{12}}\overline{\tpsi{a_1}{c_1}{0}} \int_{\R^2}  f(z) \psi_{c_4,b_4}(w){\psi}_{{b_5},{a_5}}(x+z) e^{2\pi i (wx+2xz+\frac{3}{2}z^2+\frac{x^2}{2})}\: dxdzdw \\
&=e^{-\frac{\pi i 6}{12}}\overline{\tpsi{a_1}{c_1}{0}} \int_{\R^2}  f(z) \tilde{\psi}_{c_4,b_4}(-x){\psi}_{{b_5},{a_5}}(x+z) e^{2\pi i (2xz+\frac{3}{2}z^2+\frac{x^2}{2})}\: dzdw \\
&=e^{-\frac{\pi i 7}{12}}\overline{\tpsi{a_1}{c_1}{0}} \int_{\R^2}  f(z){\psi}_{b_4,a_4}(-x){\psi}_{{b_5},{a_5}}(x+z) e^{2\pi i (2xz+\frac{3}{2}z^2+x^2)}\: dzdw \\
\end{align*}
This is the exact same expression as in \eqref{61original} and the two formulations coinside. 

\subsection{Volume of $(S^3,6_1)$}

\begin{thm}
	The hyperbolic volume of the complement of $6_1$ in $S^3$ is recovered as the following limit
	\begin{equation}
	\lim_{\hbar \to 0} 2\pi \hbar \log{\abs{J_{s^3,6_1}(\hbar,0)}}=-\Vol(S^3\backslash 6_1).
	\end{equation}
\end{thm}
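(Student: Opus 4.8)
By the formula for $\chi_{6_1}$ established above, $J_{S^3,6_1}(\hbar,0)=\chi_{6_1}(0)$ equals
\[
\chi_{6_1}(0)=\int_{\R^2}\frac{e^{\pi i y^2}\,e^{4\pi i c_{\bq}z}}{\Phi_{\bq}(y)^2\,\Phi_{\bq}(z+c_{\bq})\,\Phi_{\bq}(z-y)}\,dy\,dz .
\]
The plan is a formal stationary phase analysis in the semiclassical regime $\hbar\to 0$, in which $\bq\to 0$ (with $\hbar=2\pi\bq^2$). The one external input is the semiclassical asymptotics of Faddeev's quantum dilogarithm,
\[
\log\Phi_{\bq}(z)=\frac{1}{2\pi i\bq^2}\Li_2\!\left(-e^{2\pi\bq z}\right)+O(1)=\frac{1}{i\hbar}\Li_2\!\left(-e^{2\pi\bq z}\right)+O(1),\qquad \bq\to0,
\]
valid uniformly for $2\pi\bq z$ in compact subsets avoiding the singular locus.

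First I would rescale the integration variables, $Y=2\pi\bq y$ and $W=2\pi\bq z$, so that all arguments of the dilogarithms remain of order $1$; the resulting Jacobian $(2\pi\bq)^{-2}$ affects only the subexponential prefactor and is irrelevant for the volume. Using $2\pi\bq c_{\bq}=\pi i(1+\bq^2)\to\pi i$, the shifted factor satisfies $\Phi_{\bq}(z+c_{\bq})^{-1}\to\exp\!\big(-\tfrac{1}{i\hbar}\Li_2(e^{W})\big)$, the shift by $c_{\bq}$ converting the argument $-e^{W}$ into $e^{W}$. Rewriting the Gaussian and linear exponentials in the same way, the integrand takes the WKB form $\exp\!\big(\tfrac{1}{i\hbar}V(Y,W)\big)$ with potential
\[
V(Y,W)=-\tfrac12 Y^2-2\pi i\,W-2\,\Li_2(-e^{Y})-\Li_2(e^{W})-\Li_2(-e^{W-Y}).
\]

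Next I would locate the saddle by solving $\partial_Y V=\partial_W V=0$. With $\tfrac{d}{dt}\Li_2(-e^{t})=-\log(1+e^{t})$ these read
\[
-Y+2\log(1+e^{Y})-\log(1+e^{W-Y})=0,\qquad -2\pi i+\log(1-e^{W})+\log(1+e^{W-Y})=0,
\]
which upon exponentiation become a pair of polynomial gluing equations in $e^{Y},e^{W}$. I would identify these with the Neumann--Zagier gluing equations of the ideal triangulation of $S^3\backslash 6_1$ and select the root $(Y_0,W_0)$ determined by the complete hyperbolic structure. The saddle point method then yields $\chi_{6_1}(0)\sim(\text{prefactor})\cdot\exp\!\big(\tfrac{1}{i\hbar}V(Y_0,W_0)\big)$, whence
\[
2\pi\hbar\log\abs{\chi_{6_1}(0)}\;\longrightarrow\;2\pi\,\Im V(Y_0,W_0),
\]
since $\Re\big(V_0/(i\hbar)\big)=\hbar^{-1}\Im V_0$. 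It then remains to verify the identity $2\pi\,\Im V(Y_0,W_0)=-\Vol(S^3\backslash 6_1)$: at the geometric root the imaginary part of the dilogarithmic combination in $V$ reproduces the sum of the Bloch--Wigner/Lobachevsky volumes of the ideal tetrahedra, which by Thurston's gluing equals the hyperbolic volume, while the real quadratic and linear terms $-\tfrac12 Y^2-2\pi iW$ drop out of the imaginary part on the real-shape locus after using the critical equations.

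The hard part will be the rigorous justification of the saddle point approximation. One must deform the contour (the $z$-integration runs along $\R-i0$) onto a steepest-descent path through $(Y_0,W_0)$ without crossing the poles and zeros of the quantum dilogarithms, show that the geometric critical point dominates the other solutions of the gluing equations, and control the uniformity of the quantum-dilogarithm asymptotics near the singular locus. Pinning down the correct branch, so that $\Im V_0$ acquires exactly the sign producing $-\Vol$, and confirming the cancellation of the non-dilogarithmic terms, are the remaining delicate points; this is precisely the content of the \emph{formal} stationary phase analysis underlying the Andersen--Kashaev volume conjecture.
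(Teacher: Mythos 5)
Your proposal follows essentially the same route as the paper: both are formal stationary phase analyses based on the quasi-classical expansion $\Phi_{\bq}(x/2\pi\bq)\approx\exp\bigl(\tfrac{1}{2\pi i\bq^2}\Li_2(-e^x)\bigr)$, a rescaling of the integration variables by $2\pi\bq$, and the extraction of a dilogarithmic potential $V$ whose critical value gives the exponential growth rate. The differences are in the details of the endgame. The paper starts not from the displayed formula for $\chi_{6_1}(0)$ but from the shifted representation $\int_{\R^2}\frac{\Phi_{\bq}(x)\Phi_{\bq}(z)}{\Phi_{\bq}(-x)\Phi_{\bq}(z-x-c_{\bq})}e^{\pi i z^2-4\pi i c_{\bq}z}\,dx\,dz$ obtained at the end of the H-triangulation computation, so its potential $V(x,z)=2\Li_2(-e^{x})+\Li_2(-e^z)-\Li_2(e^{z-x})-\tfrac12 z^2+2\pi i z+\tfrac12 x^2$ differs from yours in the placement of signs and $c_{\bq}$-shifts; you should check that your potential really is the one produced by your chosen representation, since the two integrals differ by a change of variables. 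More substantively, the paper eliminates one variable from the critical equations, reduces to the single polynomial equation $1-t-t^2=(1-t^2)^2(1+t)$ in $t=e^z$, solves it numerically, and verifies $2\pi\,\Im V_0=-3.1632\ldots=-\Vol(S^3\backslash 6_1)$ numerically, whereas you identify the critical equations with the Neumann--Zagier gluing equations and invoke the Bloch--Wigner/Lobachevsky description of tetrahedral volumes. Your route is more conceptual, but one step of it is stated too quickly: since the geometric critical point $(Y_0,W_0)$ is genuinely complex, the terms $-\tfrac12Y^2-2\pi iW$ do \emph{not} simply ``drop out of the imaginary part''; their imaginary parts are nonzero and must be shown to combine with the imaginary parts of the dilogarithms (which individually give Bloch--Wigner values only up to the elementary $\log|{\cdot}|\arg({\cdot})$ corrections) to produce exactly the sum of tetrahedral volumes. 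The paper avoids this issue by direct numerical evaluation. Both arguments share the same unproven core — the justification of the saddle point method itself — which you correctly flag and which the paper also leaves formal.
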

\begin{proof}
	We consider the expression
	\begin{align}
	J_{S^3,6_1}(\hbar,0)=\int_{\R^2}\frac{\Phi_{\bq}(x)\Phi_{\bq}(z)}{\Phi_{\bq}(-x)\Phi_{\bq}(z-x-c_{\bq})}e^{\pi i z^2-4\pi i c_{\bq}z}dxdz.
	\end{align}
	Using the quasi-classical asymptotic behaviour of Faddeev's quantum dilogarithm shown in Corollary \ref{eq:asym} we can approximate in the following manner.
	For $\bq$ close to zero the integral in \eqref{inv} is approximated by the double contour integral 
	\begin{align*}
	J_{S^3,6_1}(\hbar,0) &= \frac{1}{(2\pi \bq)^2}\int_{\R^2}\frac{\Phi_{\bq}\left(\frac{x}{2\pi \bq}\right)\Phi_{\bq}\left(\frac{z}{2\pi \bq}\right)}{\Phi_{\bq}\left(\frac{-x}{2\pi \bq}\right)\Phi_{\bq}\left(\frac{z-x-c_{\bq}}{2\pi \bq}\right)}e^{-\frac{z^2}{4\pi i \bq^2}+\frac{ y}{ \bq^2}}dxdz \\
	&\sim \frac{1}{(2\pi \bq)^2}\int_{\R^2}e^{\frac{1}{2\pi i \bq^2}(2\Li_2(-e^{x})+\Li_2(-e^z)-\Li_2(e^{z-x})-\frac{1}{2}z^2+2\pi i z +\frac{1}{2}x^2)} dxdz \\
	&=\frac{1}{(2\pi \bq)^2}\int_{\R^2}e^{\frac{1}{2\pi i \bq^2}V(x,z)} dxdz,
	\end{align*}
	where the potential $V$ is given by
	\begin{equation}
	V(x,z)=2\Li_2(-e^{x})+\Li_2(-e^z)-\Li_2(e^{z-x})-\frac{1}{2}z^2+2\pi i z +\frac{1}{2}x^2.
	\end{equation}
	It is easily seen that we can treat $\bq^2$ as $\hbar$. Therefore, we look for stationary points of the potential $V$
	\begin{equation}
	\frac{\partial V(x,z)}{\partial x}=-2\log(1+e^x)-\log(1-e^{z-x})+x=\log\frac{e^x}{(1+e^x)^2(1-e^{z-x})}.
	\end{equation}
	\begin{equation}
	\frac{\partial V(x,z)}{\partial z}=-\log(1+e^z)+\log(1-e^{z-x})-z+2\pi i= \log\frac{1-e^{z-x}}{(1+e^z)e^z}.
	\end{equation}
	Stationary points are given by solutions to the equations 
	\begin{align}
	e^x &= (1+e^x)^2(1-e^{z-x}),\label{equ} \\
	(1+e^z)e^z &= 1- e^{z-x}\label{eq}.
	\end{align}
	From \eqref{eq} we see that 
	\begin{align*}
	e^x=\frac{1}{e^{-z}-1-e^z}.
	\end{align*}
	Inserting in \eqref{equ} we get the equation
	\begin{align*}
	\frac{1}{e^{-z}-1-e^z} &= \left(\frac{e^{-z}-e^z}{e^{-z}-1-e^z}\right)^2(1+e^z)e^z \iff 1-t-t^2=(1-t^2)^2(1+t),
	\end{align*}
	where we set $e^z=t.$
	
	Numerical solutions for the last equation are given by
	\begin{align*}
	t_1&=-1,39923-0,32564 i, \quad 
	&&t_3=0,899232-0,400532 i, \\
	t_2&=-1,39923+0,32564 i, \quad 
	&&t_4=0,899232+0,400532 i.
	\end{align*}
	The maximal contribution to the integral comes from the point $t_2$. The saddle point method lets us obtain the following limit 
	\begin{equation*}
	\lim_{\hbar^2 \to 0}2\pi \hbar\abs{J_{S^3,6_1}(\hbar,0)}=-3.1632...\cdot I = -\Vol(S^3\backslash 6_1)
	\end{equation*}
\end{proof}


\section{The Teichmüller TQFT representation of the mapping class group $\Gamma_{1,1}$}\label{appmcg}
We will here give a representation for the mapping class group of the once punctured torus by the use of the new formulation of the Teichmüller  TQFT.

The framed mapping class group $\Gamma_{1,1}$ is generated by the standard elements $S$ and $T$. See e.g. Section 6 in \cite{AU3} for a description of these elements (they of course maps to the standard $S$ and $T$ matrix once mapped to the mapping class group of the torus). Hence we just need to understand how these to elements are represented by the Teichm\"{u}ller TQFT. To this end, we build a cobordism $(M,\mathbb T^2,{\mathbb T^2 }')$ from one triangulation of $\mathbb T^2$ to the image of this triangulation under the action of $S$ and likewise for the action of $T$. We triangulate the torus $\mathbb T^2 = S^1\times S^1$ according to Figure \ref{Torus}.
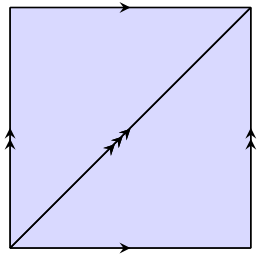
\begin{figure}[h]
\begin{center}
	\begin{tikzpicture}[scale=.8,baseline]
	\draw[fill=blue!15] (-2,2)--(2,2)--(2,-2)--(-2,-2)--cycle;
	\tkzDefPoint(-2,-2){A}
	\tkzDefPoint(2,-2){B}
	\tkzDefPoint(2,2){C}
	\tkzDefPoint(-2,2){D}
	\tkzDrawSegment[arrowMe=>>s](A,D)
	\tkzDrawSegment[arrowMe=>>s](B,C)
	\tkzDrawSegment[arrowMe=stealth](D,C)
	\tkzDrawSegment[arrowMe=stealth](A,B)
	\tkzDrawSegment[arrowMe=>>>s](A,C)
	\tkzDrawSegment[arrowMe=>>>s](A,C)
	\end{tikzpicture}
	\caption{Triangulated torus.}
	\label{Torus}
	\end{center}
\end{figure}
In this triangulation opposite arrows are identified and this gives us a triangulation with two triangles and three edges. We build the cobordism for the action of $S$ according to Figure \ref{Cobordism $X_S$} and the cobordism for $T$ according to Figure \ref{Cobordism $X_T$}. We see that on each boundary component we have three edges. 
The cobordisms that we build are given shaped triangulations. 
We can choose the dihedral angles such that they are all positive. And we are able to compose these cobordisms. 

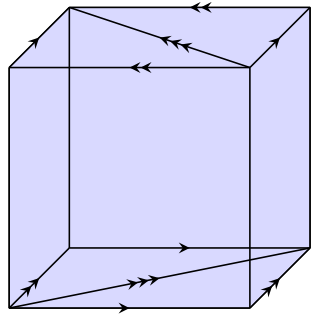
\begin{figure}[h]
\begin{center}
	\begin{tikzpicture}[scale=.8,baseline]
	\tkzDefPoint(-2,-2){A}
	\tkzDefPoint(2,-2){B}
	\tkzDefPoint(2,2){C}
	\tkzDefPoint(-2,2){D}
	\tkzDefPoint(-1,-1){E}
	\tkzDefPoint(3,-1){F}
	\tkzDefPoint(3,3){G}
	\tkzDefPoint(-1,3){H}
	\draw[fill=blue!15] (-2,-2)--(2,-2)--(3,-1)--(3,3)--(-1,3)--(-2,2)--cycle;
	\tkzDrawSegments[arrowMe=stealth](A,B E,F) 
	\tkzDrawSegments[arrowMe=>>s](A,E B,F)
	\tkzDrawSegments[arrowMe=stealth](D,H C,G)
	\tkzDrawSegments[arrowMe=>>s](C,D G,H)
	\tkzDrawSegment (B,C)
	\tkzDrawSegment (A,D)
	\tkzDrawSegment (F,G)
	\tkzDrawSegment (E,H)
	\tkzDrawSegment[arrowMe=>>>s](A,F)
	\tkzDrawSegment[arrowMe=>>>s](C,H)
	\end{tikzpicture}
	\caption{The cobordism for the operator $S$.}
	\label{Cobordism $X_S$}
	\end{center}
\end{figure}

\begin{figure}[h]
\begin{center}
	\begin{tikzpicture}[scale=.8,baseline]
	\tkzDefPoint(-2,-2){A}
	\tkzDefPoint(2,-2){B}
	\tkzDefPoint(2,2){C}
	\tkzDefPoint(-2,2){D}
	\tkzDefPoint(-1,-1){E}
	\tkzDefPoint(3,-1){F}
	\tkzDefPoint(4,3){G}
	\tkzDefPoint(0,3){H}
	\draw[fill=blue!15] (-2,-2)--(2,-2)--(3,-1)--(4,3)--(0,3)--(-2,2)--cycle;
	\tkzDrawSegments[arrowMe=stealth](A,B E,F) 
	\tkzDrawSegments[arrowMe=>>s](A,E B,F)
	\tkzDrawSegments[arrowMe=>>s](D,H C,G)
	\tkzDrawSegments[arrowMe=stealth](D,C H,G)
	\tkzDrawSegment (B,C)
	\tkzDrawSegment (A,D)
	\tkzDrawSegment (F,G)
	\tkzDrawSegment (E,H)
	\tkzDrawSegment[arrowMe=>>>s](A,F)
	\tkzDrawSegment[arrowMe=>>>s](D,G)
	\end{tikzpicture}
	\caption{The cobordism for the operator $T$.}
	\label{Cobordism $X_T$}
	\end{center}
\end{figure}
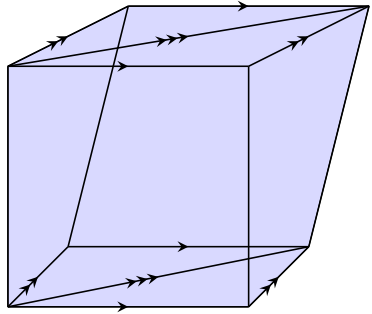

For each edge in these triangulations we assign a state variable. We abuse notation and label an edge and a state variable by the same letter. We assign a multiplier to each edge. As we will see below in Lemma \ref{proofoflemma} and Lemma \ref{proofoflemma2} it turns out, that all internal edges have trivial multiplier. Further we emphasise that there is a direction on each of the two boundary tori where the multiplier is trivial. 

The Teichm\"{u}ller TQFT gives an operator between the vector spaces associated to each of the boundary components. We will see that we get representations $$ \rho : \Gamma_{1,1} \to \mathcal B(C^{\infty}(\mathbb T^3,\mathcal L')), $$ of the mapping class group $\Gamma_{1,1}$ into bounded operators on the smooth sections $C^{\infty}(\mathbb T^3,\mathcal L')$. However, we will show below that we actually get representations into $\mathcal B(\mathcal S( \R))$, bounded operators on the Schwartz space $\mathcal S(\R) $.

\begin{thm} The Teichmüller TQFT provides us with representations (dependent on h)
$$\tilde{\rho} : \Gamma_{1,1} \to \mathcal B(\mathcal S(\R))$$ 
of the mapping class group $\Gamma_{1,1}$ into bounded operators on the Schwarz space $\mathcal S(\R)$.
	In particular we get operators $\tilde{\rho}(S), \tilde{\rho}(T) : \mathcal S(\R) \to \mathcal S(\R)$ according to the diagram \eqref{diag}.
	\begin{align}\label{diag}
	\xymatrix{
		\mathcal S (\R) \ar[r]^{\tilde{\rho}(S), \tilde{\rho}(T)} \ar[d]^{W} & \mathcal S (\R) \ar[d]^{W} \\ 
		C^{\infty}(\mathbb T^2, \mathcal L) \ar[d]^{\pi^*} \ar[r]^{} & C^{\infty}(\mathbb T^2, \mathcal L) \ar[d]^{\pi^*}  \\
		\pi^* \left(C^{\infty}(\mathbb T^2, \mathcal L)\right)  \ar[r]^{} & \pi^* \left(C^{\infty}(\mathbb T^2, \mathcal L)\right) \\
		C^{\infty}(\mathbb T^3, \mathcal L')\ar@{}[u]|-{\displaystyle{\cap}} \ar[r]^{\rho(S)}_{\rho(T)}&\ar@{}[u]|-{\displaystyle\cap} C^{\infty}(\mathbb T^3, \mathcal L')
	}
	\end{align}
	where $\mathcal L' = \pi^*\mathcal L.$ 
\end{thm}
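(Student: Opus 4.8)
The plan is to realise $\rho(S)$ and $\rho(T)$ directly as state integrals of the cobordisms $X_S$ and $X_T$ built in Figures~\ref{Cobordism $X_S$} and~\ref{Cobordism $X_T$}, and then to transport them to $\mathcal{S}(\mathbb{R})$ by conjugating with the Weil--Gel'fand--Zak transform $W$. First I would assign to each tetrahedron of $X_S$ (respectively $X_T$) its Boltzmann weight $g_{a,c}$ as prescribed by the new formulation of Theorem~\ref{thm:AK}, and then integrate out every internal edge variable while retaining the three boundary edge variables of each of the two boundary tori. Following exactly the pattern of the knot-complement computations above --- shifting integration variables, using the Fourier identities for $\tpsi{a}{c}{x}$, and collapsing integer sums to Kronecker deltas through integrations over $[0,1]$ --- this produces an explicit integral kernel, hence a well-defined operator on functions of the three boundary edge variables, i.e.\ on $C^{\infty}(\mathbb{T}^3,\mathcal{L}')$. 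This is the bottom arrow of the diagram~\eqref{diag}.

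Next I would determine the multiplier (quasi-periodicity factor) carried by each edge variable. The assertions recorded in Lemmas~\ref{proofoflemma} and~\ref{proofoflemma2} are that every internal edge has trivial multiplier, which is precisely the condition under which the integer sums built into the weights $g_{a,c}$ converge and the integrations over $[0,1]$ are legitimate, and that on each boundary torus there is a distinguished direction along which the multiplier is again trivial. This trivial direction is what lets the operator on $C^{\infty}(\mathbb{T}^3,\mathcal{L}')$ be pulled back, via $\pi^{*}$, from an operator on $C^{\infty}(\mathbb{T}^2,\mathcal{L})$; it accounts for the two lower squares of~\eqref{diag} and for the passage from the $3$-torus to the $2$-torus.

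With the operators on $C^{\infty}(\mathbb{T}^2,\mathcal{L})$ in hand, I would set $\tilde{\rho}(S):=W^{-1}\rho(S)W$ and $\tilde{\rho}(T):=W^{-1}\rho(T)W$, so that commutativity of the top square of~\eqref{diag} holds by the intertwining property of $W$. Two points then require genuine work. The first is that these conjugated operators actually preserve $\mathcal{S}(\mathbb{R})$ and are bounded; this rests on the rapid decay of Faddeev's quantum dilogarithm $\Phi_{\bq}$ and of its reciprocal along the real axis, which secures absolute convergence of the kernels and control of every Schwartz seminorm. The second is that $S\mapsto\tilde{\rho}(S)$, $T\mapsto\tilde{\rho}(T)$ respects the defining relations of $\Gamma_{1,1}$, so that it extends to a homomorphism and the target is genuinely $\mathcal{B}(\mathcal{S}(\mathbb{R}))$.

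For the relations I would glue the cobordisms corresponding to the two sides of each relation and invoke the $2$--$3$ and $3$--$2$ Pachner invariance of Theorem~\ref{thm:AK}: the two glued shaped triangulations are connected by a sequence of balanced Pachner moves, so their partition functions, and hence the associated operators, agree up to the explicit level and phase factors appearing in that theorem, which is exactly what realises the relation in the framed group. I expect this last step --- matching the two sides of the braid relation between $S$ and $T$ through an explicit chain of shaped Pachner moves while keeping track of the accompanying phase and level corrections --- to be the main obstacle.
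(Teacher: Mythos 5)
Your proposal follows essentially the same route as the paper: construct $\rho(S),\rho(T)$ from the shaped triangulations of the cobordisms $X_S,X_T$, verify the multiplier statements of Lemmas \ref{proofoflemma} and \ref{proofoflemma2}, and descend to $\mathcal S(\R)$ by integrating over the trivial fibre direction and conjugating with the Weil--Gel'fand--Zak transform. The only substantive differences are that the paper isolates the step showing $\rho(S),\rho(T)$ preserve pullback sections in Lemmas \ref{Sop} and \ref{Top} (vanishing of the sum of partial derivatives of the kernels $K_S,K_T$), which you fold into the multiplier discussion, and that your final step of verifying the $\Gamma_{1,1}$ relations via Pachner moves goes beyond what the paper's proof actually records.
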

\begin{proof}
	We know that the Weil--Gel'fand--Zak transformation gives an isomorphism from the Schwarz space to smooth sections of the complex line bundle $\mathcal L$ over the 2-torus. If a section of $C^{\infty}(\mathbb T^2,\mathcal L)$ is pulled back to $\pi^* \left(C^{\infty}(\mathbb T^2, \mathcal L)\right)$ we show in Lemma \ref{Sop} and Lemma \ref{Top} that the operators $\rho(S),\rho(T)$ acting on $C^{\infty}(\mathbb T^3,\mathcal L')$ take this pull back of a section in $\pi^* \left(C^{\infty}(\mathbb T^2, \mathcal L)\right)$. 
	In Lemma \ref{proofoflemma} and \ref{proofoflemma2} we prove that the multipliers on internal edges are trivial. Further we show that the multipliers on the two boundary tori are trivial in the direction $(1,1,1)$. We can therefore integrate over the fibre in this direction. We then use the inverse WGZ transformation. 
	In other words we have shown that the operators $\rho(S),\rho(T)$ induce operators $\tilde \rho(S),\tilde \rho(T):\mathcal S(\R)\to S(\R)$ given by 
	\begin{align*}
	&\tilde \rho(S) = W^{-1} \circ \int_{F_{z'}} \circ \: \rho(S) \circ \pi^* \circ W, \\
	&\tilde \rho(T) = W^{-1} \circ \int_{F_{z'}} \circ \:  \rho(S) \circ \pi^* \circ W.
	\end{align*}
\end{proof}

\begin{rem}
	Above we obtained a representation for the mapping class group $\Gamma_{1,1}$. We do not in a similar manner get a representation for the mapping class group $\Gamma_{1,0}$. The reason is that not all edges in the cobordisms can be balanced without turning to negative angles. 
\end{rem}

\section{Line bundles over the two boundary tori}
Let us here describe how the line bundles we pull back looks like.

Let $\pi : \R^3 \to \R^2$ be defined by $\pi(x_1,x_2,x_3)=(ax_1+bx_2+cx_3,\alpha x_1+\beta x_2+\gamma x_3)$. Recall that we have the relation on multipliers 
\begin{align}\label{pullbackmultiplier}
e^{\pi^*}_{\lambda}(x,y,z)= e_{\pi(\lambda)}(\pi(x,y,z)).
\end{align}

Note that the map $\pi$ sends $\lambda_{x_1}=(1,0,0),\; \lambda_{x_2}=(0,1,0),\; \lambda_{x_3}=(0,0,1)$ to the following elements of $\R^2$
$$ \pi(\lambda_{x_1})=(a,\alpha), \quad \pi(\lambda_{x_2})=(b,\beta), \quad \pi(\lambda_{x_3})=(c,\gamma).$$
The equation \eqref{pullbackmultiplier} gives the following relations:

\begin{align*}
\textbf{In the $\lambda_{x_1}$-direction}\\
e^{2\pi i (x_3-x_2)}=&e_{(1,0,0)(x_1,x_2,x_3)}=e_{(a,\alpha)}(ax_1+bx_2+cx_3,\alpha x_1+\beta x_2+ \gamma x_3) \\
=& e_{(a,0)}(ax_1+bx_2+cx_3,\alpha x_1+\beta x_2+ \gamma x_3) \\
&e_{(0,\alpha)}(ax_1+bx_2+cx_3,\alpha (x_1+1)+\beta x_2+ \gamma x_3) \\
=&e^{-\pi i a(\alpha x_1+\beta x_2+ \gamma x_3)}e^{\pi i (ax_1+bx_2+cx_3)}\\=&e^{\pi i ((\alpha b-a\beta)x_2+(\alpha c - a\gamma)x_3 )}, \\
\textbf{In the $\lambda_{x_2}$-direction}\\
e^{2\pi i (x_1-x_3)}=&e_{(0,1,0)(x_1,x_2,x_3)}=e^{\pi i ((\beta a- \alpha b)x_1+(\beta c-b\gamma )x_3 )}, \\
\textbf{In the $\lambda_{x_3}$-direction}\\
e^{2\pi i (x_2-x_1)}=&e_{(0,0,1)(x_1,x_2,x_3)}=e^{\pi i ((\gamma a- \alpha c)x_1+(\gamma b -c \beta )x_2 )}.
\end{align*}
In other words we only need to solve the three equations 
\begin{align}
\alpha b - a\beta = -2, \quad \alpha c- a\gamma =2, \quad \beta c - b \gamma =-2.
\end{align}
One particular solution is $a=-2,b=0,c=2, \alpha=0,\beta=-1,\gamma=1$ which gives the map 
$$ \pi(x_1,x_2,x_3)=(-2x_1+2x_3,-x_2+x_3).$$


\section{The operator $\rho(S)$}
The operator $\rho(S)$ can be viewed as the cobordism $X_S$ which is triangulated into 6 tetrahedra $T_1,\dots, T_6$ where $T_1,T_3,T_4,T_6$ have positive orientation and the tetrahedra $T_2,T_5$ have negative orientation. See the gluing pattern in figure \ref{gluings}.

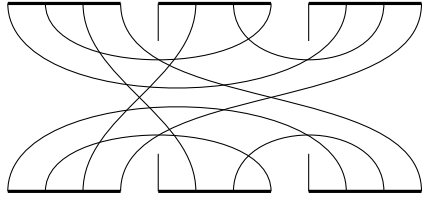
\begin{figure}[H]
\begin{center}
	\begin{tikzpicture}[scale=.5,baseline=5]
	\draw[very thick] (0,0)--(3,0);
	\draw[very thick] (4,0)--(7,0);
	\draw[very thick] (8,0)--(11,0);
	\draw[very thick] (0,5)--(3,5);
	\draw[very thick] (4,5)--(7,5);
	\draw[very thick] (8,5)--(11,5);
	\draw(0,0)..controls (0,3) and (9,3)..(9,0);
	\draw(1,0)..controls (1,2) and (7,2)..(7,0);
	\draw(2,0)..controls (2,2) and (5,3)..(5,5);
	\draw(3,0)..controls (3,3) and (11,2)..(11,5);
	\draw(5,0)..controls (5,2) and (2,3)..(2,5);
	\draw(6,0)..controls (6,2) and (10,2)..(10,0);
	\draw(11,0)..controls (11,3) and (3,2)..(3,5);
	\draw(0,5)..controls (0,2) and (9,2)..(9,5); 
	\draw(1,5)..controls (1,3) and (7,3)..(7,5);
	\draw(6,5)..controls (6,3) and (10,3)..(10,5);
	\draw (4,0)--(4,1);
	\draw (8,0)--(8,1);
	\draw (4,5)--(4,4);
	\draw (8,5)--(8,4);
	\end{tikzpicture}
	\caption{Gluing pattern for the operator $X_S$}
	\label{gluings}
	\end{center}
\end{figure}

In the triangulation we have ten edges $x_1,x_2\dots,x_7,x_1',x_2',x_3'$. To each of the edges on the boundary we associate the a weight function:
\begin{align*}
&\omega_{X_S}(x_1)=2\pi (a_1+a_5+c_3), \quad &&\omega_{X_S}(x_2)=2\pi (a_4+c_5+a_6), \quad &&&\omega_{X_S}(x_3)=2\pi (b_5+b_6), \\
&\omega_{X_S}(x_1')=2\pi (a_1+c_2+a_3), \quad &&\omega_{X_S}(x_2')=2\pi (a_2+c_3+a_4), \quad &&&\omega_{X_S}(x_3')=2\pi (b_2+b_3).
\end{align*}
and to the edges $x_4,x_5,x_6,x_7$ we associate the weight functions:
\begin{align*}
&\omega_{X_S}(x_4)=2\pi (a_1+c_2+b_4+c_5+c_6), \quad &&\omega_{X_S}(x_5)=2\pi (c_1+b_3+b_4+a_5+a_6), \\
&\omega_{X_S}(x_6)=2\pi (b_1+a_2+a_3+c_4+b_6), \quad &&\omega_{X_S}(x_7)=2\pi (b_1+c_2+c_3+c_4+b_5).
\end{align*}
\subsection{Boltzmann weights}
The Bolzman weights assigned to the tetrahedra are
\begin{align*}
&B\left(T_1,x_{\vert_{\Delta_{1}(T_1)}}\right)=g_{a_1,c_1}(x_7+x_6-x_4-x_5,x_7+x_6-x_1'-x_1), \\
&B\left(T_2,x_{\vert_{\Delta_{1}(T_2)}}\right)=\overline{g_{a_2,c_2}(x_3'+x_4-x_1'-x_7,x_3'+x_4-x_2'-x_6)}, \\
&B\left(T_3,x_{\vert_{\Delta_{1}(T_3)}}\right)=g_{a_3,c_3}(x_3'+x_5-x_2'-x_7,x_3'+x_5-x_1'-x_6) \\
&B\left(T_4,x_{\vert_{\Delta_{1}(T_4)}}\right)=g_{a_4,c_4}(x_5+x_4-x_7-x_6,x_5+x_4-x_2'-x_2) \\
&B\left(T_5,x_{\vert_{\Delta_{1}(T_5)}}\right)=\overline{g_{a_5,c_5}(x_7+x_3-x_4-x_2,x_5+x_4-x_5-x_1) }\\
&B\left(T_6,x_{\vert_{\Delta_{1}(T_6)}}\right)=g_{a_6,c_6}(x_6+x_3-x_4-x_1,x_6+x_3-x_5-x_2).
\end{align*}



\begin{lemma}\label{proofoflemma}
	The multipliers corresponding to the edges are calculated to be 1 for the internal edges $x_4,x_5,x_6,x_7$. And the multipliers for the remaining 6 edges are calculated to be
	\begin{align*}
	e_{\lambda_{x_1}}(\textbf{x})&=e^{2\pi i (x_3-x_2)}, \quad e_{\lambda_{x_2}}(\textbf{x})=e^{2\pi i (x_1-x_3)},\quad e_{\lambda_{x_3}}(\textbf{x})=e^{2\pi i (x_2-x_1)}, \\
	e_{\lambda_{x_1'}}(\textbf{x})&=e^{2\pi i (x_2'-x_3')}, \quad e_{\lambda_{x_2'}}(\textbf{x})=e^{2\pi i (x_3'-x_1')},\quad e_{\lambda_{x_3'}}(\textbf{x})=e^{2\pi i (x_1'-x_2')},
	\end{align*}
	where \textbf{x} denotes the tuple $\textbf{x}=(x_1,x_2,x_3,x_1',x_2',x_3').$
\end{lemma}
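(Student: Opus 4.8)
The plan is to reduce all ten multiplier computations to the quasi-periodicity of the Boltzmann weight $g_{a,c}$ under unit shifts of its two arguments, and then to a purely combinatorial bookkeeping over the six tetrahedra. First I would record the two basic quasi-periodicity relations. Reindexing the summation variable $m\mapsto m+1$ in
\[
g_{a,c}(s,t)=\sum_{m\in\Z}\tpsi{a}{c}{s+m}e^{\pi i t(s+2m)}
\]
and using $e^{2\pi i m}=1$ gives
\[
g_{a,c}(s+1,t)=e^{-\pi i t}g_{a,c}(s,t),\qquad g_{a,c}(s,t+1)=e^{\pi i s}g_{a,c}(s,t),
\]
both independent of the shape parameters $a,c$. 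Iterating, an integer shift $s\mapsto s+p$, $t\mapsto t+r$ multiplies $g_{a,c}$ by $e^{\pi i(rs-pt-pr)}$, where the cross term $e^{-\pi i pr}$ is merely a sign $(-1)^{pr}$; for a negatively oriented tetrahedron, whose weight is $\overline{g_{a,c}}$, the factor is the complex conjugate.

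Next I would use that in each Boltzmann weight the two arguments $s_i,t_i$ are integer linear combinations of the edge variables, so shifting a single edge variable $x_j\mapsto x_j+1$ shifts every $s_i,t_i$ by an integer. Hence the multiplier $e_{\lambda_{x_j}}(\mathbf x)$ is the product over the six tetrahedra of the factors above, conjugated for $T_2$ and $T_5$. Writing $a_i=\partial_{x_j}s_i$ and $b_i=\partial_{x_j}t_i$ for the coefficients read off from the displayed weights $B(T_i,\cdot)$, one has
\[
e_{\lambda_{x_j}}(\mathbf x)=\prod_{i}\exp\!\big(\varepsilon_i\,\pi i(b_i s_i-a_i t_i-a_i b_i)\big),
\]
with $\varepsilon_i=+1$ for the positive tetrahedra $T_1,T_3,T_4,T_6$ and $\varepsilon_i=-1$ for the negative ones $T_2,T_5$. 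The whole computation then becomes: for each of the ten edges, collect these coefficients and simplify the resulting linear-plus-constant exponent.

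I would treat the two families separately. For an internal edge $x_4,x_5,x_6,x_7$ the claim is that the exponent vanishes modulo $2$: the linear-in-$\mathbf x$ part must cancel identically and the constant $\sum_i a_i b_i$ must be even, so that no spurious sign survives. This cancellation is forced by the gluing pattern of Figure \ref{gluings}, in which each internal edge occurs among the arguments of several tetrahedra in canceling combinations, and is where the orientation signs $\varepsilon_i$ must be tracked with care. For a boundary edge $x_1,x_2,x_3$ (resp.\ $x_1',x_2',x_3'$) the same bookkeeping leaves a nontrivial residual exponent, which I would simplify to the stated $e^{2\pi i(x_3-x_2)}$, $e^{2\pi i(x_1-x_3)}$, $e^{2\pi i(x_2-x_1)}$ (resp.\ the primed analogues); the antisymmetric cyclic pattern reflects that the three edges of each boundary triangle enter symmetrically.

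The main obstacle is the bookkeeping itself: there are ten edges and six tetrahedra, the two negatively oriented tetrahedra contribute conjugated factors, and one must verify both that the continuous (linear-in-$\mathbf x$) contributions cancel exactly on the internal edges and that the residual constants are even. No single step is deep — since the shape parameters drop out entirely, the statement is topological and depends only on the incidence data — but the cancellations rely on the precise gluing of Figure \ref{gluings}, and an error in any one coefficient would destroy either the triviality on internal edges or the clean cyclic form on the boundary.
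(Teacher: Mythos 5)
Your reduction is exactly the paper's: the per-tetrahedron factors $e^{\pi i(rs-pt-pr)}$ you derive by reindexing the sum defining $g_{a,c}$ are precisely the multipliers $e_{(p,r)}$ that the paper's proof invokes (conjugated for the negatively oriented $T_2,T_5$), and both arguments then multiply these factors over the tetrahedra incident to each edge. The paper only executes the bookkeeping for the single internal edge $x_4$ and declares the remaining nine cases analogous, so your proposal --- which makes the quasi-periodicity explicit but likewise defers the case-by-case verification to the incidence data of Figure \ref{gluings} --- is correct and at essentially the same level of completeness as the paper's own proof.
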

\begin{proof}
	Let us here just calculate the multiplier for the direction $x_4$. The rest follows by analogous calculations.
	The edge $x_4$ is an edge in the tetrahedra $T_1,T_2,T_4,T_5,T_6$ each contributing to the multiplier. 
	The contribution from $T_1$ corresponds to the multiplier
	\begin{align*}
	e_{\lambda_{x_4}}(x_1,x_2,\dots,x_7,x_1',x_2',x_3')&=e_{-(1,0)}(x_5+x_4-x_7-x_6,x_5+x_4-x_2'-x_2)\\&=e^{\pi i (x_7+x_6-x_1'-x_1)}.
	\end{align*}
	The contribution from $T_2$ is
	\begin{align*}
	e_{\lambda_{x_4}}(x_1,x_2,\dots,x_7,x_1',x_2',x_3')&=\overline{e_{(1,1)}(x_3'+x_4-x_1'-x_7,x_3'+x_4-x_2'-x_6)}\\&=-e^{-\pi i (x_2'+x_6-x_1'-x_7)}.
	\end{align*}
	The contribution from $T_4$  is
	\begin{align*}
	e_{\lambda_{x_4}}(x_1,x_2,\dots,x_7,x_1',x_2',x_3')&={e_{(1,1)}(x_5+x_4-x_7-x_6,x_5+x_4-x_2'-x_2)}\\&=-e^{\pi i (x_2'+x_2-x_6-x_7)}.
	\end{align*}
	The contribution from $T_5$ is
	\begin{align*}
	e_{\lambda_{x_4}}(x_1,x_2,\dots,x_7,x_1',x_2',x_3')=-e^{-\pi i (x_7+x_3-x_5-x_1)}.
	\end{align*}
	The contribution from $T_6$ is
	\begin{align*}
	e_{\lambda_{x_4}}(x_1,x_2,\dots,x_7,x_1',x_2',x_3')=-e^{\pi i (x_6+x_3-x_5-x_2)}.
	\end{align*}
	Multiplying these contributions gives $e^0=1.$
\end{proof}
We remark that the multiplier on each boundary component in direction $(1,1,1)$ is trivial. 

We are interested in how the operator $\rho(S)$ acts. We express the operator $\rho(S)$ in terms of the integral kernel $K_S$. The operator $\rho(S)$ acts on sections in the following manner
\begin{equation}
\rho(S)(s)(x'_1,x'_2,x'_3) = \int_{[0,1]^3}K_S(x'_1,x_2',x_3',x_1,x_2,x_3)s(x_1,x_2,x_3)  \:dx_1dx_2dx_3.
\end{equation} 

We want to show that the operator $S$ takes the pull back of a section to the pull back of a section. Using integration by parts it is enough to check that the sum of partial derivatives disappear.
\begin{lemma}\label{Sop} The sum of the partial derivatives of $K_S$ disappears. I.e.
	\begin{align*}
	\frac{\partial K_S}{\partial x_1'}+\frac{\partial K_S}{\partial x_2'}+\frac{\partial K_S}{\partial x_3'}+\frac{\partial K_S}{\partial x_1}+\frac{\partial K_S}{\partial x_2}+\frac{\partial K_S}{\partial x_3}=0.
	\end{align*}
\end{lemma}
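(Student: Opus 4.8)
The plan is to exploit two structural features of the integral kernel. Recall that $K_S$ is obtained by multiplying the six Boltzmann weights $B(T_i,\cdot)$ and integrating out the four internal edge variables,
$$K_S = \int_{[0,1]^4} \prod_{i=1}^{6} B\!\left(T_i, x\vert_{\Delta_1(T_i)}\right) dx_4\,dx_5\,dx_6\,dx_7.$$
Write $I$ for this integrand, and abbreviate $D := \partial_{x_1}+\partial_{x_2}+\partial_{x_3}+\partial_{x_1'}+\partial_{x_2'}+\partial_{x_3'}$ for the operator appearing in the statement and $D_{\mathrm{int}} := \partial_{x_4}+\partial_{x_5}+\partial_{x_6}+\partial_{x_7}$ for its internal counterpart, so that the goal is $D K_S = 0$.

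First I would observe that every argument of every Boltzmann weight is a linear combination of edge variables whose coefficients sum to zero: reading off the six weights listed above, each of the two slots of each $g_{a_i,c_i}$ has the shape (two edge variables added) minus (two edge variables added). It follows that $I$ is invariant under the simultaneous diagonal shift $x_e \mapsto x_e + \epsilon$ applied to all ten edge variables at once (internal and external together). Differentiating this invariance at $\epsilon = 0$ gives the key identity
$$(D + D_{\mathrm{int}})\,I = 0, \qquad\text{i.e.}\qquad D\,I = -\,D_{\mathrm{int}}\,I.$$

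Next I would differentiate under the integral sign, which is justified by the absolute convergence of the shaped state integral, and insert this identity:
$$D K_S = \int_{[0,1]^4} (D I)\,dx_4\cdots dx_7 = -\int_{[0,1]^4} (D_{\mathrm{int}} I)\,dx_4\cdots dx_7.$$
For each internal edge $e\in\{4,5,6,7\}$, integrating the term $\partial_{x_e} I$ first in the $x_e$-direction leaves the boundary difference $I\vert_{x_e=1}-I\vert_{x_e=0}$; and by Lemma \ref{proofoflemma} the multiplier attached to each internal edge is trivial, so $I$ is genuinely $1$-periodic in each of $x_4,\dots,x_7$. Hence every such boundary difference vanishes, the right-hand side is zero, and $D K_S = 0$ as claimed.

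The step I expect to carry the weight of the argument is the verification in the second paragraph that \emph{all} twelve arguments (two per tetrahedron) are balanced, since a single slot with nonzero coefficient sum would break the diagonal invariance and the whole mechanism; this is a finite but essential check against the explicit Boltzmann weights. The remaining analytic points — exchanging $D$ with the integral and discarding the internal boundary terms — are then immediate, the first from the convergence of the state integral and the second from the triviality of the internal multipliers in Lemma \ref{proofoflemma}.
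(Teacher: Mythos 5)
Your proof is correct, but it follows a genuinely different route from the one in the paper. The paper argues term by term: after carrying out some of the internal integrations it decomposes $K_S$ into explicit pieces (the $I_1$, $I_2$, $I_3$ of its proof together with the remaining factors), computes the six boundary partial derivatives of each piece, and observes that each piece depends on $x_1,x_2,x_3,x_1',x_2',x_3'$ only through combinations whose coefficients sum to zero, so the diagonal derivative of every piece vanishes separately. You instead work with the raw integrand before any internal integration, note that \emph{every} slot of every $g_{a_i,c_i}$ is a zero-sum combination of all ten edge variables, deduce the identity $(D+D_{\mathrm{int}})I=0$ from invariance under the simultaneous diagonal shift, and then discard $\int D_{\mathrm{int}}I$ by integration by parts using the triviality of the internal multipliers from Lemma \ref{proofoflemma}. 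Your version buys conceptual clarity and uniformity: it isolates exactly the two inputs needed (balanced slots and trivial internal multipliers), avoids the explicit bookkeeping of the paper's decomposition (which, as printed, contains several typos), and applies verbatim to the kernel $K_T$, i.e.\ it also proves Lemma \ref{Top}. The paper's version is more elementary in that it never needs to justify moving the derivative past the internal integrations, though your appeal to the absolute convergence of the shaped state integral adequately covers that point. The one essential check you flag --- that all twelve slots are balanced --- does hold for the Boltzmann weights as listed, so the argument goes through.
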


\begin{proof}
	Let 
	\begin{align*}
	I_3^{n,m,k,j}(x_1,x_2,x_3,x_2',x_3'):=\int_{[0,1]^2}&\tpsi{a_1}{c_1}{x_7+k} \tpsi{a_4}{c_4}{-x_7+n} \\
	& e^{2\pi i x_7(x_2-x_3+x_7+x_5+2n-m+k-j)} \\&
	e^{2\pi i (x_3'-x_2'-x_3+x_1+k-j)} dx_5dx_7
	\end{align*}
	The partial derivatives of $I_3$ with respect to $x_1,x_2,x_3,x_2',x_3'$ are easily calculated to be
	\begin{align*}
	\frac{\partial}{\partial x_1} I_{3}^{n,m,k,j}(x_1,x_2,x_3,x_2',x_3')&=2\pi i x_5 I_3(x_1,x_2,x_3,x_2',x_3')=:I_3'(x_1,x_2,x_3,x_2',x_3'),\\
	\frac{\partial}{\partial x_2} I_{3}^{n,m,k,j}(x_1,x_2,x_3,x_2',x_3')&=2\pi i x_7 I_3(x_1,x_2,x_3,x_2',x_3')=:I_3''(x_1,x_2,x_3,x_2',x_3'),\\
	\frac{\partial}{\partial x_3} I_{3}^{n,m,k,j}(x_1,x_2,x_3,x_2',x_3')&=-I_3'(x_1,x_2,x_3,x_2',x_3')-I_3''(x_1,x_2,x_3,x_2',x_3'),\\
	\frac{\partial}{\partial x_2'}I_3^{n,m,k,j}(x_1,x_2,x_3,x_2',x_3') &=-I_3'(x_1,x_2,x_3,x_2',x_3'),\\
	\frac{\partial}{\partial x_3'}I_3^{n,m,k,j}(x_1,x_2,x_3,x_2',x_3') &=I_3'(x_1,x_2,x_3,x_2',x_3').
	\end{align*} 
	
	The partial derivatives of $I_2$ with respect to the variables $x_2,x_3,x_1',x_3'$ are 
	\begin{align*}
	\frac{\partial}{\partial x_2}I_2^{k,l,n,p}(x_2,x_3,x_1',x_3') = &\frac{e^{2\pi i (x_1'-x_3'-x_3+x_2+2(k,l,n,p))}(x_1'-x_3'-x_3+x_2+2(k,l,n,p))}{(x_1'-x_3'-x_3+x_2+2(k,l,n,p))^2} \\
	&- \frac{( e^{2\pi i (x_1'-x_3'-x_3+x_2+2(k,l,n,p))}-1)}{2\pi i (x_1'-x_3'-x_3+x_2+2(k,l,n,p))^2} \\
	=&: I_2'(x_2,x_3,x_1',x_3'), \\
	\frac{\partial}{\partial x_3}I_2^{k,l,n,p}(x_2,x_3,x_1',x_3') =& -I_2'(x_2,x_3,x_1',x_3'),\\
	\frac{\partial}{\partial x_1'}I_2^{k,l,n,p}(x_2,x_3,x_1',x_3') =& I_2'(x_2,x_3,x_1',x_3'),\\
	\frac{\partial}{\partial x_3'}I_2^{k,l,n,p}(x_2,x_3,x_1',x_3') = &-I_2'(x_2,x_3,x_1',x_3').
	\end{align*}
	The partial derivatives of $I_1$ with respect to the variables $x_2,x_3,x_1',x_3'$ are
	\begin{align*}
	\frac{\partial}{\partial x_2}I_1^{l,p}(x_2,x_3,x_1',x_3') = &-\frac{e^{2\pi i (x_3'-x_1'-x_2+x_3+2(m+q))}(x_3'-x_1'-x_2+x_3+2(m+q))}{(x_3'-x_1'-x_2+x_3+2(m+q))^2} \\
	&+ \frac{( e^{2\pi i (x_3'-x_1'-x_2+x_3+2(m+q))}-1)}{2\pi i (x_3'-x_1'-x_2+x_3+2(m+q))^2} \\
	= &I_1'(x_2,x_3,x_1',x_3'),\\
	\frac{\partial}{\partial x_3}I_1^{l,p}(x_2,x_3,x_1',x_3') = &-I_1'(x_2,x_3,x_1',x_3'),\\
	\frac{\partial}{\partial x_1'}I_1^{l,p}(x_2,x_3,x_1',x_3') = &I_1'(x_2,x_3,x_1',x_3'),\\
	\frac{\partial}{\partial x_3'}I_1^{l,p}(x_2,x_3,x_1',x_3') = &-I_1'(x_2,x_3,x_1',x_3').
	\end{align*}
	The rest of the terms in $K_S$ all depends on pairs of the variables $x_1,x_2,x_3,x_1',x_2',x_3'$ with opposite sign, summing all contributions together therefore shows that the sum of the partial derivatives disappears.
\end{proof}

\section{The operator $\rho(T)$}
The operator $\rho(T)$ is the TQFT operator associated to the cobordism $X_T$ which is triangulated into 6 tetrahedra $T_1,\dots, T_6$ where $T_1,T_4,T_5$ have negative orientation and the tetrahedra $T_2,T_3,T_6$ have positive orientation. See Figure \ref{gluingt}.

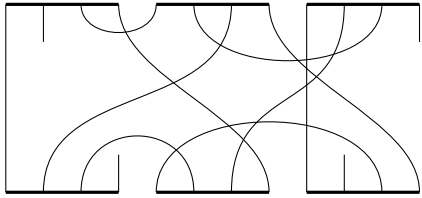
\begin{figure}[H]
\begin{center}
	\begin{tikzpicture}[scale=.5,baseline=5]
	\draw[very thick] (0,0)--(3,0);
	\draw[very thick] (4,0)--(7,0);
	\draw[very thick] (8,0)--(11,0);
	\draw[very thick] (0,5)--(3,5);
	\draw[very thick] (4,5)--(7,5);
	\draw[very thick] (8,5)--(11,5);
	\draw (0,0)--(0,5);
	\draw (8,0)--(8,5);
	\draw (9,0)--(9,1);
	\draw (3,0)--(3,1);
	\draw (1,4)--(1,5);
	\draw (11,4)--(11,5);
	\draw(1,0)..controls (1,3) and (6,2)..(6,5);
	\draw(2,0)..controls (2,2) and (5,2)..(5,0);
	\draw(4,0)..controls (4,2.5) and (10,2.5)..(10,0);
	\draw(6,0)..controls (6,3) and (9,2)..(9,5);
	\draw(7,0)..controls (7,2) and (3,3)..(3,5);
	\draw(11,0)..controls (11,2) and (7,3)..(7,5);
	\draw(2,5)..controls (2,4) and (4,4)..(4,5);
	\draw(5,5)..controls (5,3) and (10,3)..(10,5);
	\end{tikzpicture}
	\caption{Gluing pattern for the operator $X_T$.}
	\label{gluingt}
	\end{center}
\end{figure}

%

In the triangulation we have ten edges $x_1,x_2\dots,x_7,x_1',x_2',x_3'$.
The weight functions corresponding to this triangulation for the edges $x_1,x_2,x_3,x_1',x_2',x_3'$ are 
\begin{align*}
&\omega_{Y_T}(x_1)=2\pi (c_3+a_6), \quad &&\omega_{Y_T}(x_2)=2\pi (b_2+a_3+b_6), \quad &&&\omega_{Y_T}(x_3)=2\pi (b_3+b_5+c_6), \\
&\omega_{Y_T}(x_1')=2\pi (a_1+c_4), \quad &&\omega_{Y_T}(x_2')=2\pi (b_1+a_4+b_5), \quad &&&\omega_{Y_T}(x_3')=2\pi (c_1+b_2+b_4).
\end{align*}
and to the edges $x_4,x_5,x_6,x_7$ we associate the weight functions
\begin{align*}
&\omega_{Y_T}(x_4)=2\pi (a_1+c_2+c_5+a_6), \quad &&\omega_{Y_T}(x_5)=2\pi (b_1+a_2+b_3+b_4+a_5+b_6), \\
&\omega_{Y_T}(x_6)=2\pi (c_1+a_2+a_3+a_4+a_5+c_6), \quad &&\omega_{Y_T}(x_7)=2\pi (b_2+c_3+c_4+c_5).
\end{align*}


The Bolzman weights assigned to the tetrahedra are
\begin{align*}
&B\left(T_1,x_{\vert_{\Delta_{1}(T_1)}}\right)=\overline{g_{a_1,c_1}(x_5+x_2'-x_3'-x_6,x_5+x_2'-x_1'-x_4)}, \\
&B\left(T_2,x_{\vert_{\Delta_{1}(T_2)}}\right)={g_{a_2,c_2}(x_3'+x_2-x_7-x_4,x_3'+x_2-x_5-x_6)}, \\
&B\left(T_3,x_{\vert_{\Delta_{1}(T_3)}}\right)=g_{a_3,c_3}(x_5+x_3-x_7-x_1,x_5+x_3-x_6-x_2) \\
&B\left(T_4,x_{\vert_{\Delta_{1}(T_4)}}\right)=\overline{g_{a_4,c_4}(x_3'+x_5-x_7-x_1',x_3+x_5-x_2'-x_6)} \\
&B\left(T_5,x_{\vert_{\Delta_{1}(T_5)}}\right)=\overline{g_{a_5,c_5}(x_2'+x_3-x_7-x_4,x_2'+x_3-x_6-x_5) }\\
&B\left(T_6,x_{\vert_{\Delta_{1}(T_6)}}\right)=g_{a_6,c_6}(x_2+x_5-x_3-x_6,x_2+x_5-x_4-x_1).
\end{align*}

\begin{lemma}\label{proofoflemma2}
	The multipliers corresponding to the edges are calculated to be 1 for the internal edges $x_4,x_5,x_6,x_7$. And the multipliers for the remaining 6 edges are calculated to be
	\begin{align*}
	e_{\lambda_{x_1}}(\textbf{x})&=e^{2\pi i (x_3-x_2)}, \quad e_{\lambda_{x_2}}(\textbf{x})=e^{2\pi i (x_1-x_3)},\quad e_{\lambda_{x_3}}(\textbf{x})=e^{2\pi i (x_2-x_1)}, \\
	e_{\lambda_{x_1'}}(\textbf{x})&=e^{2\pi i (x_2'-x_3')}, \quad e_{\lambda_{x_2'}}(\textbf{x})=e^{2\pi i (x_3'-x_1')},\quad e_{\lambda_{x_3'}}(\textbf{x})=e^{2\pi i (x_1'-x_2')},
	\end{align*}
	where \textbf{x} denotes the tuple $\textbf{x}=(x_1,x_2,x_3,x_1',x_2',x_3').$
\end{lemma}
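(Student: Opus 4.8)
\section*{Proof proposal for Lemma~\ref{proofoflemma2}}

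The plan is to repeat the computation of Lemma~\ref{proofoflemma} verbatim, the only change being that the Boltzmann weights are now those of the cobordism $X_T$ listed above. The whole argument rests on the two quasi-periodicity relations satisfied by $g_{a,c}$. Starting from $g_{a,c}(s,t)=\sum_{m\in\Z}\tpsi{a}{c}{s+m}e^{\pi i t(s+2m)}$, reindexing the sum by $m\mapsto m-1$ gives $g_{a,c}(s+1,t)=e^{-\pi i t}g_{a,c}(s,t)$, while the identity $e^{\pi i(s+2m)}=e^{\pi i s}$ gives $g_{a,c}(s,t+1)=e^{\pi i s}g_{a,c}(s,t)$. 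Iterating and combining these yields, for any integers $a,b$,
\begin{equation*}
g_{a,c}(s+a,t+b)=(-1)^{ab}e^{\pi i(bs-at)}g_{a,c}(s,t),
\end{equation*}
and for a negatively oriented tetrahedron one takes the complex conjugate of the right-hand side. This is the master formula from which every entry in the statement will be read off.

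Next I would turn the shift $x_i\mapsto x_i+1$ into a product of such factors. If an edge variable $x_i$ enters the two slots of a weight $g_{a_k,c_k}(s_k,t_k)$ with integer coefficients $\epsilon_s,\epsilon_t$, then $T_k$ contributes the multiplier $(-1)^{\epsilon_s\epsilon_t}e^{\pi i(\epsilon_t s_k-\epsilon_s t_k)}$ (conjugated when $T_k$ is negative), and $e_{\lambda_{x_i}}(\mathbf{x})$ is the product over all tetrahedra containing $x_i$. For an internal edge one reads off from the weight functions $\omega_{Y_T}$ which tetrahedra are involved: for instance $x_4$ sits in $T_1,T_2,T_5,T_6$, entering each in a single slot, so no signs occur, and the four exponents
\begin{align*}
&+(x_5+x_2'-x_3'-x_6)\quad(T_1),\qquad +(x_3'+x_2-x_5-x_6)\quad(T_2),\\
&-(x_2'+x_3-x_6-x_5)\quad(T_5),\qquad -(x_2+x_5-x_3-x_6)\quad(T_6)
\end{align*}
sum to zero, giving $e_{\lambda_{x_4}}(\mathbf{x})=1$. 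The edges $x_5,x_6,x_7$ are handled identically, the cancellation being forced by the fact that each internal edge is completely surrounded by the tetrahedra meeting along it.

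Finally, for the six boundary edges the same bookkeeping leaves an uncancelled remainder equal to the claimed multipliers $e^{2\pi i(x_3-x_2)}$, $e^{2\pi i(x_1-x_3)}$, and so on; here one must also verify that the accumulated signs $(-1)^{\sum\epsilon_s\epsilon_t}$ coming from any edge that occupies both slots of a weight combine to $+1$, exactly as the four sign flips did in Lemma~\ref{proofoflemma}. I expect no conceptual difficulty: the only genuine hazard is the arithmetic bookkeeping---recording, for each of the ten edges, its coefficient in each slot of each of the six Boltzmann weights together with the orientation of the tetrahedron. Since the triangulation of $X_T$ induces on its two boundary tori the same two-triangle, three-edge decomposition as $X_S$, I expect the boundary multipliers to come out identical to those of Lemma~\ref{proofoflemma}, which is the stated conclusion and is consistent with both cobordisms being built on the same triangulated torus $\mathbb T^2$.
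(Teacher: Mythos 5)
Your proposal is correct and follows essentially the same route as the paper: the paper's own proof of Lemma \ref{proofoflemma2} simply declares the verification analogous to that of Lemma \ref{proofoflemma}, which is exactly the contribution-by-contribution multiplication of quasi-periodicity factors of $g_{a,c}$ that you carry out. Your explicit check for $x_4$ (reading off that it enters $T_1,T_2,T_5,T_6$ each in a single slot and summing the four exponents to zero) is in fact more detailed than what the paper records.
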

\begin{proof}
	The proof is straight forward verification. The computations are analogue to the calculations in the proof of Lemma \ref{proofoflemma}.
\end{proof}

Again, in order to check that the operator $\rho(T)$ takes the pull back of a section to a pull back of a section we show the following Lemma.
\begin{lemma}\label{Top} The sum of the partial derivatives of $K_T$ disappears. I.e.
	\begin{align*}
	\frac{\partial K_T}{\partial x_1'}+\frac{\partial K_T}{\partial x_2'}+\frac{\partial K_T}{\partial x_3'}+\frac{\partial K_T}{\partial x_1}+\frac{\partial K_T}{\partial x_2}+\frac{\partial K_T}{\partial x_3}=0
	\end{align*}
\end{lemma}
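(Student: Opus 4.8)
The plan is to follow verbatim the strategy of Lemma~\ref{Sop}, exploiting the fact that the operators $\rho(S)$ and $\rho(T)$ are assembled from Boltzmann weights of exactly the same shape. First I would form the kernel $K_T$ as the product $\prod_{j=1}^{6} B(T_j,\cdot)$ of the six Boltzmann weights listed above and integrate out the four internal edge variables $x_4,x_5,x_6,x_7$. As in the $S$-case, integration over two of these internal variables (together with the summations over the integers appearing in the definition of $g_{a,c}$) produces delta factors that collapse two of the indices; what remains is a nested family of integrals---the analogues of $I_1,I_2,I_3$ from the proof of Lemma~\ref{Sop}---each depending only on balanced combinations of the six boundary variables $x_1,x_2,x_3,x_1',x_2',x_3'$.

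The conceptual reason for the vanishing is the following. Every argument feeding into a $g_{a,c}(s,t)$ in the list for $T_1,\dots,T_6$ is a \emph{balanced} linear form, i.e.\ a sum of two edge variables minus two others, so the coefficients of all ten edge variables sum to zero (this is precisely the bookkeeping recorded in Lemma~\ref{proofoflemma2}). Consequently the product $\prod_j B(T_j,\cdot)$ is invariant under the simultaneous diagonal translation of all ten edge variables, and the directional derivative $\sum_{\text{all edges}}\partial$ annihilates it. Writing this total field as $D+(\partial_{x_4}+\partial_{x_5}+\partial_{x_6}+\partial_{x_7})$, where $D$ denotes the boundary operator in the statement, I would deduce that $D$ applied to the integrand equals minus its internal-variable part; since $D$ commutes with the integration over $x_4,\dots,x_7$ and the internal total derivatives integrate to boundary contributions that cancel by the (quasi-)periodicity and translation invariance inherited from the balanced structure, it follows that $D K_T=0$.

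Concretely, I would verify this by differentiating the nested blocks exactly as in Lemma~\ref{Sop}: each boundary derivative of a block yields a single primed integrand $J_i'$ carrying the sign with which the differentiating variable enters the relevant balanced argument, so within every block the six boundary derivatives organise into matched $+J_i'$ and $-J_i'$ pairs. Summing $\partial_{x_1}+\partial_{x_2}+\partial_{x_3}+\partial_{x_1'}+\partial_{x_2'}+\partial_{x_3'}$ over all blocks, the surviving exponential prefactors---which, as in the $S$-computation, depend on pairs of boundary variables with opposite signs---contribute a net coefficient of zero to each primed term, and everything cancels.

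The main obstacle I anticipate is purely the bookkeeping in the first step: tracking the index shifts and delta-function reductions so that the surviving integrals are written in a normal form making the balancedness manifest, and confirming that the boundary terms produced when the internal part of the diagonal shift is integrated by parts genuinely vanish. The cancellation itself is forced once that normal form is reached, so no new idea beyond Lemma~\ref{Sop} is required; indeed, as the paper already notes for Lemma~\ref{proofoflemma2}, the computation is entirely analogous to the $S$-case.
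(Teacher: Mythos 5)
Your proposal is correct and rests on the same observation as the paper's (very short) proof: every term of $K_T$ involves the edge variables only through balanced linear forms, with equal numbers of positive and negative unit coefficients, so the diagonal sum of boundary partial derivatives annihilates it term by term. The extra scaffolding you add — diagonal translation invariance of the full integrand, splitting off the internal derivatives, and integrating them by parts using the periodicity guaranteed by the trivial internal multipliers of Lemma \ref{proofoflemma2} — is a more explicit rendering of the same cancellation and is consistent with the paper's argument.
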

\begin{proof}
	In each term of the expression for $K_T$ there is an equal number of variables one half having positive coefficient and the other half having negative coefficient. Therefore the sum of the partial differentials must equal zero.
\end{proof}

\begin{appendices}
	\section{Faddeev's quantum dilogarithm}
	The quantum dilogarithm function $\Li_2(x;q)$, studied by Fadeev and Kashaev \cite{MR1264393} and other authors, is the function of two variables defined by the series 
	\begin{equation}
	\Li_2(x;q)=\sum_{n=1}^{\infty}\frac{x^n}{n(1-q^n)},
	\end{equation}
	where $x,q \in \C$, with $\abs{x},\abs{q}<1$.
	It is connected to the classical Euler dilogarithm $\Li_2$ given by $\Li_2(x)=\sum_{n=1}^{\infty}\frac{x^n}{n^2}$ in the sense that it is a $q$-deformation of the classical one in the following manner 
	\begin{equation}
	\lim_{\epsilon \to 0}\left(\epsilon \Li_{2}(x,e^{-\epsilon})\right)=\Li_2(x), \quad \abs{x}<1.
	\end{equation}
	Indeed using the expansion $\frac{1}{1-e^{-t}}=\frac{1}{t}+\frac{1}{2}+\frac{t}{12}-\frac{t^3}{720}+\dots$ we obtain a complete asymptotic expansion 
	\begin{equation}
	\Li_{2}(x,e^{-\epsilon})=\Li_2(x)\epsilon^{-1} + \frac{1}{2}\log\left(\frac{1}{1-x}\right)+\frac{x}{1-x}\frac{\epsilon}{12}-\frac{x+x^2}{(1-x)^3}\frac{\epsilon^3}{720}+\dots
	\end{equation}
	as $\epsilon \to 0$ with fixed $x\in \C$, $\abs{x}<1.$
	
	The second quantum dilogarithm $(x;q)_{\infty}$ defined for $\abs{q}<1$ and all $x \in \C$ is given as the function
	\begin{equation}
	(x;q)_{\infty}=\prod_{i=0}^\infty (1-xq^i).
	\end{equation}
	This second quantum dilogarithm is related to the first by the formula 
	\begin{equation}
	(x;q)_{\infty}=\exp(-\Li_2(x;q)).
	\end{equation}
	This is easily proven by a direct calculation
	\begin{equation}
	-\log{(x;q)_{\infty}}=\sum_{i=0}^{\infty}\log(1-xq^i)=\sum_{i=0}^{\infty}\sum_{n=1}^{\infty}\frac{1}{n}x^nq^{in}=\sum_{n=1}^\infty \frac{x^n}{n(1-q^n)}=\Li_2(x;q).
	\end{equation}
	
	\begin{prop}
		The function $(x;q)_{\infty}$ and its reciprocal have the Taylor expansions 
		\begin{equation}
		(x;q)_{\infty} = \sum_{n=0}^{\infty}\frac{(-1)^n}{(q)_n}q^{\frac{n(n-1)}{2}}x^n, \quad \frac{1}{(x;q)_{\infty}} \sum_{n=0}^{\infty} \frac{1}{(q)_n}x^n,
		\end{equation}
		around $x=0$, where $$ (q)_n=\frac{(q;q)_{\infty}}{(q^{n+1};q)_{\infty}}=(1-q)(1-q^2)\cdot(1-q^n).$$ 
	\end{prop}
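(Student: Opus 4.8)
The plan is to derive both expansions from the defining product via the elementary functional equation that shifts $x\mapsto xq$, and then to read off the Taylor coefficients from a first-order recursion. First I would record the identity obtained by splitting off the leading factor of the product,
\begin{equation*}
(x;q)_{\infty}=(1-x)(xq;q)_{\infty},
\end{equation*}
which holds for all $x\in\C$ because $\abs{q}<1$ makes the product converge to an entire function of $x$. Taking reciprocals gives the companion relation
\begin{equation*}
(1-x)\,\frac{1}{(x;q)_{\infty}}=\frac{1}{(xq;q)_{\infty}},
\end{equation*}
where the reciprocal is holomorphic on the disc $\abs{x}<1$, since the zeros of $(x;q)_{\infty}$ lie at $x=q^{-i}$, $i\ge 0$, all of modulus at least $1$.

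For the first identity I would write $(x;q)_{\infty}=\sum_{n\ge 0}a_n x^n$ (the Taylor series about $x=0$, which exists and converges everywhere since the function is entire), substitute into $(x;q)_{\infty}=(1-x)(xq;q)_{\infty}$ using $(xq;q)_{\infty}=\sum_n a_n q^n x^n$, and compare coefficients of $x^n$. This yields the recursion $a_n(1-q^n)=-q^{n-1}a_{n-1}$ together with $a_0=1$ from evaluation at $x=0$. Iterating telescopes both the sign and the powers of $q$, giving $a_n=(-1)^n q^{n(n-1)/2}/(q)_n$, exactly the claimed coefficient.

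For the reciprocal I would proceed symmetrically: set $1/(x;q)_{\infty}=\sum_{n\ge 0}b_n x^n$ on $\abs{x}<1$, insert it into $(1-x)\frac{1}{(x;q)_{\infty}}=\frac{1}{(xq;q)_{\infty}}$, and match coefficients. Here the left side contributes $b_n-b_{n-1}$ and the right side $b_n q^n$ in degree $n$, producing the simpler recursion $b_n(1-q^n)=b_{n-1}$ with $b_0=1$, whose solution is $b_n=1/(q)_n$.

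The computation is entirely routine; the only points deserving a word of care are analytic rather than algebraic, namely that $(x;q)_{\infty}$ is entire (so the first series has infinite radius of convergence and coefficient comparison is legitimate) and that its reciprocal is holomorphic in a disc about the origin (so the second series is meaningful there). Both follow immediately from $\abs{q}<1$ and the location of the zeros noted above, after which the coefficient matching and the solution of the two recursions complete the proof.
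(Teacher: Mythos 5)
Your proof is correct and follows essentially the same route as the paper, which itself only remarks that both expansions follow from the recursion $(x;q)_{\infty}=(1-x)(qx;q)_{\infty}$ together with the initial value $(0;q)_{\infty}=1$. Your write-up simply carries out the coefficient comparison and solves the resulting first-order recursions explicitly, with the (welcome) additional care about where each series converges.
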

	The proofs of these formulas follows easily from the recursion formula $(x;q)_{\infty}=(1-x)(qx;x)_{\infty}$, which together with the initial value $(0;q)_{\infty}=1$ determines the power series for $(x;q)_{\infty}$ uniquely.

	Yet another famous result for the function $(x;q)_{\infty}$, which can be proven by use of the Taylor expansion and the identity $\sum_{m-n=k}\frac{q^{mn}}{(q)_m(q)_n}=\frac{1}{(q)_{\infty}}$, is the Jacobi triple product formula 
	
	\begin{equation}
	(q;q)_{\infty}(x;q)_{\infty}(qx^{-1};q)_{\infty} = \sum_{k\in \Z} (-1)^k q^{\frac{k(k-1)}{2}}x^k,
	\end{equation} 
	which relates the quantum dilogarithm function to the classical Jacobi theta-function.
	
	The quantum dilogarithm functions introduced are related to yet another quantum dilogarithm function named after Faddeev. 
	\subsubsection*{Faddeev's quantum dilogarithm}
	\begin{defn}
		Faddeev's quantum dilogarithm function is the function in two complex arguments $z$ and $\bq$ defined by the formula
		\begin{equation}\label{qdl}
		\Phi_{\bq}(z)=\exp\pa{\int_{C}\frac{e^{-2izw}dw}{4\sinh(w\bq)\sinh(w/\bq)w}},
		\end{equation}
		where the contour $C$ runs along the real axis, deviating into the upper half plane in the vicinity of the origin.
	\end{defn}
	\begin{prop}
		Faddeev's quantum dilogarithm function $\Phi_{\bq}(z)$  is related to the function $(x;q)_{\infty}$, where $\abs{q}<1,$ in the following sense. When $\Im(\bq^2)>0$, the integral can be calculated explicitly 
		\begin{equation}\label{frac}
		\Phi_{\bq}(z)=\frac{\pa{e^{2\pi(z+c_{\bq}){\bq}};q^2}_\infty}{{\pa{e^{2\pi(z-c_{\bq}){\bq}};\tilde{q}^2}_\infty}}
		\end{equation} 
		where $q \equiv e^{i \pi {\bq}^2} \quad \text{and} \quad \tilde{q} \equiv e^{-\pi i {\bq}^{-2}}.$ 
	\end{prop}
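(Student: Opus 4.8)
The plan is to take logarithms of both sides and reduce the product formula to an identity for the defining contour integral. By the relation $(x;q)_\infty=\exp(-\Li_2(x;q))$ established above, together with $\Li_2(x;q)=\sum_{n\ge 1} x^n/(n(1-q^n))$, the logarithm of the right-hand side equals
\begin{equation*}
\log\pa{e^{2\pi(z+c_{\bq})\bq};q^2}_\infty-\log\pa{e^{2\pi(z-c_{\bq})\bq^{-1}};\tilde q^2}_\infty = -\Li_2\pa{e^{2\pi(z+c_{\bq})\bq};q^2}+\Li_2\pa{e^{2\pi(z-c_{\bq})\bq^{-1}};\tilde q^2}.
\end{equation*}
Thus it suffices to evaluate the integral $\int_C \frac{e^{-2izw}\,dw}{4\sinh(w\bq)\sinh(w/\bq)w}$ and match it, term by term, against these two dilogarithm series.

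First I would evaluate the integral by residues. With $\Im(\bq^2)>0$ and $\bq$ in the first quadrant, so that $\Re(\bq)>0$, I close $C$ with a large semicircle in the upper half-plane; the exponential growth of $\sinh(w\bq)\sinh(w/\bq)$ dominates the factor $e^{-2izw}$ along the arc (for $z$ in a suitable horizontal strip), so the arc contribution vanishes. The integrand has a triple pole at $w=0$, but since $C$ deviates above the origin this pole lies outside the closed contour. The enclosed singularities fall into two families of simple poles: the zeros $w=i\pi m\bq$, $m\ge 1$, of $\sinh(w/\bq)$, and the zeros $w=i\pi n\bq^{-1}$, $n\ge 1$, of $\sinh(w\bq)$, all of which lie strictly in the upper half-plane precisely because $\Re(\bq)>0$.

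Next I would compute the two residue sums. At $w=i\pi m\bq$ the residue is $\frac{e^{2\pi m z\bq}}{4(-1)^m i\pi m\,\sinh(i\pi m\bq^2)}$; using the elementary identity $1-q^{2m}=-2e^{\pi i m\bq^2}\sinh(i\pi m\bq^2)$ and the relation $c_{\bq}\bq=\tfrac{i}{2}(\bq^2+1)$ to absorb the shift by $c_{\bq}$, the sum $2\pi i\sum_{m}\Res$ collapses to $-\Li_2\pa{e^{2\pi(z+c_{\bq})\bq};q^2}$. Symmetrically, at $w=i\pi n\bq^{-1}$ one uses $1-\tilde q^{2n}=2e^{-\pi i n\bq^{-2}}\sinh(i\pi n\bq^{-2})$ and $c_{\bq}\bq^{-1}=\tfrac{i}{2}(1+\bq^{-2})$ to obtain $+\Li_2\pa{e^{2\pi(z-c_{\bq})\bq^{-1}};\tilde q^2}$. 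Adding the two contributions reproduces exactly the logarithm of the right-hand side computed above, and exponentiating gives the claimed factorisation; this computation also indicates that the exponent in the denominator should read $e^{2\pi(z-c_{\bq})\bq^{-1}}$ rather than $e^{2\pi(z-c_{\bq})\bq}$.

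The main obstacle is the rigorous justification of the contour closing. One must verify that the integrand decays fast enough on the large upper semicircle — which forces a restriction of $z$ to an appropriate strip where the modulus of $e^{-2izw}$ is controlled by the growth of the hyperbolic sines — and then observe that both sides of the asserted identity are holomorphic in $z$ on a common open region, so that the equality propagates by analytic continuation to the whole domain $\Im(\bq^2)>0$. The remaining care goes into confirming uniform convergence of the two pole series and ensuring that the order-three pole at the origin is never enclosed, so that it contributes nothing to the residue sum.
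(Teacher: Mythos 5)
Your proof is correct and follows essentially the same route as the paper's: evaluate the defining contour integral by the residue theorem over the two families of simple poles $w=i\pi m\bq$ and $w=i\pi n\bq^{-1}$ in the upper half-plane, and match the two resulting series with $-\Li_2(\,\cdot\,;q^2)$ and $+\Li_2(\,\cdot\,;\tilde q^2)$ via $(x;q)_\infty=\exp(-\Li_2(x;q))$. Your remark that the denominator exponent should read $e^{2\pi(z-c_{\bq})\bq^{-1}}$ is also well taken --- the paper's own intermediate line carries $e^{2\pi(z-c_{\bq})\bq^{-1}n}$ before the $\bq^{-1}$ is dropped in the final displayed formula and in the statement.
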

	\begin{proof}
		We consider the integrand of the integral $I(z,\bq)=\frac{1}{4}\int_C \frac{e^{-2izw}}{\sinh(w\bq)\sinh(w/\bq)w}dw$. The integrand has poles at $w=\pi i n \bq$ and $w=\pi i n \bq^{-1}.$
		The residue at $c$ of a fraction i.e. $f(x)=\frac{g(x)}{h(x)}$ can be calculated as $\Res f(c)=\frac{g(c)}{h'(c)}$ when $c$ is a simple pole. Therefore we get by the residue theorem 
		\begin{align*}
		I(z,\bq)&=\frac{\pi i}{2} \sum_{n=1}^{\infty} \frac{e^{2\pi z \bq n}}{\pi i n \bq (-1)^n \sinh(\pi i n \bq^2)}+\frac{e^{2\pi z \bq^{-1} n}}{\pi i n \bq (-1)^n \sinh(\pi i n \bq^{-2})}\\
		&=\sum_{n=1}^{\infty} \frac{e^{\pi i n}e^{2\pi z \bq n}}{n(e^{\pi i n \bq^2}-e^{-\pi i n \bq^2})} + \frac{e^{\pi i n}e^{2\pi z \bq^{-1}n}}{n(e^{\pi i n \bq^{-2}}-e^{-\pi i n \bq^{-2}})} \\
		&=\sum_{n=1}^{\infty} -\frac{\left(e^{2\pi z \bq + \pi i +\pi i \bq^2}\right)^n}{n(1-e^{2\pi i \bq^2 n})} + \frac{\left(e^{2\pi z \bq^{-1} - \pi i -\pi i \bq^{-2}}\right)^n}{n(1-e^{-2\pi i \bq^{-2} n})}\\
		&=\sum_{n=1}^{\infty} -\frac{e^{2\pi(z+c_{\bq})\bq n}}{n(1-e^{2\pi i \bq^2 n})} + \frac{e^{2\pi(z-c_{\bq})\bq^{-1}n}}{n(1-e^{-2\pi i \bq^{-2} n})}\\
		&=\log\pa{e^{2\pi(z+c_{\bq})\bq};q^2}_{\infty}-\log\pa{e^{2\pi(z-c_{\bq})\bq};\tilde{q}^2}_{\infty}.
		\end{align*}
	\end{proof}
	\subsubsection*{Functional equations}
	\begin{prop}\label{funcfad}
		Faddeev's quantum dilogarithm function satisfies the two functional equations
		\begin{equation}\label{feq}
		\frac{1}{\Phi_{\bq}(z+i{\bq}/2)}=\frac{1}{\Phi_{\bq}(z-i{\bq}/2)}\pa{1+e^{2\pi {\bq}z}},
		\end{equation}
		\begin{equation}\label{inv}
		\Phi_{\bq}(z)\Phi_{\bq}(-z)=\zeta_{inv}^{-1}e^{i\pi z^2},
		\end{equation}
		where $\zeta_{inv} = e^{i \pi (1+2c_{\bq}^2)/6}$.
	\end{prop}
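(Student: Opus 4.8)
The plan is to derive both identities directly from the defining contour integral \eqref{qdl}, passing to logarithms so that ratios and products of $\Phi_{\bq}$ become differences and sums of integrals sharing the common kernel. Throughout I would first argue in a region where \eqref{qdl} converges (say $\bq>0$ and $z$ real, or in a horizontal strip), and extend the resulting identities to all admissible $(z,\bq)$ at the end by analytic continuation, both sides being meromorphic.

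For \eqref{feq} I would write
\[
\log\Phi_{\bq}(z-i\bq/2)-\log\Phi_{\bq}(z+i\bq/2)
=\int_{C}\frac{e^{-2i(z-i\bq/2)w}-e^{-2i(z+i\bq/2)w}}{4\sinh(w\bq)\sinh(w/\bq)\,w}\,dw .
\]
Since $e^{-2i(z\mp i\bq/2)w}=e^{-2izw}e^{\mp \bq w}$, the numerator equals $-2e^{-2izw}\sinh(\bq w)$, the factor $\sinh(w\bq)$ cancels, and one is left with $-\tfrac12\int_{C}\frac{e^{-2izw}}{w\sinh(w/\bq)}\,dw$. I would evaluate this by differentiating under the integral sign in $z$, which kills the $1/w$ and gives $i\int_{C}\frac{e^{-2izw}}{\sinh(w/\bq)}\,dw$; closing in the lower half-plane collects the simple poles of $1/\sinh(w/\bq)$ at $w=-i\pi m\bq$ ($m\ge1$) \emph{together with} the pole at the origin, which lies below the detoured contour, and summing the geometric series reproduces $\frac{d}{dz}\log\pa{1+e^{2\pi\bq z}}$; matching the constant (e.g.\ as $\re z\to-\infty$, where both sides vanish) yields \eqref{feq}. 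A shorter route, given that the product formula \eqref{frac} is already available, is to note that the shift $z\mapsto z\pm i\bq/2$ multiplies $e^{2\pi(z+c_{\bq})\bq}$ by $q^{\pm1}$ while sending $e^{2\pi(z-c_{\bq})\bq^{-1}}$ to its negative in both cases, so the two $\tilde q$-factors coincide and cancel in the ratio; the recursion $(x;q^2)_\infty=(1-x)(xq^2;q^2)_\infty$ then collapses the remaining $q$-factors to $1-q^{-1}e^{2\pi(z+c_{\bq})\bq}=1+e^{2\pi\bq z}$, using $2\pi c_{\bq}\bq=\pi i(\bq^2+1)$.

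For \eqref{inv} I would add the logarithms:
\[
\log\Phi_{\bq}(z)+\log\Phi_{\bq}(-z)=\int_{C}\frac{\cos(2zw)}{2\,w\sinh(w\bq)\sinh(w/\bq)}\,dw .
\]
The kernel $g(w)$ is odd, so reflecting $w\mapsto-w$ identifies the integral over the lower-detour contour with $-\int_{C}g$; subtracting the two contours leaves a clockwise loop about the origin, giving $\int_{C}g=-\pi i\,\Res_{w=0}g(w)$. Expanding $\sinh(w\bq)\sinh(w/\bq)=w^2\pa{1+\tfrac{w^2}{6}(\bq^2+\bq^{-2})+\cdots}$ and $\cos(2zw)=1-2z^2w^2+\cdots$, the coefficient of $1/w$ in $g$ is $-z^2-\tfrac{1}{12}(\bq^2+\bq^{-2})$, so $\int_{C}g=i\pi z^2+\tfrac{i\pi}{12}(\bq^2+\bq^{-2})$. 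Finally, from $c_{\bq}=\tfrac{i}{2}(\bq+\bq^{-1})$ one checks $1+2c_{\bq}^2=-\tfrac12(\bq^2+\bq^{-2})$, hence $\zeta_{inv}^{-1}=e^{i\pi(\bq^2+\bq^{-2})/12}$, which matches the computed integral and proves \eqref{inv}.

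The residue sums and the Laurent expansion are routine; the genuine care lies in the contour bookkeeping around the detour at the origin. This is the main obstacle: the double (resp.\ triple) pole there must be tracked exactly — dropping the origin's contribution in \eqref{feq} spoils the constant by $2\pi\bq$, while in \eqref{inv} that same residue is the \emph{sole} source of both the $z^2$ term and the prefactor $\zeta_{inv}$. Keeping the detour on the correct side of each pole and verifying convergence of the closing arcs before invoking analytic continuation are therefore the steps that require the most attention.
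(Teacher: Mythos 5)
Your proposal is correct and follows essentially the same route as the paper: both identities are obtained by residue calculus on the defining contour integral \eqref{qdl}, with \eqref{feq} reducing to $-\tfrac12\int_C e^{-2izw}/(w\sinh(w/\bq))\,dw$ after the $\sinh(w\bq)$ cancellation and \eqref{inv} reducing, via oddness of the integrand, to the residue of the triple pole at the origin (the paper collects a half-residue on the semicircular detour where you close up a full clockwise loop, and it closes the contour directly rather than differentiating in $z$ first, but the bookkeeping — including the origin's extra contribution when closing downward — is identical). Your alternative derivation of \eqref{feq} from the product formula \eqref{frac} via the recursion $(x;q^2)_\infty=(1-x)(xq^2;q^2)_\infty$ is a valid shortcut not taken in the paper, though it requires continuing from $\Im\bq^2>0$ back to the general case, as you note.
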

	\begin{proof} Let us first prove \eqref{feq}. We have 
		\begin{align*}
		\frac{\Phi_{\bq}(z-i{\bq}/2)}{\Phi_{\bq}(z+i{\bq}/2)}&=\exp{\int_{C}\frac{e^{-2i(z-i{\bq}/2)w}-e^{-2i(z+i{\bq}/2)w}}{4\sinh(w{\bq})\sinh(w/{\bq})w}}\;dw \\
		&= \exp{\int_{C}\frac{e^{-2izw}\pa{e^{-{\bq}w}-e^{{\bq}w}}}{4\sinh(w{\bq})\sinh(w/{\bq})w}}\;dw \\
		&= \exp{\pa{-\frac{1}{2}\int_{C}\frac{e^{-2izw}}{\sinh(w/{\bq})w}}\;dw}.
		\end{align*}
		Let $a>0$. Let $\varepsilon=1$ if $\Im(-2iz)\geq 0$ and $\varepsilon=-1$ otherwise.
		Put $\delta_a^-=[-a,i\varepsilon a]$ and $\delta_a^-=[i\varepsilon a,a]$. The integrals $\int_{\delta_{a^{\pm}}}\frac{e^{-2izw}}{2\sinh(w/{\bq})w}\;dw$ converge to zero as $a\to \infty$. Therefore 
		\begin{equation*}
		\int_{C}\frac{e^{-2izw}}{\sinh(w/{\bq})w}\;dw = \epsilon 2\pi i \pa{c_{\epsilon} + \sum_{n=1}^{\infty}\Res_{w=\epsilon i \pi {\bq} n} \left\{\frac{e^{-2izw}}{\sinh(w/{\bq})w}\right\}},
		\end{equation*}
		where $c_1=0$ and $c_{-1}=\Res_{w=0}\tpa{\frac{e^{2 i z w}}{\sinh(w/{\bq})w}}=-2iz{\bq}$. For $n\in \Z \backslash \tpa{0}$ we have
		\begin{equation*}
		\Res_{w=\pi i n {\bq} \epsilon}\tpa{\frac{e^{-2izw}}{\sinh(w/{\bq})w}}=\frac{(-1)^n e^{2z\pi {\bq}\epsilon n}}{\pi i n}
		\end{equation*}
		so
		\begin{equation*}
		\int_{C}\frac{e^{-2izw}}{\sinh(w/{\bq})w}\;dw = (\epsilon-1)2\pi z {\bq} -2\log(1+e^{2z\pi {\bq} \epsilon}),
		\end{equation*}
		giving the first result.
		
		To prove equation \eqref{inv} let us choose the path $C=(-\infty,-\epsilon]\cup \epsilon \exp([\pi i,0])\cup [\epsilon,\infty)$ and let $\epsilon \to 0$. The rest is just calculations
		\begin{align*}
		\log \Phi_{\bq}(z)\Phi_{\bq}(-z)=\frac{1}{2}\int_{C}\frac{\cos(2wz)}{\sinh(w\bq)\sinh(w/\bq)w}dw
		\end{align*}
		Note that 
		\begin{equation*}
		\frac{1}{2}\int_{(-\infty,-\epsilon]}\frac{\cos(2wz)}{\sinh(w\bq)\sinh(w/\bq)w}dw=-\frac{1}{2}\int_{[\epsilon,\infty)}\frac{\cos(2wz)}{\sinh(w\bq)\sinh(w/\bq)w}dw.
		\end{equation*}
		i.e. it is enough to collect the half residue around $w=0$ of the remaining intergral
		\begin{align*}
		\frac{1}{2}\int_{\epsilon([\pi i,0])}\frac{\cos(2wz)}{\sinh(w\bq)\sinh(w/\bq)w} dw &= \frac{\pi i}{2}\Res_{w=0}\frac{\cos(2wz)}{\sinh(w\bq)\sinh(w/\bq)w}\\
		&= \frac{\pi i}{2}\left( \frac{b^2+b^{-2}}{6}+2z^2 \right)\\
		&=e^{-\pi i(1+2c_{\bq}^2)/6}e^{\pi i z^2}.
		\end{align*}
	\end{proof}
	
	\subsubsection*{Zeros and poles}
	The functional equation \eqref{feq} shows that $\Phi_{\bq}(z),$ which in its initial domain of definition has no zeroes and poles, extends (for fixed $\bq$ with $\Im  \bq^2 > 0$) to a meromorphic function in the variable $z$ to the entire complex plane with essential singularity at infinity and with characteristic properties:
	\begin{equation}
	\left(\Phi_{\bq}(z)\right)^{\pm 1}=0 \iff z=\mp (c_{\bq} + m i \bq + n i \bq). 
	\end{equation} 
	The behaviour at infinity depends on the direction along which the limit is taken 
	\begin{align}
	\Phi_{\bq}(z)\big\vert_{\abs{z}\to \infty} \approx
	\begin{cases}		
	1 & \abs{\arg{z}}>\frac{\pi}{2}+\arg{\bq},\\
	\zeta_{inv}^{-1}e^{\pi i z^2} & \abs{\arg{z}}<\frac{\pi}{2}-\arg{\bq} \\
	\frac{(\tilde{q}^2,\tilde{q}^2)_{\infty}}{\Theta(i \bq^{-1}z;-\bq^{-2})} & \abs{\arg{z-\frac{\pi}{2}}}<\arg{\bq} \\
	\frac{\Theta(i \bq z;\bq^{2})}{(q^2;q^2)_{\infty}} & \abs{\arg{z+\frac{\pi}{2}}}<\arg{\bq}
	\end{cases}
	\end{align}
	where 
	\begin{align}
	\Theta(z;\tau)\equiv \sum_{n\in \Z} e^{\pi i \tau n^2 + 2\pi i zn}, \quad \Im \:\tau > 0.
	\end{align}
	\subsubsection*{Unitarity} When $\bq$ is real or on the unit circle 
	\begin{align}\label{eq:unit}
	(1-\abs{\bq}) \Im \bq = 0\quad  \Rightarrow \quad  \overline{\Phi_{\bq}(z)} = \frac{1}{\Phi_{\bq}(\overline{z})}.
	\end{align}
	\subsubsection*{Quantum Pentagon Identity}
	In terms of specifically normalised sefladjoint Heisenberg momentum and position operators acting as unbounded operators on $L^2(\R)$ by the formulae
	\begin{align*}
	\pos f(x) = xf(x), \quad \mom f(x) = \frac{1}{2\pi i}f(x),
	\end{align*}
	the following pentagon identity for unitary operators is satisfied \cite{MR1264393}
	\begin{equation}\label{eq:fiveterm}
	\Phi_{\bq}(\mom)\Phi_{\bq}(\pos)=\Phi_{\bq}(\pos)\Phi_{\bq}(\mom+\pos)\Phi_{\bq}(\mom).
	\end{equation} 
	
	\subsubsection*{Fourier transformation formulae for Faddeev's quantum dilogarithm}
	The quantum pentagon identity \eqref{eq:fiveterm} is equivalent to the integral identity
	\begin{align}
	\int_{\R+i\varepsilon}\frac{\Phi_{\bq}(x+u)}{\Phi_{\bq}(x-c_{\bq})}e^{-2\pi i wx}\: dx = \frac{\Phi_{\bq}(u)\Phi_{\bq}(c_{\bq}-w)}{\Phi_{\bq}(u-w)}e^{\frac{\pi i }{	12}(1-4c_{\bq}^2)},
	\end{align}
	where $\Im \bq^2 > 0.$
	From here we get the Fourier transformation formula for the quantum dilogarithm formally sending $u\to -\infty$ by the use of \eqref{frac} and \eqref{eq:unit}
	\begin{equation}\label{eq:Fourier}
	\int_{\R+i\varepsilon} {\Phi_{\bq}(x+c_{\bq})} {e^{2\pi i wx}} = \frac{1}{\Phi_{\bq}(-w-c_{\bq})}e^{-\frac{\pi i }{12}(1-4c_{\bq}^2)}.
	\end{equation}
	
	\subsubsection*{Quasi-classical limit of Faddeev's quantum dilogarithm}
	\begin{prop}
		For fixed $x$ and $\bq \to 0$ we have the following asymptotic expansion
		\begin{equation}\label{expan}
		\log\Phi_{\bq}\pa{\frac{x}{2\pi \bq}} = \sum_{n=0}^{\infty}(2\pi i \bq)^{2n-1}\frac{B_{2n}(1/2)}{(2n)!}\frac{\partial^{2n}\Li_2(-e^x)}{\partial x^{2n}},
		\end{equation}
		where $B_{2n}(1/2)$ are the Bernoulli polynomials $B_{2n}$ evaluated at $1/2$.
	\end{prop}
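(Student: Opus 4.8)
The plan is to work directly from the contour-integral representation \eqref{qdl}, which upon setting $z=\frac{x}{2\pi\bq}$ reads
\[
\log\Phi_{\bq}\pa{\frac{x}{2\pi\bq}}=\int_{C}\frac{e^{-ixw/(\pi\bq)}\,dw}{4\sinh(w\bq)\sinh(w/\bq)\,w}.
\]
The first step is a change of variables that isolates the small parameter into a single hyperbolic factor: substituting $w=\bq v$ and then $v=\pi s$ normalises the exponent to $e^{-ixs}$ and gives
\[
\log\Phi_{\bq}\pa{\frac{x}{2\pi\bq}}=\int_{C}\frac{e^{-ixs}\,ds}{4\sinh(\pi\bq^2 s)\sinh(\pi s)\,s}.
\]
After this reduction the only $\bq$-dependence sits in $\sinh(\pi\bq^2 s)$, whose argument is small as $\bq\to0$, while the remaining kernel $\frac{e^{-ixs}}{2\sinh(\pi s)\,s}$ is independent of $\bq$.

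The second step is to expand the small factor with the generating function for the Bernoulli polynomials at $1/2$, namely $\frac{t}{2\sinh(t/2)}=\sum_{n\ge0}B_{2n}(1/2)\frac{t^{2n}}{(2n)!}$ (the odd polynomials vanish since $B_{2n+1}(1/2)=0$). Taking $t=2\pi\bq^2 s$ yields
\[
\frac{1}{2\sinh(\pi\bq^2 s)}=\sum_{n\ge0}B_{2n}(1/2)\frac{(2\pi\bq^2 s)^{2n-1}}{(2n)!}.
\]
Inserting this and integrating term by term produces, for each $n$, the prefactor $\frac{B_{2n}(1/2)}{(2n)!}(2\pi\bq^2)^{2n-1}$ times $\int_{C}\frac{e^{-ixs}s^{2n-2}}{2\sinh(\pi s)}\,ds$; pulling the factor $(-1)^{n-1}i=i^{2n-1}$ out of that integral (see below) is exactly what recombines $(2\pi\bq^2)^{2n-1}$ with $i^{2n-1}$ into the coefficient $(2\pi i\bq^2)^{2n-1}$ of \eqref{expan}.

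The third step identifies the remaining integrals with derivatives of the dilogarithm. Since $s^{2k}e^{-ixs}=(-1)^k\frac{\partial^{2k}}{\partial x^{2k}}e^{-ixs}$, each integral equals an $x$-derivative of the base integral $I_0(x):=\int_{C}\frac{e^{-ixs}}{2\sinh(\pi s)}\,ds$. Closing $C$ (which passes above the origin) in the lower half-plane and summing the residues of $\frac{1}{2\sinh(\pi s)}$ at $s=0$ and $s=-ik$, with residue $\frac{(-1)^k}{2\pi}$, gives a geometric series and the closed form $I_0(x)=-\frac{i}{1+e^{-x}}=i\,\frac{\partial^2}{\partial x^2}\Li_2(-e^x)$, where one uses $\frac{\partial^2}{\partial x^2}\Li_2(-e^x)=-\frac{1}{1+e^{-x}}$. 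Hence $\int_{C}\frac{e^{-ixs}s^{2n-2}}{2\sinh(\pi s)}\,ds=(-1)^{n-1}i\,\frac{\partial^{2n}}{\partial x^{2n}}\Li_2(-e^x)$, and assembling the terms reproduces the claimed expansion.

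The main obstacle is making the second step rigorous: the resulting series is an \emph{asymptotic} (indeed divergent) expansion, so interchanging $\sum_n$ with $\int_{C}$ is not literally justified. I would instead expand $\frac{1}{2\sinh(\pi\bq^2 s)}$ with an explicit Taylor remainder after $N$ terms, integrate the finite sum exactly as above, and bound the remainder integral along $C$ uniformly in $\bq$ to show it contributes only at higher order, thereby upgrading the formal series to a genuine asymptotic expansion via a Watson's-lemma type estimate that controls the exponentially small poles of the remainder near the imaginary axis. A secondary point to verify is the admissibility of closing the contour and the convergence of the residue series, which hold for $x$ in a suitable range and then extend to all fixed $x$ by analyticity.
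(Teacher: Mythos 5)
Your argument is correct and reaches the right expansion, but it is a genuinely different proof from the one in the paper. The paper never returns to the contour integral \eqref{qdl}: it instead takes the shift functional equation \eqref{feq}, rewrites it as $-2\sinh(i\pi \bq^2\,\partial/\partial x)\log\Phi_{\bq}\pa{\tfrac{x}{2\pi\bq}}=-\tfrac{\partial}{\partial x}\Li_2(-e^x)$ using $f(x+y)=e^{y\partial_x}f(x)$, and then formally inverts the difference operator with the same generating function $\tfrac{z}{\sinh z}=\sum_n B_{2n}(1/2)\tfrac{(2z)^{2n}}{(2n)!}$ that you use. That route is shorter but purely formal (the inversion is only determined modulo the kernel of $\sinh(i\pi\bq^2\partial_x)$, and termwise inversion of a divergent operator series is not justified there either); your route, by expanding $1/\sinh(\pi\bq^2 s)$ under the integral and evaluating residues, localizes the error in a concrete Taylor remainder and is the natural path to an actually rigorous asymptotic statement, which is what your closing remarks about a Watson-type estimate address. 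Two small points to tighten. First, your step ``each integral equals an $x$-derivative of $I_0$'' only covers $n\ge 1$, since for $n=0$ the power is $s^{-2}$ and one cannot write $s^{-2}e^{-ixs}$ as a derivative of $e^{-ixs}$; the leading term $\int_C\frac{e^{-ixs}}{2\sinh(\pi s)s^2}\,ds=-i\,\Li_2(-e^x)$ must be computed separately (a direct residue computation gives $i\bigl(\tfrac{x^2}{2}+\tfrac{\pi^2}{6}+\Li_2(-e^{-x})\bigr)$, which equals $-i\,\Li_2(-e^x)$ by the dilogarithm inversion formula), and this is precisely the term that drives the volume asymptotics, so it should not be left implicit. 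Second, your computation lands on the coefficient $(2\pi i\bq^2)^{2n-1}$ rather than the $(2\pi i\bq)^{2n-1}$ printed in \eqref{expan}; your version is the one consistent with the paper's own Corollary \ref{eq:asym} (whose leading term is $\tfrac{1}{2\pi i\bq^2}\Li_2(-e^x)$) and with the paper's proof, so the printed exponent of $\bq$ in the proposition is a typo that you have, correctly, silently fixed.
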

	\begin{proof}
		From \eqref{feq} we have that
		\begin{align*}
		\log\pa{\frac{\Phi_{\bq}\pa{\frac{x-i\pi\bq^2}{2\pi \bq}}}{\Phi_{\bq}\pa{\frac{x+i\pi\bq^2}{2\pi \bq}}}}=\log(1+e^x).
		\end{align*}
		The left hand side yields 
		\begin{equation*}
		\log\Phi_{\bq}\pa{\frac{x-i\pi \bq^2}{2\pi \bq}}-\log\Phi_{\bq}\pa{\frac{x+i\pi \bq^2}{2\pi \bq}} =-2\sinh(i\pi b^2 \partial/\partial x) \log\Phi_{\bq}\pa{\frac{x}{2\pi \bq}},
		\end{equation*}
		where we have used the fact that 
		\begin{equation*}
		f(x+y)=e^{y\frac{\partial}{\partial x}}\pa{f}(x),
		\end{equation*}
		which is just the Taylor expansion of $f$ around $x$. 
		While the right hand side can be written in the following manner
		\begin{equation*}
		\log(1+e^x)=\frac{\partial}{\partial x}\int_{-\infty}^x \log(1+e^z)\;dz = - \frac{\partial}{\partial x} \Li_{2}(-e^x).
		\end{equation*}
		Using the expansion 
		\begin{equation*}
		\frac{z}{\sinh(z)}=\sum_{n=0}^{\infty}B_{2n}(1/2)\frac{(2z)^{2n}}{(2n)!} 
		\end{equation*}
		gives exactly \eqref{expan}.
	\end{proof}
	\begin{cor}\label{eq:asym}
		For fixed $x$ and $\bq \to 0$ one has 
		\begin{equation}
		\Phi_{\bq}\pa{\frac{x}{2\pi \bq}}=\exp\pa{\frac{1}{2\pi i b^2} \Li_{2} (-e^x)}\pa{1+O(b^2)}.
		\end{equation}
	\end{cor}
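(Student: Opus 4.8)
The plan is to obtain the corollary by extracting the leading term of the complete asymptotic expansion \eqref{expan} established in the preceding Proposition and then exponentiating. First I would isolate the $n=0$ summand of that expansion. Because $B_0\equiv 1$, so that $B_0(1/2)=1$, and because the zeroth $x$-derivative of $\Li_2(-e^x)$ is $\Li_2(-e^x)$ itself, the $n=0$ term equals $\frac{1}{2\pi i b^2}\Li_2(-e^x)$; this is the leading-order behaviour of $\log\Phi_{\bq}\pa{\frac{x}{2\pi \bq}}$ as $\bq\to 0$. (Equivalently, it is what one gets by keeping only the linear term of $\sinh(i\pi b^2\,\partial/\partial x)$ in the finite-difference relation derived from \eqref{feq} in the proof of the Proposition, which is why the leading coefficient carries $b^2$ rather than $b$.)

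Next I would control the tail. Every summand with $n\ge 1$ carries the factor $(2\pi i b^2)^{2n-1}$, whose power of $b^2$ is $2n-1\ge 1$, while the accompanying coefficient $\frac{B_{2n}(1/2)}{(2n)!}\frac{\partial^{2n}\Li_2(-e^x)}{\partial x^{2n}}$ is, for fixed real $x$, finite (all $x$-derivatives of $\Li_2(-e^x)$ are smooth and bounded on compact $x$-sets). Hence
\begin{equation*}
\log\Phi_{\bq}\pa{\frac{x}{2\pi \bq}} = \frac{1}{2\pi i b^2}\Li_2(-e^x) + O(b^2),\qquad \bq\to 0.
\end{equation*}
Exponentiating and using $\exp\pa{O(b^2)}=1+O(b^2)$ then gives
\begin{equation*}
\Phi_{\bq}\pa{\frac{x}{2\pi \bq}}=\exp\pa{\frac{1}{2\pi i b^2}\Li_2(-e^x)}\pa{1+O(b^2)},
\end{equation*}
which is the assertion.

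The hard part is not the algebra but the analytic bookkeeping in the truncation: one must read \eqref{expan} as a genuine asymptotic expansion and argue that discarding the terms with $n\ge 1$ leaves an additive error that is $O(b^2)$, uniformly on compact $x$-sets, so that after exponentiation it becomes the multiplicative factor $1+O(b^2)$. This is precisely what the Proposition supplies, since its coefficients are explicit smooth functions of $x$ and the first omitted term dominates the remainder; concretely, the error in $\log\Phi_{\bq}$ is governed by the $n=1$ term $(2\pi i b^2)\frac{B_2(1/2)}{2}\frac{\partial^2\Li_2(-e^x)}{\partial x^2}=O(b^2)$, and the exponential of an $O(b^2)$ quantity is $1+O(b^2)$.
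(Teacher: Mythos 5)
Your proposal is correct and is exactly the argument the paper intends: the corollary is the immediate $n=0$ truncation of the expansion \eqref{expan} (whose $n=0$ coefficient is $\frac{1}{2\pi i b^2}$, confirming that the factor in the proposition should be read as $(2\pi i b^2)^{2n-1}$), with the first omitted term supplying the additive $O(b^2)$ error that exponentiates to the multiplicative $1+O(b^2)$. The paper offers no separate proof of the corollary, so your derivation matches its (implicit) route.
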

	\section{The Tetrahedral Operator}\label{TOP}
	In order to prove Proposition \ref{prop:Kashaev} we make use of the following formulae
	\begin{lemma}\label{Lemmatop}
		Suppose $x$ and $y$ are operators in an algebra such that $$z=[x,y], [x,z]=0.$$ Then
		\begin{align*}
		f(x)y &=yf(x)+zf'(x) \\
		e^x f(x) &= f(y+z)e^x,
		\end{align*}
		for every power series such that $f(x),f'(x)$ and $f(y+z)$ can be defined in the same operator algebra.
	\end{lemma}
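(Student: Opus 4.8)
The plan is to treat the two identities separately, proving each first on monomials and then extending to an arbitrary admissible power series by linearity. Throughout, the single structural fact I would exploit is that the hypothesis $[x,z]=0$ lets the ``correction term'' $z=[x,y]$ be moved freely past any power of $x$.

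First I would establish $f(x)y = yf(x) + zf'(x)$ by induction on the degree for $f(x)=x^n$. The case $n=1$ is just the definition $xy=yx+z$, and $n=0$ is trivial. For the inductive step I would write $x^{n+1}y = x\,(x^n y)$, substitute the induction hypothesis $x^n y = yx^n + n\,z\,x^{n-1}$, and then use $[x,z]=0$ to commute $x$ past each factor of $z$; collecting terms gives $x^{n+1}y = yx^{n+1} + (n+1)\,z\,x^{n}$, which is precisely the claimed relation for $f(x)=x^{n+1}$ since $\tfrac{d}{dx}x^{n+1}=(n+1)x^n$. Summing these identities against the Taylor coefficients of $f$ yields the general case, valid whenever $f(x)$ and $f'(x)$ are both defined in the algebra.

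Second, for the identity involving $e^x$ I would read the left-hand side as the conjugation of a function of $y$ (this is the form needed for Proposition \ref{prop:Kashaev}) and use the Hadamard expansion $\Ad_{e^x}(y)=e^x y e^{-x}=\sum_{k\ge 0}\tfrac{1}{k!}(\mathrm{ad}_x)^k(y)$. Here $\mathrm{ad}_x(y)=[x,y]=z$ and $(\mathrm{ad}_x)^2(y)=[x,z]=0$, so the series terminates after two terms and gives $\Ad_{e^x}(y)=y+z$. Since conjugation by the invertible element $e^x$ is an algebra automorphism, it commutes with the formation of any power series, whence $\Ad_{e^x}\big(f(y)\big)=f\big(\Ad_{e^x}(y)\big)=f(y+z)$. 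Rearranging $e^x f(y) e^{-x}=f(y+z)$ then produces the stated formula.

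The hard part will be analytic rather than algebraic: in the application $x$ and $y$ are the unbounded selfadjoint position and momentum operators on $L^2(\R)$, so one must fix a common dense invariant domain — the Schwartz space $\calS(\R)$ is the natural choice — on which the relevant series converge and the termwise commutator manipulations are legitimate. At the purely formal level both identities are short; the real care lies in specifying the operator algebra in which $f(x)$, $f'(x)$, and $f(y+z)$ are simultaneously meaningful, exactly as the hypothesis of the lemma already signals.
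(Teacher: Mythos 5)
Your proof is correct. For the first identity your induction on monomials is essentially the paper's own argument (the paper writes the same content as the telescoping identity $[x^j,y]=\sum_{k=0}^{j-1}x^k[x,y]x^{j-k-1}=jzx^{j-1}$ and then sums against the coefficients of $f$), so there is nothing to compare there. For the second identity you take a genuinely different, though closely related, route: you compute $e^xye^{-x}=y+z$ from the terminating Hadamard series $\sum_k\frac{1}{k!}(\mathrm{ad}_x)^k(y)$ and then use that conjugation by the invertible element $e^x$ is an algebra automorphism, hence commutes with forming any power series in $y$. The paper instead obtains the intertwining relation $e^xy=(y+z)e^x$ as an immediate corollary of the first identity applied to $f(x)=e^x$ (whose derivative is itself), and then extends to $y^l$ by a one-line induction, so it never needs $e^{-x}$ or invertibility; your version buys a more transparent structural explanation at the cost of that extra hypothesis, which is harmless here since in the application $e^{2\pi i\mom_1\pos_2}$ is unitary. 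Note also that you silently, and correctly, read the second identity as $e^xf(y)=f(y+z)e^x$ rather than the literal $e^xf(x)$ of the statement; this matches what the paper actually proves and what is used in the proof of Proposition \ref{prop:Kashaev}. Your closing remarks about fixing a common invariant domain such as $\calS(\R)$ address a point the paper leaves implicit.
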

	\begin{proof}
		Let $f(x)=\sum_{j=0}^{\infty} a_jx^j.$ Then,
		\begin{align*}
		[f(x),y]=\sum_{j=0}^{\infty} a_j [x^j,y]=\sum_{j=0}^{\infty} a_j \sum_{k=0}^{j-1}x^k[x,y]x^{j-k-1}=\sum_{j=0}^{\infty} a_j j zx^{j-1} = zf'(x).
		\end{align*}
		which shows the first equation. The second equation follows from this when we set $f(x)=e^xy^{l-1}$
		\begin{align*}
		e^{x}y^l=ye^x y^{l-1} = (y-z)e^{x}y^{l-l}=\dots = (y+z)^l e^x,
		\end{align*}
		and from here we get that
		\begin{equation*}
		e^xf(y)=e^x \sum_{j=0}^{\infty} a_j y^j = \sum_{j=0}^{\infty}a_j (y+z)^je^x = f(y+z)e^x.
		\end{equation*}
	\end{proof}
	\begin{proof}[Proof of Proposition \ref{prop:Kashaev}]
		The equations in \eqref{tetraop} follows from the system of equations 
		\begin{align*}
		&\pto \pos_1 = (\pos_1+\pos_2)\pto, \\
		&\pto (\mom_1+\mom_2) =(\mom_1+\pos_2)\pto, \\
		&\pto (\mom_1+\pos_2) = (\mom_1+\pos_2)\pto, \\
		&\pto e^{2\pi \bq \mom_1} = (e^{2\pi \bq\mom_1}+e^{2\pi \bq(\pos_1+\mom_2)})\pto,
		\end{align*}
		where $\pto=e^{2\pi i \mom_1\pos_2}\psi(\pos_1-\pos_2+\mom_2).$ We prove them one by one below using Lemma \ref{Lemmatop}.
		\begin{align*}
		\pto \pos_1 &= e^{2\pi i \mom_1\pos_2}\psi(\pos_1+\mom_2-\pos_2)\pos_1 \\
		&= e^{2\pi i \mom_1\pos_2} \pos_1 \psi(\pos_1+\mom_2-\pos_2) \\
		&= (\pos_1 e^{2\pi i \mom_1\pos_2}+\pos_2e^{2\pi i \mom_1\pos_2})\psi(\pos_1+\mom_2-\pos_2) \\
		&=(\pos_1+\pos_2)\pto.
		\end{align*}
		\begin{align*}
		\pto (\mom_1+\mom_2)&= e^{2\pi i \mom_1\pos_2}\psi(\pos_1+\mom_2-\pos_2)(\mom_1+\mom_2) \\
		&=e^{2\pi i \mom_1+\pos_2}(\mom_1+\mom_2)\psi(\pos_1+\mom_2-\pos_2) \\
		&=\left\{\mom_1e^{2\pi i \mom_1\pos_2} + \mom_2 e^{2\pi i \mom_1\pos_2} -  \mom_1e^{2\pi i \mom_1\pos_2} \right\}\psi(\pos_1+\mom_2-\pos_2) \\
		&= \mom_2\pto,
		\end{align*}
		where the second equality is true since $[\pos_1+\mom_2-\pos_2 , \mom_1+\mom_2]=0.$
		\begin{align*}
		\pto (\mom_1+\pos_2) &=e^{2\pi i \mom_1\pos_2}\psi(\pos_1+\mom_2-\pos_2)(\mom_1+\pos_2) \\
		&= e^{2\pi i \mom_1\pos_2}(\mom_1+\pos_2)\psi(\pos_1+\mom_2-\pos_2) \\
		&=(\mom_1+\pos_2)\pto,
		\end{align*}
		where second equality is true since $[q_1+p_2-q_2,p_1+q_2]=0.$
		\begin{align*}
		\pto e^{2\pi \bq \mom_1} &= e^{2\pi i \mom_1\pos_2}\psi(\pos_1-\pos_2+\mom_2) e^{2\pi \bq \mom_1} \\
		&= \psi(\pos_1-\mom_1+\mom_2)e^{2\pi i \mom_1\pos_2} e^{2\pi \bq \mom_1} \\
		&= \psi(\pos_1-\mom_1+\mom_2)e^{2\pi \bq \mom_1}e^{2\pi i \mom_1\pos_2} \\
		&= e^{2\pi \bq \mom_1}\psi(\pos_1-\mom_1+\mom_2+i\bq)e^{2\pi i \mom_1\pos_2} \\
		&= e^{2\pi \bq \mom_1}\left(1+e^{2\pi \bq (\pos_1-\mom_1+\mom_2+\frac{i\bq}{2})}\right)\psi(\pos_1-\mom_1+\mom_2)e^{2\pi i \mom_1\pos_2} \\
		&= \left(e^{2\pi \bq \mom_1}+ e^{2\pi i \bq (\pos_1 +\mom_2)}\right)\pto,
		\end{align*}
		where in the last equality we use the \textit{Baker--Campbell--Hausdorff formula}.
	\end{proof}
\end{appendices}


\noindent 
J{\o}rgen Ellegaard Andersen and Jens-Jakob Kratmann Nissen \\
Center for Quantum Geometry of Moduli Spaces\\
Department of Mathematics\\
University of Aarhus\\
DK-8000, Denmark


\label{lastpage}

\end{document}